\newcommand{\tr}{\mathop\mathrm{tr}}
\newcommand{\rk}{\mathop\mathrm{rank}}
\newcommand{\spn}{\mathop\mathrm{span}}
\newcommand{\R}{\mathop\mathrm{Re}}
\newcommand{\I}{\mathop\mathrm{Im}}
\newtheorem{thm}{Theorem}[section]
\newtheorem{theorem}[thm]{Theorem}
\newtheorem{corollary}[thm]{Corollary}
\newtheorem{problem}[thm]{Problem}
\newtheorem{notation}[thm]{Notation}
\newtheorem{example}[thm]{Example}
\newtheorem{lemma}[thm]{Lemma}
\newtheorem{proposition}[thm]{Proposition}
\newtheorem{definition}[thm]{Definition}
\theoremstyle{remark}
\newtheorem{remark}[thm]{Remark}
\newtheorem{ex}[thm]{Example}
\newcommand{\ZZ}{\mathbb Z}
\newcommand{\RR}{\mathbb R}
\newcommand{\MM}{\mathbb M}
\newcommand{\NN}{\mathbb N}
\newcommand{\CC}{\mathbb C}
\newcommand{\HH}{\mathbb H}
\newcommand{\BB}{\mathbb B}
\newcommand{\pc}[1]{{\color{magenta}{#1}}}
\begin{document}

\title{The solution to the frame Quantum Detection Problem}
\author[Botelho-Andrade, Casazza, Cheng, Tran
 ]{Sara Botelho-Andrade, Peter G. Casazza, Desai Cheng and Tin T. Tran}
\address{Department of Mathematics, University
of Missouri, Columbia, MO 65211-4100}

\thanks{The authors were supported by
 NSF DMS 1609760; NSF ATD 1321779; and ARO  W911NF-16-1-0008}

\email{sandrade102087@gmail.com, Casazzap@missouri.edu}
\email{
chengdesai@yahoo.com, tinmizzou@gmail.com}

\subjclass{42C15, 46L10, 47A05}

\begin{abstract}
We will give a complete solution to the frame quantum detection problem.  We will solve both cases of the problem: the quantum injectivity problem
and quantum state estimation problem.  We will answer the problem in both
the real and complex cases and in both the finite dimensional and
infinite dimensional cases.

\noindent Finite Dimensional Case:

\begin{enumerate}
\item We give two complete classifications of the 
sets of vectors which solve the injectivity problem - for both the real and
complex cases.  We also give methods for constructing them.
\item We show that the frames which solve the injectivity problem
are open and dense in the family of all frames.
\item We show that the Parseval frames which give injectivity are dense in the Parseval frames.
\item We classify all frames for which the state estimation problem is solvable, and when it is not solvable, we give the best approximation to a
solution.
\end{enumerate}
\noindent Infinite Dimensional Case:
\begin{enumerate}
\item We give a classification of all frames which solve the injectivity problem
and give methods for constructing solutions.
\item We show that the frames solving the injectivity problem are neither open
nor dense in all frames.
\item We give necessary and sufficient conditions for a frame
to solve the state
estimation problem 
for all measurements in $\ell_1$ and show that there is no injective frame for which the state estimation problem is solvable
for all measurements in $\ell_2$.
\item When the state estimation problem does not have an exact solution, we give the best approximation to a
solution.
\end{enumerate}
\end{abstract}

\maketitle
\pagebreak
\tableofcontents

\pagebreak

\section{Introduction and Preliminaries}

In this paper we will give a complete answer to the frame quantum detection problem including the injectivity problem
and state estimation problem.  We will answer the problem in both
the real and complex cases and in both the finite dimensional and
infinite dimensional cases.
\vskip10pt
\noindent {\bf Important Notation}.  Throughout the paper we will
let $\{e_i\}_{i=1}^n$ be the canonical orthonormal basis of
$\RR^n$ or $\CC^n$ and $\{e_i\}_{i=1}^{\infty}$ will denote the
canonical orthonormal basis of real or complex $\ell_2$.  Also, $\iota$ will be used to denote the complex unit.
	
	For a vector $x_k$ in $\RR^n$ or $\CC^n$, we denote its coordinates as
	$$x_k=(x_{k1}, x_{k2}, \ldots, x_{kn}).$$
	Similarly, for $x_k$ belonging to $\ell_2$, we write
	$$x_k=(x_{k1}, x_{k2}, \ldots, x_{ki}, \ldots).$$
To explain exactly what we will solve, we need to introduce the basics of quantum detection.
Let $L^{\infty}(\HH)$ be the space of bounded linear operators on
a finite or infinite dimenional (real or complex) Hilbert space
$\HH$.  Let $\{e_i\}_{i\in I}$ be an orthonormal basis for $\HH$.
For an operator  $T\in L_0(\HH)$, the finite rank operators on $\HH$, the {\it trace} of $T$ is given by: $\tr(T)= \sum_{i\in I}\langle
Te_i,e_i\rangle$, which is finite and independent of the orthonormal
basis.  The 
trace induces a scalar product by $\langle T,S\rangle_{HS}=
\tr(TS^*)$.  The closure of $L_0(\HH)$ with respect to this scalar product, denoted $L^2(\HH)$ is the space of the Hilbert-Schmidt
operators on $\HH$.  For any $T\in L^{\infty}(\HH)$ we denote by
$|T|=\sqrt{TT^*}$, the positive square root of $TT^*$.  We say that 
$T$ is a {\it trace class operator} if $\tr(|T|)<\infty$.  The
set of all trace class operators is denoted by $L^1(\HH)$ and forms a Banach space under the 
{\it trace norm} $\|T\|_1=\tr(|T|)$. 

Let 
\[ Sym(\HH) = \{T:T\in L^{\infty}(\HH),\ T=T^*\},\]
denote the real Banach space of self-adjoint operators on $\HH$ and
let 
\[ Sym^+(\HH)= \{T =T^*\ge 0\},\]
denote the real cone of positive self-adjoint operators on $\HH$.  
The main objects to analyze these operators are the {\it positive
operator-valued measures}.

\subsection{Positive Operator-Valued Measures}

In quantum mechanics, the definition of a von Neumann measurement
can be generalized using positive operator-valued measures (POVMs)
\cite{Eld,Eld2,H}.  The advantage of this is that it allows one to distinguish
more accurately among elements of a set of non-orthogonal quantum
states.

Let $X$ denote a set of outcomes (e.g. this could be a finite or
infinite subset of $\ZZ^d$ or $\RR^d$).  Let $\beta$ denote a 
sigma algebra of subsets of $X$.

\begin{definition}
A {\bf positive operator-valued measure} ({\bf POVM}) is a
function $\Pi:\beta \rightarrow Sym^+(\HH)$ satisfying:
\begin{enumerate}
\item $\Pi(\emptyset)=0$ (the zero operator).
\item For every disjoint family $\{U_i\}_{i\in I}\subset \beta$,
$x,y \in \HH$ we have 
\[ \left \langle \Pi\left ( \cup_{i\in I}U_i\right )x,y \right 
\rangle= \sum_{i\in I}\langle \Pi(U_i)x,y\rangle.\]
\item $\Pi(X)= I$ (the identity operator).
\end{enumerate}
\end{definition}

\subsection{Quantum Systems}

A {\bf quantum system} is defined as a von Neumann algebra $\mathcal{A}$
of operators acting on $\HH$.  The set of {\bf states} on $\HH$
is 
\[ \mathcal{S}(\HH)=\{T\in L^1(\HH),\ T=T^*\ge 0,\ \tr(T)=1\},\]
and it represents the reservoir of {\bf quantum states} for any
quantum system.  

The set of {\bf quantum states} $\mathcal{S}(\mathcal{A})$ associated to a 
quantum system $\mathcal{A}$ is obtained by identifying states that differ by a null state with respect to $\mathcal{A}$.  Thus, the
set of quantum states are in one-to-one correspondance with the 
linear functionals on $\mathcal{A}$ of the form:
\[ \rho:\mathcal{A}\rightarrow \CC,\mbox{ for some }
S\in \mathcal{S}(\HH),\ \rho(T)=\tr(TS),\ \mbox{ for every }
T\in \mathcal{A}.\]

A quantum state $\rho \in \mathcal{S}(\mathcal{A})$ is called
a {\bf pure state} if it is an extreme point in the convex
$weak^*$ compact set of quantum states $\mathcal{S}(\mathcal{A})$.
We say a POVM $\Pi$ is {\it associated to a von Neumann algebra} $\mathcal{A}$
if $\Pi:\beta \rightarrow \mathcal{A}\cap Sym^+(\HH)$.  

Given a quantum state $\rho$, the {\bf quantum measurement} performed by the POVM $\Pi$ is the map $p: \beta \to \RR$ defined by $p(U)=\rho(\Pi(U))=\tr(\Pi(U)T)$, where $T\in \mathcal{S}(\HH)$ is in the equivalence class associated to $\rho$.

\subsection{The Quantum Detection Problem}

Let $L(\beta,\RR)$ denote the set of bounded functions
defined on $\beta$.  Given a POVM $\Pi$ associated to a von Neumann algebra $\mathcal{A}$, the {\bf quantum detection problem} is formulated as follows.
\vskip12pt
\noindent {\bf Quantum Detection Problem}.  Is there a unique quantum state $\rho \in  \mathcal{S}(\mathcal{A})$ compatible with the set
of quantum measurements performed by the POVM $\Pi$?

Specifically, the quantum detection problem asks two questions:
\begin{enumerate}
\item {\it Injectivity, or state separability}:  Is the following map injective
\[ \mathbb{M}: \mathcal{S}( \mathcal{A})\rightarrow L(\beta,\RR),\ \ \ 
\mathbb{M}(\rho)(U)=\rho(\Pi(U))?\]
\item {\it Range analysis, or state estimation}:  Assume $\mathbb{M}$ is injective.  Then, given a map $p \in L(\beta,\RR)$, determine if $p$ is in the range of $\mathbb{M}$, hence is of the form $p=\mathbb{M}(\rho)$ for some unique $\rho \in  \mathcal{S}( \mathcal{A})$.  If not, find a quantum state $\rho$ that best approximates $p$ in some sense (e.g. robustness to noise).
\end{enumerate}

We point out that in the context of quantum detection in
quantum mechanics, a significant amount of work has been put into
computing the probability of detection error 
\cite{Eld3,H,HW,P,Y}.  We will not
address this question here.

\subsection{Frame POVMs}

In this section we introduce the 
Hilbert space frame version of the quantum detection problem.
For a background on frame POVMs we recommend \cite{BK,Eld,HL,Coc}.
For a background on Hilbert space frame theory we recommend
\cite{CK,C1,Ch}.

\begin{definition}
A family of vectors $\{x_k\}_{k\in I}$ is a {\bf frame} for a 
real or complex, finite
or infinite dimensional Hilbert space $\HH$ if there
are constants $0<A\le B<\infty$ satisfying:
\[ A\|x\|^2 \le \sum_{k\in I}|\langle x, x_k\rangle|^2 \le 
B\|x\|^2,\mbox{ for all }x\in \HH.\]
\end{definition}

We have
\begin{enumerate}
\item $A,B$ are the {\bf lower and upper frame bounds} of the frame.
\item If $A=B$ this is a {\bf tight frame}.  If $A=B=1$ this is
a {\bf Parseval frame}.
\item If we only assume we have $0<B<\infty$, this is called
a {\bf B-Bessel sequence}. Note that $\|x_k\|^2 \le B$, for all
$k\in I$.
\end{enumerate}

We define the {\bf analysis operator} of the frame as
$T:\HH \rightarrow \ell_2(I)$ by 
\[ T(x)=(\langle x,x_1\rangle,\langle x,x_2\rangle,\ldots)
=\sum_{k\in I}\langle x,x_k\rangle e_k.\]
The {\bf synthesis operator} $T^*$ is given by:  
\[ T^*\left ( \{a_k\}_{k\in I}\right ) = \sum_{k\in I}a_kx_k.\]
The {\bf frame operator} is $S=T^*T$.  This is a positive,
self-adjoint invertible operator on $\HH$ satisfying:
\[ S(x)= \sum_{k\in I}\langle x,x_k\rangle x_k.\]
It is known that for any frame $\{x_k\}_{k\in I}$, $\{S^{-1/2}x_k\}_{k\in I}$ is a Parseval frame.  It is also known that a frame
is Parseval if and only if its frame operator is the identity
operator.

\begin{definition}
	A frame $\{x_k\}_{k\in I}$ is said to be {\bf bounded} if there is a constant $ C>0$ such that 
	\[ \|x_k\|\ge C, \mbox{ for all } k\in I.\]
\end{definition}

If $\{x_k\}_{k\in I}$ is a Parseval frame for a Hilbert space 
$\HH$, it naturally induces a POVM $\Pi$ on $X=I$ with
$\beta=2^{I}$ (the power set of $I$):
\[ \Pi(U) =\sum_{k\in U}x_kx_k^*,\mbox{ where }
x_k^*:\HH \rightarrow \CC,\ x_k^*(x)=\langle x,x_k\rangle,\]
with strong convergence for any $U\subset I$.

Given a state $T\in \mathcal{S}(\HH)$ (i.e. a unit-trace, trace class, positive, self-adjoint operator on $\HH$), the frame induced quantum measurement
is given by the function
\[ p:\beta\rightarrow \RR,\ \ p(U)=\sum_{k\in U}\tr(Tx_kx_k^*)
= \sum_{k\in U}\langle Tx_k,x_k\rangle.\]
 For the von Neumann algebra $\mathcal{A}=L^{\infty}(\HH)$, the quantum states coincide with the convex set of states $\mathcal{S}(\HH)$.  
 In this case, the injectivity problem and the state estimation problem ask:
\vskip12pt
\noindent {\bf Injectivity Problem}:
 Is there
 a Parseval frame $\chi=\{x_k\}_{k\in I}$ so that the map $\MM:\mathcal{S}(\HH)\rightarrow L(\beta,\RR)$ defined by $\MM(T)(U)=\sum_{k\in U}\langle Tx_k,x_k\rangle$ for $U\subset I$ is injective? 

\vskip10pt
 \noindent{\bf State Estimation Problem}: Given an injective Parseval frame  $\{x_k\}_{k\in I}$ and a function $p: \beta\to \RR$, is there any $T\in \mathcal{S}(\HH)$ so that $\MM(T)=p$? If not, find a quantum state $T$ that best approximates $p$.

\subsection{Generalizing Quantum Detection}

We will work on a much more general quantum detection problem.
In particular, we will work with 
\begin{enumerate}
\item Self-adjoint operators which
may not be positive.
\item Operators which are not trace one but are Hilbert Schmidt.
\item Frames which are not Parseval.
\end{enumerate}

We will see that solving the problem in this more general form
will also solve the original problem.

 First, we need a definition.
\begin{definition}
	A family of vectors $\mathcal{X}=\{x_k\}_{k\in I}$ in a Hilbert space $\HH$ is said to be {\bf injective} if whenever a Hilbert Schmidt self-adjoint operator $T$ satisfies 
	\[ \langle Tx_k, x_k\rangle=0,
	\mbox{  for all }k\in I,\] then $T=0$.
\end{definition}

Now we will show that we do not need to find Parseval frames for the quantum detection problem. If we have a frame giving injectivity, then its canonical Parseval frame is injective.

\begin{proposition}
	Let $\{x_k\}_{k\in I}$ be a frame for $\HH$ which gives injectivity. If $F$ is a bounded invertible operator 
	on $\HH$, then $\{Fx_k\}_{k\in I}$ also gives injectivity.
\end{proposition}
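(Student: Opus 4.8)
The plan is to reduce injectivity of $\{Fx_k\}_{k\in I}$ to injectivity of $\{x_k\}_{k\in I}$ by a simple change of variables inside the bilinear form. Suppose $T$ is a Hilbert--Schmidt self-adjoint operator on $\HH$ with $\langle T(Fx_k), Fx_k\rangle = 0$ for all $k\in I$. Moving $F$ across the inner product gives $\langle (F^*TF)x_k, x_k\rangle = 0$ for all $k\in I$, so I would set $S = F^*TF$ and aim to apply the injectivity hypothesis to $S$.

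To do that I must verify $S$ satisfies the two standing assumptions on the operator in the definition of an injective family. It is self-adjoint since $S^* = F^*T^*F = F^*TF = S$. It is Hilbert--Schmidt because the Hilbert--Schmidt class $L^2(\HH)$ is a two-sided ideal in $L^{\infty}(\HH)$; concretely $\|F^*TF\|_{HS} \le \|F^*\|\,\|T\|_{HS}\,\|F\| < \infty$, using that $F$ (hence $F^*$) is bounded. In the finite dimensional case this step is vacuous. Therefore the hypothesis that $\{x_k\}_{k\in I}$ is injective forces $S = 0$, i.e. $F^*TF = 0$.

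Finally, since $F$ is bounded and invertible, so is $F^*$, and thus $T = (F^*)^{-1}(F^*TF)F^{-1} = 0$. Hence $\{Fx_k\}_{k\in I}$ is injective, which is exactly the claim. I do not expect any genuine obstacle here: the only point that needs a word of justification is the stability of the Hilbert--Schmidt class under multiplication by bounded operators on either side, which is standard. It is also worth recording---though not strictly required by the statement as phrased---that $\{Fx_k\}_{k\in I}$ is again a frame whenever $F$ is bounded and invertible, so taking $F = S^{-1/2}$, the inverse square root of the frame operator, produces the canonical Parseval frame and thereby establishes the remark preceding the proposition: a frame giving injectivity has an injective canonical Parseval frame.
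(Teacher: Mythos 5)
Your proof is correct and follows exactly the paper's argument: pass $F$ across the inner product, apply the injectivity hypothesis to $F^*TF$, and invert. The only difference is that you explicitly justify the Hilbert--Schmidt and self-adjointness of $F^*TF$ (via the two-sided ideal property), which the paper simply asserts.
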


\begin{proof}
	Let $T$ be a Hilbert Schmidt self-adjoint operator such that $$\langle TFx_k, Fx_k\rangle=0, \mbox{ for all } k.$$ Then $\langle F^*TFx_k, x_k\rangle=0$, for all $k$. Note that $F^*TF$ is also a Hilbert Schmidt self-adjoint operator. Therefore, $F^*TF=0$ and hence $T=0$. 	
\end{proof}

\begin{corollary}\label{cor1}
	Let $\{x_{k}\}_{k\in I}$ be a frame with frame operator $S$. If $\{x_{k}\}_{k\in I}$ gives injectivity, then the canonical Parseval frame $\{S^{-1/2}x_k\}_{k\in I}$ also gives injectivity.
\end{corollary}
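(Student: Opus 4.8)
The plan is to derive this as an immediate consequence of the preceding Proposition. The only thing to check is that the frame operator $S$ of a frame produces, via $S^{-1/2}$, an operator to which that Proposition applies. First I would recall that the frame operator $S = T^*T$ of a frame $\{x_k\}_{k\in I}$ is a bounded, positive, self-adjoint, invertible operator on $\HH$ (this is stated in the excerpt just after the definition of the frame operator). Consequently $S^{-1/2}$ is well defined by the continuous functional calculus, and it is again a bounded, positive, self-adjoint, invertible operator on $\HH$; in particular it is a bounded invertible operator in the sense required by the Proposition.

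Then I would simply apply the Proposition with $F = S^{-1/2}$: since $\{x_k\}_{k\in I}$ gives injectivity and $F = S^{-1/2}$ is a bounded invertible operator on $\HH$, the Proposition yields that $\{S^{-1/2}x_k\}_{k\in I}$ also gives injectivity. Finally, I would note (again citing the remark in the excerpt that $\{S^{-1/2}x_k\}_{k\in I}$ is a Parseval frame for $\HH$) that this family is exactly the canonical Parseval frame associated to $\{x_k\}_{k\in I}$, so the claim is proved.

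There is essentially no obstacle here: the corollary is a one-line specialization of the Proposition, and the only mildly non-routine point is confirming that $S^{-1/2}$ inherits boundedness and invertibility from $S$, which is standard for the positive square root of a positive invertible operator. I would therefore keep the proof to two or three sentences, making sure the hypotheses of the Proposition are explicitly verified for $F = S^{-1/2}$ before invoking it.
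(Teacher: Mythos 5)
Your proposal is correct and is exactly the argument the paper intends: the corollary is stated immediately after the Proposition on bounded invertible operators precisely so that it follows by taking $F=S^{-1/2}$, which is bounded, self-adjoint, positive and invertible since $S$ is. Your extra care in verifying that $S^{-1/2}$ satisfies the hypotheses of the Proposition is the only content needed, and it matches the paper's (implicit) proof.
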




\section{The Solution for the Finite Dimensional Case}

In this section we will solve the finite dimensional injectivity problem and the state estimation problem for both
the real and complex cases.  These problems were originally
solved by Scott \cite{S} (See also \cite{BH}) where the solutions
are called {\bf informationally complete quantum measurements}.
We will have to redo this here since we need much more information
about the solutions and need proofs in a format that will easily
generalize to infinite dimensions.

\subsection{Solution to the Injectivity Problem}

 First, we will see that we do not need to work with positive operators via the following theorem.
\begin{theorem}\label{positivefinite}
	Given a family of vectors $\mathcal{X}=\{x_k\}_{k=1}^m$ in $\HH^n$, the following are equivalent:
	\begin{enumerate}
		\item Whenever $T, S$ are positive and self-adjoint, and
		\[ \langle Tx_k,x_k\rangle = \langle Sx_k,x_k\rangle,
		\mbox{ for all k},\]
		then $T=S$.
		\item Whenever $T, S$ are self-adjoint, and
		\[ \langle Tx_k,x_k\rangle = \langle Sx_k,x_k\rangle,
		\mbox{ for all k},\]
		then $T=S$.
		\item $\mathcal{X}$ is injective.
	\end{enumerate}
\end{theorem}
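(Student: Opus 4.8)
The plan is to prove the cycle of implications $(3)\Rightarrow(2)\Rightarrow(1)$ and $(1)\Rightarrow(3)$; only the last one requires any real work. The implication $(2)\Rightarrow(1)$ is immediate, since positive self-adjoint operators are in particular self-adjoint, so a hypothesis that holds for all self-adjoint operators certainly holds for positive ones. For $(3)\Rightarrow(2)$, suppose $T,S$ are self-adjoint with $\langle Tx_k,x_k\rangle=\langle Sx_k,x_k\rangle$ for all $k$; then $R=T-S$ is self-adjoint (and, being a finite-rank operator on $\HH^n$, automatically Hilbert--Schmidt) and satisfies $\langle Rx_k,x_k\rangle=0$ for all $k$, so injectivity of $\mathcal{X}$ forces $R=0$, i.e. $T=S$. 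Note that in finite dimensions every operator is Hilbert--Schmidt, so the ``Hilbert--Schmidt'' qualifier in the definition of injective is vacuous here; this is the one place where the finite-dimensional setting genuinely simplifies matters.

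The substantive direction is $(1)\Rightarrow(3)$. Assume (1) holds, and let $T$ be a self-adjoint operator with $\langle Tx_k,x_k\rangle=0$ for all $k$; I want to conclude $T=0$. The idea is to split $T$ into its positive and negative parts. Since $T=T^*$, we may write $T=T^+-T^-$ where $T^+,T^-\ge 0$ are the usual positive and negative parts obtained from the spectral decomposition (equivalently, diagonalize $T$ in an orthonormal eigenbasis and collect the nonnegative eigenvalues into $T^+$ and minus the negative ones into $T^-$). Then $\langle T^+x_k,x_k\rangle=\langle T^-x_k,x_k\rangle$ for every $k$, with $T^+,T^-$ positive and self-adjoint, so hypothesis (1) yields $T^+=T^-$. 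But $T^+$ and $T^-$ have orthogonal ranges by construction (they act on complementary spectral subspaces), so $T^+=T^-$ forces $T^+=T^-=0$, whence $T=0$. This shows $\mathcal{X}$ is injective.

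I do not anticipate a serious obstacle here: the only point that needs a moment's care is the orthogonality of the ranges of $T^+$ and $T^-$, which is what lets us pass from $T^+=T^-$ to both being zero. An alternative packaging that avoids even mentioning ranges is to note that $T^+=T^-$ together with $T^+T^-=0$ (again from the spectral decomposition) gives $(T^+)^2=T^+T^-=0$, and a positive operator with zero square is zero. Either way the argument is short, and the reason to record it explicitly, as the authors signal, is that the same splitting-into-positive-and-negative-parts device will be reused in the infinite-dimensional sections, where one must additionally check that $T^+$ and $T^-$ remain Hilbert--Schmidt (which they do, being dominated by $|T|$), so it is worth having the finite-dimensional template clean.
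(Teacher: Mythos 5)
Your proof is correct, but the load-bearing implication is handled by a genuinely different device than the paper's. The paper closes the cycle as $(1)\Rightarrow(2)\Rightarrow(3)\Rightarrow(1)$, and its hard step $(1)\Rightarrow(2)$ works by translation: it sets $m=\min\{\inf_{\|x\|=1}\langle Tx,x\rangle,\ \inf_{\|x\|=1}\langle Sx,x\rangle\}$ and replaces $T,S$ by the positive operators $T-mI$, $S-mI$, whose quadratic forms still agree on the $x_k$ because the correction term $m\|x_k\|^2$ is the same on both sides. You instead prove $(1)\Rightarrow(3)$ by the Jordan decomposition $T=T^+-T^-$ with $T^+T^-=0$, concluding $T^+=T^-$ from (1) and hence $(T^+)^2=T^+T^-=0$, so $T=0$. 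Both arguments are sound; what each buys is different. The paper's shift by $-mI$ is notationally lighter but is intrinsically finite-dimensional in the Hilbert--Schmidt setting (in infinite dimensions $T-mI$ leaves the Hilbert--Schmidt class since $I$ is not Hilbert--Schmidt), and indeed the paper flags exactly this and abandons the shift argument when it reproves the statement for infinite dimensions in Theorem 3.2, where it switches to a decomposition of $T-S$ as a difference of two positive Hilbert--Schmidt operators --- which is essentially your argument. So your route has the advantage of being the one that survives the passage to infinite dimensions unchanged (with the observation you make, that $T^\pm$ are dominated by $|T|$ and so stay Hilbert--Schmidt), at the cost of invoking the spectral theorem where the paper's finite-dimensional proof needs only boundedness below of the quadratic form. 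Your remaining implications $(3)\Rightarrow(2)$ and $(2)\Rightarrow(1)$ are the same trivialities as the paper's $(2)\Rightarrow(3)$ and $(3)\Rightarrow(1)$, just traversed in the opposite orientation.
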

\begin{proof}
	$(1)\Rightarrow (2)$: Let $T, S$ be self-adjoint operators such that 	\[ \langle Tx_k,x_k\rangle = \langle Sx_k,x_k\rangle,
	\mbox{ for all } k.\]
	Set $$m_1:=\inf_{\Vert x\Vert=1}\langle T x, x\rangle, \quad m_2:=\inf_{\Vert x\Vert=1}\langle S x, x\rangle$$ then $m_1, m_2\in \RR.$ Set $m=\min\{m_1, m_2\}$.
	
	Now let $P=T-mI$, $Q=S-mI$. Then for any $x\in \HH, \Vert x\Vert=1$, we have
	\begin{align*}
	\langle Px, x\rangle=\langle (T-mI)x, x\rangle=\langle Tx, x\rangle - m\geq 0.
	\end{align*}
	Hence, $P$ is positive. Similarly, $Q$ is positive.
	
	We have 
	\begin{align*}
	\langle Px_k, x_k\rangle&=\langle (T-mI)x_k, x_k\rangle\\
	&=\langle Tx_k, x_k\rangle - m\Vert x_k\Vert^2\\
	&=\langle Sx_k, x_k\rangle - m\Vert x_k\Vert^2\\
	&=\langle Qx_k, x_k\rangle.
	\end{align*}
	By (1) we get $P=Q$ and therefore $T=S$.
	
	$(2)\Rightarrow (3)$:  Let $T$ be any self-adjoint operator such that
	\[ \langle Tx_k,x_k\rangle =0,\mbox{ for 
		all } k.\]
	Then 
	\[ \langle Tx_k,x_k\rangle= \langle Sx_k,x_k\rangle,\mbox{ for 
		all } k,\] where $S=0$. It follows that $T=0$.
	
	$(3)\Rightarrow (1)$: Let any positive self-adjoint operators $T, S$ satisfy
	\[ \langle Tx_k,x_k\rangle=\langle Sx_k,x_k\rangle,\mbox{ for 
		all } k.\]
	Then
	\[ \langle (T-S)x_k,x_k\rangle =0,\mbox{ for 
		all } k.\]
	Since $T-S$ is a self-adjoint operator, $T=S$ by (3).
\end{proof}

\begin{remark}
	If we further require that the operators are trace one, then to
	prove injectivity, we only need to show that if $T$ is trace
	zero and $\langle Tx_k,x_k\rangle =0$ for all $k=1, 2, \ldots$, then
	$T=0$.  Since if $T,S$ are trace one and
	\[ \langle Tx_k,x_k\rangle =\langle Sx_k,x_k\rangle,
	\mbox{ for all }k.\]
	then
	\[ \langle (T-S)x_k,x_k\rangle =0,\mbox{ for all }k
	\mbox{ and }\tr(T-S)=0.\]
\end{remark}

\subsubsection{The real case}
We start with a propositon which
shows where our classification of the quantum
detection problem comes from.

\begin{proposition}\label{prop1}
	Given a self-adjoint operator $T=(a_{ij})_{i,j=1}^n$ on $\RR^n$ and
	a vector $x=(x_1,x_2,\ldots,x_n)\in \RR^n$, we have
	\[ \langle Tx,x\rangle  = \sum_{i=1}^n \sum_{j=1}^n a_{ij}x_ix_j=
	\sum_{i=1}^na_{ii}x_i^2 + 2\sum_{i=1}^n\sum_{j=i+1}^na_{ij}x_ix_j.
	\]
\end{proposition}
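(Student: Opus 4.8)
The plan is to expand $\langle Tx,x\rangle$ straight from the definitions and then reorganize the resulting double sum by separating the diagonal from the strictly-triangular parts. First I would compute the $i$-th coordinate of $Tx$, namely $(Tx)_i=\sum_{j=1}^n a_{ij}x_j$, and substitute it into the inner product:
\[
\langle Tx,x\rangle=\sum_{i=1}^n (Tx)_i\, x_i=\sum_{i=1}^n\sum_{j=1}^n a_{ij}x_i x_j .
\]
This already yields the first claimed equality, and note that it uses nothing about $T$ beyond its matrix representation.

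For the second equality I would partition the index set $\{(i,j):1\le i,j\le n\}$ into the diagonal $\{i=j\}$, the region $\{i<j\}$, and the region $\{i>j\}$. The diagonal contributes exactly $\sum_{i=1}^n a_{ii}x_i^2$. In the sum over $\{i>j\}$ I would relabel the indices by swapping $i$ and $j$ and then invoke $a_{ji}=a_{ij}$ — this is the single place where self-adjointness of $T$ is used — to conclude that this sum equals the sum over $\{i<j\}$. Hence the two off-diagonal pieces together give $2\sum_{i=1}^n\sum_{j=i+1}^n a_{ij}x_i x_j$, and adding back the diagonal term completes the proof.

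There is no substantive obstacle here; the statement is a bookkeeping identity. The only point requiring a little care is keeping the summation ranges aligned with the correct triangular region of the matrix and applying symmetry exactly once, when folding the lower-triangular part onto the upper-triangular part.
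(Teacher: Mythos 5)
Your proposal is correct and follows essentially the same route as the paper's proof: compute $(Tx)_i=\sum_{j=1}^n a_{ij}x_j$, substitute into the inner product to get the full double sum, then split off the diagonal and fold the lower-triangular region onto the upper-triangular one using $a_{ji}=a_{ij}$. No differences worth noting.
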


\begin{proof}
	First we compute:
	\[ Tx = \left (\sum_{j=1}^na_{1j}x_j,\sum_{j=1}^na_{2j}x_j,\ldots,
	\sum_{j=1}^n a_{nj}x_j\right ).\]
	So,
	\[ \langle Tx,x\rangle =  \sum_{j=1}^na_{1j}x_1x_{j} +
	\sum_{j=1}^na_{2j}x_2x_j+\cdots + \sum_{j=1}^n a_{nj}x_nx_j.
	\]
	Using the fact that $T$ is self-adjoint:
	\begin{align*}
	\langle Tx,x\rangle &= \sum_{i=1}^n \sum_{j=1}^n a_{ij}x_ix_j\\
	&= \sum_{i=1}^na_{ii}x_i^2 + \sum_{1\leq i<j\leq n}a_{ij}x_ix_j
	+ \sum_{1\leq j<i\leq n}a_{ij}x_ix_j\\
	&= \sum_{i=1}^na_{ii}x_i^2 + \sum_{1\leq i<j\leq n}a_{ij}x_ix_j
	+ \sum_{1\leq i<j\leq n}a_{ji}x_jx_i\\
	&= \sum_{i=1}^na_{ii}x_i^2 + 2\sum_{1\leq i<j\leq n}a_{ij}x_ix_j.
	\end{align*}
\end{proof}

This proposition leads us to the following definition:

\begin{definition}\label{def real finite}
To a vector $x=(x_1, x_2, \ldots, x_n)\in \RR^n$ we associate
	a vector $\tilde{x}$ in $\RR^{\frac{n(n+1)}{2}}$ by:
	$$\tilde{x}=(x_1x_1, x_1x_2, \ldots, x_1x_n; x_2x_2, x_2x_3, \ldots, x_2x_n;\ldots; x_{n-1}x_{n-1}, x_{n-1}x_n; x_nx_n).$$
To a self-adjoint operator $T=(a_{ij})_{i,j=1}^n$ on $\RR^n$, we associate
a vector $\tilde{T}$ in $\RR^{\frac{n(n+1)}{2}}$ by:
\[ \tilde{T}= (a_{11}, 2a_{12},\ldots, 2a_{1n}; a_{22}, 2a_{23}, \ldots, 2a_{2n}; \ldots; a_{(n-1)(n-1)}, 2a_{(n-1)n}; a_{nn}).\]
\end{definition}

Proposition \ref{prop1} now becomes:

\begin{corollary}\label{Coro2}
Given a self-adjoint operator $T=(a_{ij})_{i,j=1}^n$ on $\RR^n$ and
a vector $x=(x_1,x_2,\ldots,x_n)\in \RR^n$, we have
\[ \langle Tx,x\rangle = \langle \tilde{T},\tilde{x}\rangle.\]
\end{corollary}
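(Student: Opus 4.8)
The plan is to expand the right-hand side $\langle \tilde T, \tilde x\rangle$ coordinate by coordinate and recognize that it reproduces the expression for $\langle Tx, x\rangle$ already computed in Proposition \ref{prop1}. Both $\tilde T$ and $\tilde x$ live in $\RR^{\frac{n(n+1)}{2}}$ and are enumerated in the same ``block'' pattern dictated by Definition \ref{def real finite}: the $i$-th block (for $i=1,\ldots,n$) has length $n-i+1$ and runs through the index pairs $(i,i),(i,i+1),\ldots,(i,n)$.

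First I would observe that, under this common indexing, the coordinate of $\tilde x$ attached to the pair $(i,j)$ with $i\le j$ is $x_ix_j$, while the coordinate of $\tilde T$ attached to the same pair $(i,j)$ is $a_{ii}$ when $i=j$ and $2a_{ij}$ when $i<j$. Taking the standard inner product on $\RR^{\frac{n(n+1)}{2}}$ and summing the contributions block by block therefore gives
\[
\langle \tilde T, \tilde x\rangle = \sum_{i=1}^n a_{ii}x_ix_i + \sum_{1\le i<j\le n} 2a_{ij}x_ix_j = \sum_{i=1}^n a_{ii}x_i^2 + 2\sum_{1\le i<j\le n} a_{ij}x_ix_j.
\]
I would then invoke Proposition \ref{prop1}, which asserts precisely that $\langle Tx,x\rangle$ equals this last expression; comparing the two identities yields $\langle Tx,x\rangle = \langle \tilde T,\tilde x\rangle$, as claimed.

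The only point requiring attention — and it is bookkeeping rather than a genuine obstacle — is verifying that the two tuples in Definition \ref{def real finite} are listed in exactly the same order, so that the $k$-th entry of $\tilde T$ is genuinely paired with the $x_ix_j$ having the matching $(i,j)$. Once that alignment is confirmed, the corollary is immediate and no further computation is needed.
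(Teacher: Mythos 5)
Your proof is correct and is exactly the argument the paper intends: the corollary is presented as an immediate reformulation of Proposition \ref{prop1} once the coordinates of $\tilde T$ and $\tilde x$ from Definition \ref{def real finite} are paired index-by-index, which is precisely what you verify.
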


We are now able to give a classification of the frames
$\chi$ which give injectivity for the quantum detection problem.

\begin{theorem}\label{finite real}
	Let $\chi=\{x_k\}_{k=1}^m$ be a frame for $\RR^n$.  The following
	are equivalent:
	\begin{enumerate}
		\item $\chi$ gives injectivity.
		\item We have that $\{\tilde{x}_k\}_{k=1}^m$ spans $\mathcal{K}:=\RR^\frac{n(n+1)}{2}$.
	\end{enumerate}
\end{theorem}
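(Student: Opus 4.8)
The plan is to translate the injectivity condition into a statement about the spanning vectors $\tilde{x}_k$ via Corollary~\ref{Coro2}, using the fact (Theorem~\ref{positivefinite}) that injectivity is equivalent to: whenever a self-adjoint $T$ satisfies $\langle Tx_k,x_k\rangle=0$ for all $k$, then $T=0$. The key observation is that the map $T\mapsto \tilde{T}$ is a linear bijection from the real vector space $\mathrm{Sym}(\RR^n)$ of self-adjoint operators onto $\mathcal{K}=\RR^{n(n+1)/2}$; this is immediate from Definition~\ref{def real finite}, since the entries $a_{ij}$ for $i\le j$ determine and are determined by $\tilde{T}$ (the factors of $2$ are harmless nonzero scalars), and $\dim \mathrm{Sym}(\RR^n)=n(n+1)/2=\dim\mathcal{K}$.

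For the direction $(2)\Rightarrow(1)$: suppose $\{\tilde{x}_k\}$ spans $\mathcal{K}$, and let $T$ be self-adjoint with $\langle Tx_k,x_k\rangle=0$ for all $k$. By Corollary~\ref{Coro2}, $\langle \tilde{T},\tilde{x}_k\rangle=0$ for all $k$, so $\tilde{T}$ is orthogonal to $\spn\{\tilde{x}_k\}_{k=1}^m=\mathcal{K}$, forcing $\tilde{T}=0$, hence $T=0$ since $T\mapsto\tilde{T}$ is injective. Thus $\chi$ gives injectivity. For $(1)\Rightarrow(2)$: suppose $\{\tilde{x}_k\}$ does not span $\mathcal{K}$. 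Then there is a nonzero vector $v\in\mathcal{K}$ orthogonal to every $\tilde{x}_k$. Since $T\mapsto\tilde{T}$ is surjective, write $v=\tilde{T}$ for some self-adjoint $T\ne 0$. By Corollary~\ref{Coro2}, $\langle Tx_k,x_k\rangle=\langle\tilde{T},\tilde{x}_k\rangle=\langle v,\tilde{x}_k\rangle=0$ for all $k$, so $\chi$ does not give injectivity.

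I do not expect a serious obstacle here: the entire content is the observation that $T\mapsto\tilde{T}$ is a linear isomorphism intertwining the bilinear form $(T,x)\mapsto\langle Tx,x\rangle$ with the standard inner product on $\mathcal{K}$, after which both implications are one-line orthogonal-complement arguments. The only point that deserves a word of care is verifying that $T\mapsto\tilde{T}$ is genuinely a bijection of real vector spaces — in particular that one does not lose information through the doubling of off-diagonal entries — but this is routine from the explicit formula in Definition~\ref{def real finite}. One might also remark that the frame hypothesis on $\chi$ is not actually needed for this equivalence; it is carried along only because frames are the objects of interest in the quantum detection problem.
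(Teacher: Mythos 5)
Your proof is correct and follows essentially the same route as the paper: both directions rest on the identity $\langle Tx,x\rangle=\langle \tilde{T},\tilde{x}\rangle$ from Corollary~\ref{Coro2} together with the observation that $T\mapsto\tilde{T}$ is a linear bijection onto $\mathcal{K}$ (the paper realizes the inverse explicitly by setting $b_{ij}=b_{ji}=\tfrac{1}{2}a_{ij}$ off the diagonal, which is exactly your surjectivity step). Your phrasing of $(1)\Rightarrow(2)$ as a contrapositive rather than the paper's direct orthogonal-complement argument is only a cosmetic difference, and your closing remark that the frame hypothesis is not needed is also accurate.
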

\begin{proof}
	$(1)\Rightarrow (2)$: Let a vector
	$$a=(a_{11}, a_{12},\ldots, a_{1n}; a_{22}, a_{23}, \ldots, a_{2n}; \ldots; a_{(n-1)(n-1)}, a_{(n-1)n}; a_{nn})\in \mathcal{K} $$ be such that $\langle a, \tilde{x}_k\rangle=0$ for all $k$.
	
	Define an operator $T=(b_{ij})_{i,j=1}^n$ on $\RR^n$, where 
	$b_{ii}=a_{ii}$ for $i=1, 2, \ldots, n$ and $b_{ij}=b_{ji}=\dfrac{1}{2}a_{ij}$ for $i<j$. Then $T$ is a self-adjoint operator.
	
	For any $x=(x_1, x_2, \ldots, x_n)\in \RR^n$ we have
	
	\begin{align*}
	\langle Tx,x\rangle&= \sum_{i=1}^{n}\sum_{j=1}^{n}b_{ij}x_ix_j\\
	&=\sum_{i=1}^{n}b_{ii}x_i^2+2\sum_{1\leq i<j\leq n}b_{ij}x_ix_j\\
	&=\sum_{i=1}^{n}a_{ii}x_i^2+\sum_{1\leq i<j\leq n}a_{ij}x_ix_j\\
	&=\langle a, \tilde{x}\rangle.
	\end{align*}
	Therefore, $\langle Tx_k,x_k\rangle=\langle a, \tilde{x}_k\rangle=0$ for all $k$. This implies $T=0$  and hence $a=0$.
	
	$(2)\Rightarrow (1)$: Let $T=(a_{ij})_{i,j=1}^n$ be a self-adjoint operator such that
	$$\langle Tx_k, x_k\rangle=0, \mbox{ for all } k.$$
	Then by Corollary \ref{Coro2},
	$$\langle \tilde{T}, \tilde{x}_k\rangle=\langle Tx_k, x_k\rangle=0, \mbox{ for all } k.$$
	Since $\{\tilde{x}_k\}_{k=1}^m$ spans $\mathcal{K}$, we
	have that $\tilde{T}=0$ and so $T=0$.	
\end{proof}

The theorem gives a lower limit on the number of vectors needed
to achieve injectivity.
\begin{corollary}
	If a frame $\mathcal{X}=\{x_k\}_{k=1}^m$ gives injectivity in $\RR^n$, then $m\geq\dfrac{n(n+1)}{2}$.
\end{corollary}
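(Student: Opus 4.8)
The plan is to derive this immediately from Theorem \ref{finite real}. Suppose $\mathcal{X}=\{x_k\}_{k=1}^m$ gives injectivity in $\RR^n$. By the implication $(1)\Rightarrow(2)$ of Theorem \ref{finite real}, the associated family $\{\tilde{x}_k\}_{k=1}^m$ spans $\mathcal{K}=\RR^{\frac{n(n+1)}{2}}$.

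Next I would invoke the elementary linear-algebra fact that any spanning set of a finite-dimensional vector space has cardinality at least the dimension of that space: if $\{\tilde{x}_k\}_{k=1}^m$ spans a space of dimension $\frac{n(n+1)}{2}$, then $m\ge \frac{n(n+1)}{2}$. This finishes the argument.

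There is no real obstacle here; the only thing to be careful about is making explicit that $\dim \mathcal{K}=\frac{n(n+1)}{2}$ (which is clear from Definition \ref{def real finite}, since $\tilde{x}$ lives in $\RR^{\frac{n(n+1)}{2}}$), so that the counting bound applies. One could optionally remark that this lower bound is consistent with the dimension count for the real symmetric matrices $Sym(\RR^n)$, which also has dimension $\frac{n(n+1)}{2}$, explaining why no smaller family of vectors can separate all self-adjoint operators.
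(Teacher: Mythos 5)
Your proof is correct and is exactly the argument the paper intends: the corollary is stated immediately after Theorem \ref{finite real} as a direct consequence, obtained by combining the implication $(1)\Rightarrow(2)$ with the fact that a spanning set of $\RR^{\frac{n(n+1)}{2}}$ must contain at least $\frac{n(n+1)}{2}$ vectors. Nothing further is needed.
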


As a consequence (See the related
\cite{CPT}):

\begin{corollary}
Given a frame $\{x_k\}_{k=1}^m$ for $\RR^n$, the following are
equivalent:
\begin{enumerate}
\item The family $\{x_kx_k^*\}_{k=1}^m$ spans the class of self-adjoint operators.
\item The family of vectors $\{\tilde{x}_k\}_{k=1}^m$ spans
$\RR^{\frac{n(n+1)}{2}}$.
\end{enumerate}
\end{corollary}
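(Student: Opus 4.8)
The plan is to reduce this corollary directly to Theorem~\ref{finite real} and Corollary~\ref{Coro2} by exhibiting a linear isomorphism between the span of $\{x_kx_k^*\}$ and the span of $\{\tilde x_k\}$. The key observation is that the map $T\mapsto \tilde T$ of Definition~\ref{def real finite}, restricted to the real vector space $Sym(\RR^n)$ of self-adjoint operators, is a linear bijection onto $\RR^{n(n+1)/2}$: it is visibly linear in the entries $a_{ij}$, it is injective since $\tilde T=0$ forces every $a_{ij}=0$, and both spaces have dimension $n(n+1)/2$. So it suffices to show that this isomorphism carries $x_kx_k^*$ to $\tilde x_k$ (up to nothing — exactly), after which spanning on one side transfers to spanning on the other.

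For the main computation I would verify that $\widetilde{x_kx_k^*}=\tilde x_k$. Writing $x=x_k$ with coordinates $(x_1,\dots,x_n)$, the operator $xx^*$ has $(i,j)$ entry $x_ix_j$; hence its diagonal entries are $x_i^2$ and its off-diagonal entries are $x_ix_j$. Applying Definition~\ref{def real finite} to $T=xx^*$ gives
\[
\widetilde{xx^*}=(x_1^2,\,2x_1x_2,\dots,2x_1x_n;\;x_2^2,\,2x_2x_3,\dots;\;\dots;\;x_n^2).
\]
This is \emph{not} literally equal to $\tilde x$ as defined (which has $x_ix_j$, not $2x_ix_j$, in the off-diagonal slots), so the clean statement is instead that the linear isomorphism $\Phi:Sym(\RR^n)\to\RR^{n(n+1)/2}$ sending $T$ to the \emph{unweighted} coordinate list $(a_{11},a_{12},\dots,a_{1n};a_{22},\dots;a_{nn})$ sends $x_kx_k^*$ to exactly $\tilde x_k$. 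Since $\Phi$ is also a linear bijection on $Sym(\RR^n)$ (same dimension count, same injectivity argument), it carries a spanning set of $\{x_kx_k^*\}$ in $Sym(\RR^n)$ to a spanning set $\{\tilde x_k\}$ of $\RR^{n(n+1)/2}$ and conversely. That is precisely the equivalence of (1) and (2).

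So the proof runs: (i) define $\Phi:Sym(\RR^n)\to\RR^{n(n+1)/2}$ by reading off the on-and-above-diagonal entries; (ii) note $\Phi$ is a linear isomorphism; (iii) compute $\Phi(x_kx_k^*)=\tilde x_k$ directly from the entrywise formula $(x_kx_k^*)_{ij}=x_{ki}x_{kj}$; (iv) conclude that $\{x_kx_k^*\}_{k=1}^m$ spans $Sym(\RR^n)$ if and only if $\{\Phi(x_kx_k^*)\}_{k=1}^m=\{\tilde x_k\}_{k=1}^m$ spans $\RR^{n(n+1)/2}$, since isomorphisms preserve spanning. The only subtlety — and the thing to be careful about rather than a real obstacle — is the factor-of-two bookkeeping: one must use the unweighted identification $\Phi$ here (equivalently, post-compose the $T\mapsto\tilde T$ map with the invertible diagonal scaling that halves the off-diagonal coordinates), so that $x_kx_k^*$ lands on $\tilde x_k$ on the nose; any invertible diagonal rescaling preserves both spanning properties in any case, so the stated equivalence holds regardless of which convention one picks.
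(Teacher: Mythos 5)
Your proof is correct, but it takes a genuinely different route from the paper's. The paper disposes of this corollary in one line via the Hilbert--Schmidt duality identity $\langle T, x x^*\rangle_{HS}=\tr(Txx^*)=\langle Tx,x\rangle$: a family spans $Sym(\RR^n)$ if and only if its orthogonal complement there is trivial, and the identity converts ``$T\perp x_kx_k^*$ for all $k$'' into ``$\langle Tx_k,x_k\rangle=0$ for all $k$,'' so (1) is equivalent to injectivity of $\mathcal{X}$, which Theorem~\ref{finite real} has already shown to be equivalent to (2). You instead build an explicit linear isomorphism $\Phi:Sym(\RR^n)\to\RR^{n(n+1)/2}$ by reading off the on-and-above-diagonal entries, verify $\Phi(x_kx_k^*)=\tilde x_k$ on the nose, and transport spanning across $\Phi$; this makes your argument self-contained (it does not actually need Theorem~\ref{finite real}, despite your opening sentence), at the cost of being longer and requiring the careful factor-of-two bookkeeping you correctly flag --- the map $T\mapsto\tilde T$ of Definition~\ref{def real finite} doubles off-diagonal entries, so $\widetilde{x_kx_k^*}\neq\tilde x_k$, and you rightly switch to the unweighted identification (or note that any invertible diagonal rescaling preserves spanning). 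The paper's duality argument buys brevity and reuses the injectivity machinery; yours buys an explicit coordinate-level explanation of why the two spanning conditions are literally the same statement in two coordinate systems.
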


\begin{proof}
	This is immediate since for every $x\in \RR^n$ and self-adjoint operator $T$, we have
	\[\langle T, xx^*\rangle=\tr(Txx^*)=\langle Tx, x\rangle.\]
\end{proof}

\begin{remark}
For any of the frames $\{x_k\}_{k=1}^m$ giving injectivity, if
$S$ is the frame operator, then $\{S^{-1/2}x_k\}_{k=1}^m$ is a 
Parseval frame giving injectivity by Corollary \ref{cor1}.
\end{remark}

	Normally in the frame quantum detection problem, there is the
	added assumption that the trace of the operators is one.
We will now see that with this assumption, we can eliminate one measurement. We start with a simple example.

\begin{example}
	Let $\mathcal{X}=\{(1,0), (1,1)\}$ in $\RR^2$. Then $\mathcal{X}$ gives injectivity in $\RR^2$ for all self-adjoint operators of trace one.
\end{example}	
Indeed, let 
\[ T=\begin{bmatrix}
a&b\\
b&c
\end{bmatrix}
\] be a self-adjoint matrix of trace zero such that 
$$\langle T(1,0),(1,0)\rangle  = \langle T(1,1),
(1,1)\rangle=0.$$
Then
\[ a= \langle T(1,0),(1,0)\rangle = 0.\] Since $a+c=0$ and 
\[ \langle T(1,1),(1,1)\rangle = \langle (a+b,b+c),
(1,1)\rangle = a+2b+c,\]
then $b=c=0$. So $T=0$.	
	
For the classification of all frames which give injectivity with this added assumption, we will need:

	\begin{definition}
		Let $x=(x_1, x_2, \ldots, x_n)\in \RR^n$. Define
		$$\tilde{x}=(x_1x_2, \ldots, x_1x_n; x_2^2-x_1^2, x_2x_3, \ldots, x_2x_n;\ldots; x_{n-1}^2-x_1^2, x_{n-1}x_n; x_n^2-x_1^2).$$
	\end{definition}
	Now we can prove the trace one version of our classification.

\begin{theorem}
	Let $\mathcal{X}=\{x_k\}_{k=1}^m$ be a frame for $\RR^n$.  The following
	are equivalent:
	\begin{enumerate}
		\item $\mathcal{X}$ gives injectivity for all self-adjoint
		operators of trace one.
		\item We have that $\{\tilde{x}_k\}_{k=1}^m$ spans $\mathcal{K}:=\RR^{\frac{n(n+1)}{2}-1}$.
	\end{enumerate}
\end{theorem}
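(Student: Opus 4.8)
The plan is to follow the proof of Theorem~\ref{finite real} almost verbatim, after first reducing to trace-zero operators. The reduction is this: a frame $\mathcal{X}=\{x_k\}_{k=1}^m$ gives injectivity for all self-adjoint operators of trace one \emph{if and only if} every trace-zero self-adjoint operator $T$ with $\langle Tx_k,x_k\rangle=0$ for all $k$ is the zero operator. The ``if'' half is exactly the Remark following Theorem~\ref{finite real}; the ``only if'' half follows by applying injectivity to the pair $\tfrac1n I$ and $\tfrac1n I+T$, which are both trace-one, self-adjoint, and induce the same measurements. So from here on I work only with trace-zero self-adjoint operators.

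The computational heart is a trace-zero analogue of Corollary~\ref{Coro2}. If $T=(a_{ij})_{i,j=1}^n$ is self-adjoint with $\tr(T)=0$, then $a_{11}=-\sum_{i=2}^n a_{ii}$, so Proposition~\ref{prop1} rewrites as
\[ \langle Tx,x\rangle=\sum_{i=1}^n a_{ii}x_i^2+2\sum_{1\le i<j\le n}a_{ij}x_ix_j=\sum_{i=2}^n a_{ii}(x_i^2-x_1^2)+2\sum_{1\le i<j\le n}a_{ij}x_ix_j . \]
Reading off the coefficients against the coordinates of $\tilde{x}$ ordered as in the definition preceding the statement, this is precisely $\langle Tx,x\rangle=\langle\tilde{T},\tilde{x}\rangle$, where I set
\[ \tilde{T}=(2a_{12},\ldots,2a_{1n};\,a_{22},2a_{23},\ldots,2a_{2n};\,\ldots;\,a_{(n-1)(n-1)},2a_{(n-1)n};\,a_{nn})\in\mathcal{K}. \]
Moreover $T\mapsto\tilde{T}$ is a linear bijection from the $\bigl(\tfrac{n(n+1)}2-1\bigr)$-dimensional space of trace-zero self-adjoint operators onto $\mathcal{K}$: from $\tilde{T}$ one recovers $a_{ij}$ for $i<j$ and $a_{ii}$ for $i\ge 2$, and $a_{11}$ is then forced by $\tr(T)=0$. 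Equivalently, to a vector $a=(a_{12},\ldots,a_{1n};a_{22},\ldots,a_{2n};\ldots;a_{nn})\in\mathcal{K}$ I associate $T_a=(b_{ij})$ with $b_{ij}=b_{ji}=\tfrac12 a_{ij}$ for $i<j$, $b_{ii}=a_{ii}$ for $i\ge 2$, and $b_{11}=-\sum_{i=2}^n a_{ii}$; this $T_a$ is trace-zero self-adjoint and satisfies $\langle T_a x,x\rangle=\langle a,\tilde{x}\rangle$ for every $x\in\RR^n$.

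With these facts the equivalence is routine. For $(1)\Rightarrow(2)$: if $a\in\mathcal{K}$ has $\langle a,\tilde{x}_k\rangle=0$ for all $k$, then $\langle T_a x_k,x_k\rangle=0$ for all $k$, so by the reduction $T_a=0$ and hence $a=0$; thus $\{\tilde{x}_k\}_{k=1}^m$ spans $\mathcal{K}$. For $(2)\Rightarrow(1)$: if $T$ is trace-zero self-adjoint with $\langle Tx_k,x_k\rangle=0$ for all $k$, then $\langle\tilde{T},\tilde{x}_k\rangle=0$ for all $k$; since $\{\tilde{x}_k\}_{k=1}^m$ spans $\mathcal{K}$ we get $\tilde{T}=0$, hence $T=0$, and by the reduction $\mathcal{X}$ gives injectivity for all trace-one self-adjoint operators. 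There is no genuine obstacle here: everything collapses to the substitution $a_{11}=-\sum_{i\ge 2}a_{ii}$ and to index bookkeeping matching the orderings of $\tilde{x}$ and $\tilde{T}$. The one point worth double-checking is that eliminating $a_{11}$ is exactly what turns the $\tfrac{n(n+1)}2$-dimensional picture of Theorem~\ref{finite real} into the $\bigl(\tfrac{n(n+1)}2-1\bigr)$-dimensional one here, i.e.\ that $T\mapsto\tilde{T}$ really is onto $\mathcal{K}$.
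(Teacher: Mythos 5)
Your proposal is correct and follows essentially the same route as the paper: reduce to trace-zero self-adjoint operators, build the operator $T_a$ with $b_{11}=-\sum_{i\ge 2}a_{ii}$ from a vector $a\in\mathcal{K}$ orthogonal to all $\tilde{x}_k$, and use the identity $\langle Tx,x\rangle=\langle\tilde{T},\tilde{x}\rangle$ in both directions. The only (harmless) difference is that you justify the reduction explicitly via the pair $\tfrac1n I$ and $\tfrac1n I+T$, where the paper simply asserts the equivalence.
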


\begin{proof}
	Note that we are trying to show that when two  positive, self-adjoint operators 
	$T, S$ of trace one satisfy
	\[ \langle Tx_k,x_k\rangle = \langle Sx_k,x_k\rangle,
	\mbox{ for all }k=1,2,\ldots,m,\]
	then $T=S$.  This is clearly equivalent to showing that
	if $T$ is a self-adjoint operator of trace zero and
	$\langle Tx_k,x_k\rangle =0$ for all $k=1,2,\ldots,$ then
	$T=0$.

	\vskip10pt	
	$(1)\Rightarrow (2)$: Let a vector
	$$a=(a_{12}, \ldots, a_{1n}; a_{22}, \ldots, a_{2n}; \ldots; a_{(n-1)(n-1)}, a_{(n-1)n}; a_{nn})\in \mathcal{K}$$ be such that $\langle a, \tilde{x}_k\rangle=0$ for all $k$.
	
	Define an operator $T=(b_{ij})_{i,j=1}^n$, where 
	$b_{11}=-\sum_{i=2}^na_{ii}, b_{ii}=a_{ii}$ for $i=2, 3, \ldots, n$ and $b_{ij}=b_{ji}=\dfrac{1}{2}a_{ij}$ for $i<j$. Then $T$ is self-adjoint and $\tr(T)=0$.
	
	For any $x=(x_1, x_2, \ldots, x_n)\in \RR^n$ we have
	\begin{align*}
	\langle Tx,x\rangle&= \sum_{i=1}^{n}\sum_{j=1}^{n}b_{ij}x_ix_j\\
	&=\sum_{i=1}^{n}b_{ii}x_i^2+2\sum_{1\leq i<j\leq n}b_{ij}x_ix_j\\
	&=\left(-\sum_{i=2}^{n}a_{ii}\right)x_1^2+\sum_{i=2}^{n}a_{ii}x_i^2+\sum_{1\leq i<j\leq n}a_{ij}x_ix_j\\
	&=\langle a, \tilde{x}\rangle.
	\end{align*}
	Therefore, $\langle Tx_k,x_k\rangle=\langle a, \tilde{x}_k\rangle=0$ for all $k$. This implies $T=0$ and hence $a=0$.
	
	$(2)\Rightarrow (1)$: Let $T=(a_{ij})_{i,j=1}^n$ be a self-adjoint operator with $\tr(T)=0$ and such that
	$\langle Tx_k, x_k\rangle=0$ for all $k$. Then $a_{11}=-\sum_{i=2}^{n}a_{ii}.$
	
	Define 
	$$\tilde{T}=(2a_{12}, 2a_{13}, \ldots, 2a_{1n}; a_{22}, 2a_{23}, \ldots, 2a_{2n}; \ldots; a_{(n-1)(n-1)}, 2a_{(n-1)n}; a_{nn}).$$
	Then $\tilde{T}\in \mathcal{K}$ and
	$$\langle \tilde{T}, \tilde{x}_k\rangle=\langle Tx_k, x_k\rangle=0, \mbox{ for all } k.$$
	Since $\{\tilde{x}_k\}_{k=1}^m$ spans $\mathcal{K}$, then $\tilde{T}=0$. Hence $T=0$.	
\end{proof}

\subsubsection{The complex case}
We need to adjust some of definitions for the real case so they
will apply to the complex case.

\begin{definition}\label{defn finite complex}
	Given $x=(x_1,x_2,\ldots,x_n) \in \CC^n$, define
	\begin{align*}
	\tilde{x} = (|x_1|^2,\R(\bar{x}_1x_2),& \I(\bar{x}_1x_2),\ldots,\R(\bar{x}_1x_n),\I(\bar{x}_1x_n);\\ &|x_2|^2,\R(\bar{x}_2x_3),\I(\bar{x}_2x_3),
	\ldots,\R(\bar{x}_2x_n),\I(\bar{x}_2x_n);\ldots;\\
	& |x|^2_{n-1},\R(\bar{x}_{n-1}x_n), \I(\bar{x}_{n-1}x_n); |x_n|^2)\in \RR^{n^2}.
	\end{align*}
\end{definition}

Now we can give our classification theorem for injectivity in
the quantum detection problem for the complex case.

\begin{theorem}
	Let $\mathcal{X}=\{x_k\}_{k=1}^m$ be a frame for $\CC^n$.  The following
	are equivalent:
	\begin{enumerate}
		\item $\mathcal{X}$ gives injectivity.
		\item We have that $\{\tilde{x}_k\}_{k=1}^m$ spans $\RR^{n^2}$.
	\end{enumerate}
\end{theorem}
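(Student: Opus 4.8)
The plan is to follow the proof of Theorem~\ref{finite real} verbatim, replacing Corollary~\ref{Coro2} by its complex counterpart. The first step is the key algebraic identity: given a self-adjoint $T=(a_{ij})_{i,j=1}^n$ on $\CC^n$ (so $a_{ii}\in\RR$ and $a_{ji}=\overline{a_{ij}}$), associate to it the vector
\[
\tilde T=(a_{11},2\R(a_{12}),-2\I(a_{12}),\ldots,2\R(a_{1n}),-2\I(a_{1n});\, a_{22},\ldots;\ldots;\, a_{(n-1)(n-1)},2\R(a_{(n-1)n}),-2\I(a_{(n-1)n});\, a_{nn})\in\RR^{n^2},
\]
laid out in the same block pattern as $\tilde x$ in Definition~\ref{defn finite complex}. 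Expanding $\langle Tx,x\rangle=\sum_{i,j}a_{ij}x_j\overline{x_i}$, splitting off the diagonal, and pairing the $(i,j)$ and $(j,i)$ terms via $a_{ji}=\overline{a_{ij}}$ gives $\langle Tx,x\rangle=\sum_i a_{ii}|x_i|^2+\sum_{i<j}2\R(a_{ij}\overline{x_i}x_j)$; then writing $a_{ij}=\R(a_{ij})+\iota\I(a_{ij})$ and expanding the real part produces exactly $\langle\tilde T,\tilde x\rangle$. This is the complex analog of Proposition~\ref{prop1} and Corollary~\ref{Coro2}; the only subtlety is keeping the signs straight, which is the source of the $-2\I(a_{ij})$ coefficients. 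I would also record that $T\mapsto\tilde T$ is a real-linear bijection from $Sym(\CC^n)$ onto $\RR^{n^2}$ — this explains why $\RR^{n^2}$ is the correct target, since $Sym(\CC^n)$ has real dimension $n^2$ — with inverse sending $a\in\RR^{n^2}$ with blocks $(\alpha_i;\beta_{ij},\gamma_{ij})$ to the self-adjoint $T=(b_{ij})$ given by $b_{ii}=\alpha_i$ and $b_{ij}=\frac{1}{2}(\beta_{ij}-\iota\gamma_{ij})$ for $i<j$.

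For $(1)\Rightarrow(2)$, suppose $\mathcal X$ gives injectivity and let $a\in\RR^{n^2}$ satisfy $\langle a,\tilde x_k\rangle=0$ for all $k$. Using the explicit inverse above, build the self-adjoint operator $T$ with $\tilde T=a$; then $\langle Tx_k,x_k\rangle=\langle\tilde T,\tilde x_k\rangle=\langle a,\tilde x_k\rangle=0$ for every $k$, so $T=0$ by injectivity, hence $a=\tilde T=0$. Therefore $\{\tilde x_k\}_{k=1}^m$ spans $\RR^{n^2}$. For $(2)\Rightarrow(1)$, let $T=(a_{ij})$ be self-adjoint with $\langle Tx_k,x_k\rangle=0$ for all $k$; by the identity, $\langle\tilde T,\tilde x_k\rangle=0$ for all $k$, and since $\{\tilde x_k\}$ spans $\RR^{n^2}$ we get $\tilde T=0$, i.e.\ all $a_{ij}=0$, so $T=0$ and $\mathcal X$ is injective.

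The only genuine work is the bookkeeping in the first paragraph — the complex expansion of $\langle Tx,x\rangle$ and matching it slot-by-slot against Definition~\ref{defn finite complex} — and it is the step most prone to sign errors; everything after that is a transcription of the real-case argument. The frame hypothesis is not essential: the proof goes through for any finite family $\{x_k\}_{k=1}^m$, exactly as in the real-case corollary.
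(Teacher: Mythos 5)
Your proposal is correct and follows essentially the same route as the paper: both rest on the identity $\langle Tx,x\rangle=\langle \tilde T,\tilde x\rangle$ with $\tilde T=(a_{11},2\R(a_{12}),-2\I(a_{12}),\ldots)$, and both directions (building $T$ from an orthogonal vector $a$ via $b_{ij}=\frac12(u_{ij}-\iota v_{ij})$, and passing from $T$ to $\tilde T$) match the paper's argument step for step. Your added observation that $T\mapsto\tilde T$ is a real-linear bijection onto $\RR^{n^2}$ is a nice way to explain the dimension count, but it is a remark, not a different proof.
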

\begin{proof}
	$(1)\Rightarrow (2)$: 
	Let a be any vector 
	\begin{align*}
	a= (a_{11},u_{12},v_{12},\ldots,u_{1n},v_{1n}; &a_{22},u_{23},v_{23},
	\ldots,u_{2n},v_{2n};\ldots;\\
	&a_{(n-1)(n-1)},u_{(n-1)n},v_{(n-1)n}; a_{nn})\in \RR^{n^2}
	\end{align*}
	such that $\langle a,\tilde{x}_k\rangle =0$ for all $k$.
	
	Define an operator $T=(b_{ij})_{i,j=1}^n$ with 
	$b_{ii}=a_{ii}$ for $i=1, 2, \ldots, n$ and $b_{ij}=\bar{b}_{ji}=\dfrac{1}{2}(u_{ij}-\iota v_{ij})$ for $i<j$. Then $T$ is a self-adjoint operator.
	
	For any $x=(x_1, x_2, \ldots, x_n)\in \CC^n$ we have
	\begin{align*}
	\langle Tx,x\rangle &= \sum_{i=1}^n \sum_{j=1}^n b_{ij}\bar{x}_ix_j\\
	&= \sum_{i=1}^nb_{ii}|x_i|^2 + \sum_{1\leq i<j\leq n}b_{ij}\bar{x}_ix_j
	+ \sum_{1\leq j<i\leq n}b_{ij}\bar{x}_ix_j\\
	&= \sum_{i=1}^nb_{ii}|x_i|^2 +\sum_{1\leq i<j\leq n}b_{ij}\bar{x}_ix_j
	+ \sum_{1\leq j<i\leq n}\bar{b}_{ji}\bar{x}_ix_j\\
	&= \sum_{i=1}^nb_{ii}|x_i|^2 +\sum_{1\leq i<j\leq n}b_{ij}\bar{x}_ix_j
	+ \sum_{1\leq i<j\leq n}\bar{b}_{ij}\bar{x}_jx_i\\
	&=\sum_{i=1}^nb_{ii}|x_i|^2 + 2\sum_{1\leq i<j\leq n}\R( b_{ij}\bar{x}_ix_j)\\
	&=\sum_{i=1}^nb_{ii}|x_i|^2 + 2\sum_{1\leq i<j\leq n}(\R( b_{ij})\R(\bar{x}_ix_j)-\I( b_{ij})\I(\bar{x}_ix_j))\\
	&=\sum_{i=1}^{n}a_{ii}|x_i|^2+\sum_{1\leq i<j\leq n}\left(u_{ij}\R(\bar{x}_ix_j)+v_{ij}\I(\bar{x}_ix_j)\right)\\
	&=\langle a,\tilde{x}\rangle.
	\end{align*}
	Therefore, $\langle Tx_k,x_k\rangle=\langle a, \tilde{x}_k\rangle=0$ for all $k$. This implies $T=0$ and hence $a=0$.
	
	$(2)\Rightarrow (1)$: Let $T=(a_{ij})_{i,j=1}^n$ be a self-adjoint operator such that
	$$\langle Tx_k, x_k\rangle=0, \mbox{ for all } k.$$
	
	Define 
	\begin{align*}
	\tilde{T} = (a_{11},&2\R(a_{12}), -2\I(a_{12}),\ldots,2\R(a_{1n}),-2\I(a_{1n});\\ &a_{22},2\R(a_{23}),-2\I(a_{23}),
	\ldots,2\R(a_{2n}),-2\I(a_{2n});\ldots; \\
	&a_{(n-1)(n-1)},2\R(a_{(n-1)n}), -2\I(a_{(n-1)n}); a_{nn})\in \RR^{n^2}.
	\end{align*}
	Then we have
	$$\langle \tilde{T}, \tilde{x}_k\rangle=\langle Tx_k, x_k\rangle=0, \mbox{ for all } k.$$
	Since $\{\tilde{x}_k\}_{k=1}^m$ spans $\mathcal{K}$ we have that $\tilde{T}=0$ 
	and so $T=0$; i.e. $\{x_k\}_{k=1}^m$ gives injectivity.
	
\end{proof}

\begin{corollary}
	If a frame $\mathcal{X}=\{x_k\}_{k=1}^m$ gives injectivity in $\CC^n$, then $m\geq n^2$.
\end{corollary}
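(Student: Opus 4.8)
The plan is to deduce this immediately from the complex classification theorem proved just above, exactly as the analogous real corollary was deduced from Theorem~\ref{finite real}. No new ideas are needed; all the content is already carried by that theorem.

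Concretely, I would argue as follows. Suppose $\mathcal{X}=\{x_k\}_{k=1}^m$ gives injectivity in $\CC^n$. By the classification theorem for the complex case, the associated family $\{\tilde{x}_k\}_{k=1}^m\subset\RR^{n^2}$, with $\tilde{x}_k$ defined as in Definition~\ref{defn finite complex}, spans $\RR^{n^2}$. Since a spanning set of a finite-dimensional vector space must have at least as many elements as the dimension of that space, we conclude $m\ge\dim_{\RR}\RR^{n^2}=n^2$, which is exactly the assertion.

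There is essentially no obstacle here; the one point worth a quick sanity check is that $n^2$ is the correct target dimension. This reflects the fact that the real vector space $Sym(\CC^n)$ of self-adjoint $n\times n$ complex matrices has real dimension $n^2$: there are $n$ real diagonal parameters together with $\binom{n}{2}$ complex off-diagonal parameters contributing $n^2-n$ real parameters, for a total of $n+(n^2-n)=n^2$. Thus the lower bound $m\ge n^2$ is forced. The complementary question of whether it is attained --- that is, whether there exist injective frames consisting of exactly $n^2$ vectors in $\CC^n$ --- is a separate sharpness issue not addressed by this corollary.
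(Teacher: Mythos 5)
Your proof is correct and is exactly the argument the paper intends: the corollary is an immediate consequence of the preceding classification theorem, since a spanning set of $\RR^{n^2}$ must contain at least $n^2$ vectors. The dimension count for $Sym(\CC^n)$ is a fine sanity check but not needed beyond what the theorem already provides.
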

Similar to the real case, we have a classification for injectivity for positive self-adjoint operators of trace one in a complex Hilbert space.  This requires another definition to fit this case.
\begin{definition}
	Given $x=(x_1,x_2,\ldots,x_n) \in \CC^n$, define
	\begin{align*}
	\tilde{x} = (\R(\bar{x}_1x_2),& \I(\bar{x}_1x_2),\ldots,\R(\bar{x}_1x_n),\I(\bar{x}_1x_n);\\ &|x_2|^2-|x_1|^2,\R(\bar{x}_2x_3),\I(\bar{x}_2x_3),
	\ldots,\R(\bar{x}_2x_n),\I(\bar{x}_2x_n);\ldots;\\
	& |x_{n-1}|^2-|x_1|^2,\R(\bar{x}_{n-1}x_n), \I(\bar{x}_{n-1}x_n); |x_n|^2-|x_1|^2)\in \RR^{n^2-1}.
	\end{align*}
\end{definition}
Now we classify the frames which give injectivity in the complex
case for operators of trace one.

\begin{theorem}
	Let $\mathcal{X}=\{x_k\}_{k=1}^m$ be a frame for $\CC^n$.  The following
	are equivalent:
	\begin{enumerate}
		\item $\mathcal{X}$ gives injectivity for all self-adjoint operators of trace one.
		\item We have that $\{\tilde{x}_k\}_{k=1}^m$ spans $\RR^{n^2-1}$.
	\end{enumerate}
\end{theorem}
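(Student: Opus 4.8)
The plan is to follow the template of the preceding two theorems, combining the absorption of the trace-zero condition used in the real trace-one classification with the real/imaginary bookkeeping used in the full complex classification. By the Remark following Theorem~\ref{positivefinite}, statement (1) is equivalent to the assertion that whenever $T=(a_{ij})_{i,j=1}^n$ is self-adjoint with $\tr(T)=0$ and $\langle Tx_k,x_k\rangle=0$ for all $k$, then $T=0$. So throughout I work with trace-zero self-adjoint $T$, and the role of deleting the $|x_1|^2$ slot in the definition of $\tilde x$ is precisely to encode the constraint $a_{11}=-\sum_{i=2}^n a_{ii}$.

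For $(1)\Rightarrow(2)$, I would take an arbitrary vector $a\in\RR^{n^2-1}$, written with coordinates matching the pattern of $\tilde x$, say $a=(u_{12},v_{12},\dots,u_{1n},v_{1n};\,a_{22},u_{23},v_{23},\dots;\ \dots;\ a_{(n-1)(n-1)},u_{(n-1)n},v_{(n-1)n};\,a_{nn})$, and assume $\langle a,\tilde x_k\rangle=0$ for all $k$. I then build $T=(b_{ij})$ by setting $b_{11}=-\sum_{i=2}^n a_{ii}$, $b_{ii}=a_{ii}$ for $i\ge2$, and $b_{ij}=\overline{b}_{ji}=\tfrac12(u_{ij}-\iota v_{ij})$ for $i<j$; this $T$ is self-adjoint with $\tr(T)=0$ by construction. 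Running the same computation of $\langle Tx,x\rangle$ as in the full complex theorem — the off-diagonal part gives $2\sum_{i<j}\bigl(\R(b_{ij})\R(\bar x_ix_j)-\I(b_{ij})\I(\bar x_ix_j)\bigr)=\sum_{i<j}\bigl(u_{ij}\R(\bar x_ix_j)+v_{ij}\I(\bar x_ix_j)\bigr)$, while the diagonal part gives $\bigl(-\sum_{i\ge2}a_{ii}\bigr)|x_1|^2+\sum_{i\ge2}a_{ii}|x_i|^2=\sum_{i\ge2}a_{ii}(|x_i|^2-|x_1|^2)$ — I obtain exactly $\langle Tx,x\rangle=\langle a,\tilde x\rangle$. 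Hence $\langle Tx_k,x_k\rangle=0$ for all $k$, so (1) forces $T=0$, so $a=0$; since $a$ was an arbitrary element of the orthogonal complement of $\spn\{\tilde x_k\}_{k=1}^m$, that span is all of $\RR^{n^2-1}$.

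For $(2)\Rightarrow(1)$, I would take self-adjoint $T=(a_{ij})$ with $\tr(T)=0$ and $\langle Tx_k,x_k\rangle=0$ for all $k$, so $a_{11}=-\sum_{i=2}^n a_{ii}$, and define $\tilde T=(2\R(a_{12}),-2\I(a_{12}),\dots,2\R(a_{1n}),-2\I(a_{1n});\,a_{22},2\R(a_{23}),-2\I(a_{23}),\dots;\ \dots;\ a_{(n-1)(n-1)},2\R(a_{(n-1)n}),-2\I(a_{(n-1)n});\,a_{nn})\in\RR^{n^2-1}$. The same identity, now read in reverse and using $a_{11}=-\sum_{i\ge2}a_{ii}$ to convert the missing $|x_1|^2$ coordinate into the shifts $|x_i|^2-|x_1|^2$, gives $\langle\tilde T,\tilde x_k\rangle=\langle Tx_k,x_k\rangle=0$ for all $k$; since $\{\tilde x_k\}_{k=1}^m$ spans $\RR^{n^2-1}$, $\tilde T=0$, which forces $a_{ij}=0$ for all $i<j$ and $a_{ii}=0$ for $i\ge2$, and then $a_{11}=-\sum_{i\ge2}a_{ii}=0$, so $T=0$. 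The only real work — and the only place an error could slip in — is checking that the single identity $\langle Tx,x\rangle=\langle a,\tilde x\rangle$ survives the simultaneous deletion of the $|x_1|^2$ slot and the re-centering of the remaining diagonal entries; once that bookkeeping is pinned down, both implications are immediate.
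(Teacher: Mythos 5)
Your proposal is correct and follows essentially the same route as the paper: reduce (1) to the trace-zero statement, build the trace-zero self-adjoint $T$ with $b_{11}=-\sum_{i\ge 2}a_{ii}$ and $b_{ij}=\tfrac12(u_{ij}-\iota v_{ij})$ for one direction, and read the identity $\langle Tx,x\rangle=\langle a,\tilde{x}\rangle$ in reverse via the vector $\tilde{T}$ for the other. The bookkeeping you flag as the only delicate point checks out exactly as in the paper's computation.
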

\begin{proof}
	$(1)\Rightarrow (2)$: 
	Let a vector 
	\begin{align*}
	a= (u_{12},v_{12},\ldots,u_{1n},v_{1n}; &a_{22},u_{23},v_{23},
	\ldots,u_{2n},v_{2n};\ldots;\\
	&a_{(n-1)(n-1)},u_{(n-1)n},v_{(n-1)n}; a_{nn})\in \RR^{n^2-1}
	\end{align*}
	be such that $\langle a,\tilde{x}_k\rangle =0$ for all $k$.
	
	Define an operator $T=(b_{ij})_{i,j=1}^n$ with $b_{11}=-\sum_{i=2}^{n}a_{ii}$,
	$b_{ii}=a_{ii}$ for $i= 2, \ldots, n$ and $b_{ij}=\bar{b}_{ji}=\dfrac{1}{2}(u_{ij}-\iota v_{ij})$ for $i<j$. Then $T$ is a self-adjoint operator and $\tr(T)=0$.
	
	For any $x=(x_1, x_2, \ldots, x_n)\in \CC^n$ we have
	\begin{align*}
	\langle Tx,x\rangle &=\sum_{i=1}^nb_{ii}|x_i|^2 + 2\sum_{1\leq i<j\leq n}(\R( b_{ij})\R(\bar{x}_ix_j)-\I( b_{ij})\I(\bar{x}_ix_j))\\
	&=\left(-\sum_{i=2}^{n}a_{ii}\right)|x_1|^2+\sum_{i=2}^{n}a_{ii}|x_i|^2+\sum_{1\leq i<j\leq n}\left(u_{ij}\R(\bar{x}_ix_j)+v_{ij}\I(\bar{x}_ix_j)\right)\\
	&=\langle a,\tilde{x}\rangle.
	\end{align*}
	Therefore, $\langle Tx_k,x_k\rangle=\langle a, \tilde{x}_k\rangle=0$ for all $k$. This implies $T=0$ and hence $a=0$.
	
	$(2)\Rightarrow (1)$: Let $T=(a_{ij})_{i,j=1}^n$ be a self-adjoint operator such that $\tr(T)=0$ and
	$\langle Tx_k, x_k\rangle=0$ for all $k$. Then $a_{11}=-\sum_{i=2}^{n}a_{ii}.$
	
	Define 
	\begin{align*}
	\tilde{T} = &(2\R(a_{12}), -2\I(a_{12}),\ldots,2\R(a_{1n}),-2\I(a_{1n});\\ &a_{22},2\R(a_{23}),-2\I(a_{23}),
	\ldots,2\R(a_{2n}),-2\I(a_{2n});\ldots; \\
	&a_{(n-1)(n-1)},2\R(a_{(n-1)n}), -2\I(a_{(n-1)n}); a_{nn})\in \RR^{n^2-1}.
	\end{align*}
	Then we have that
	$$\langle \tilde{T}, \tilde{x}_k\rangle=\langle Tx_k, x_k\rangle=0, \mbox{ for all } k.$$
	Since $\{\tilde{x}_k\}_{k=1}^m$ spans $\RR^{n^2-1}$ then $\tilde{T}=0$. Hence $T=0$. 
\end{proof}

Now we will give another classification of injectivity for the
quantum detection problem.  This classification has the disadvantage
that the requirements are quite complex and difficult to verify
in practice.  The advantage here is in the other direction.  That
is, if a frame gives injectivity in the quantum detection problem,
then it must satisfy these complex requirements.

\begin{theorem}
	Let $\mathcal{X}=\{x_k\}_{k=1}^m$ be a frame for a real or complex Hilbert space  $\HH^n$.  The following
	are equivalent:
	\begin{enumerate}
		\item $\mathcal{X}$ gives injectivity.
		\item For every orthonormal basis 
		$\mathcal{E}=\{e_j\}_{j=1}^n$ for $\HH^n$
		we have:
		\[ H(\mathcal{E})=:\spn\{(|\langle x_k,e_1\rangle|^2,|\langle x_k,e_2\rangle|^2,
		\ldots, |\langle x_k,e_n\rangle|^2):k=1,2,\ldots,m\}=\RR^n.\] 
	\end{enumerate}
\end{theorem}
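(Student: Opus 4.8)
The plan is to exploit the fact that, for a self-adjoint operator $T$ and a vector $x$, the quantity $\langle Tx,x\rangle$ depends only on the ``diagonal'' of $T$ with respect to an orthonormal basis diagonalizing $T$: if $T=\sum_{j=1}^n\lambda_je_je_j^*$ is a spectral decomposition with $\lambda_j\in\RR$, then $\langle Tx,x\rangle=\sum_{j=1}^n\lambda_j|\langle x,e_j\rangle|^2$. This translates the operator-theoretic injectivity condition into a linear-algebraic spanning condition for the vectors $(|\langle x_k,e_1\rangle|^2,\ldots,|\langle x_k,e_n\rangle|^2)$, which is exactly what $H(\mathcal{E})=\RR^n$ records.

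For $(1)\Rightarrow(2)$, fix an orthonormal basis $\mathcal{E}=\{e_j\}_{j=1}^n$ and suppose toward a contradiction that $H(\mathcal{E})\neq\RR^n$. Then there is a nonzero $c=(c_1,\ldots,c_n)\in\RR^n$ orthogonal to every vector $(|\langle x_k,e_1\rangle|^2,\ldots,|\langle x_k,e_n\rangle|^2)$, i.e. $\sum_{j=1}^nc_j|\langle x_k,e_j\rangle|^2=0$ for all $k$. Put $T=\sum_{j=1}^nc_je_je_j^*$; since the $c_j$ are real, $T$ is self-adjoint and $T\neq0$. A direct computation gives $\langle Tx_k,x_k\rangle=\sum_{j=1}^nc_j|\langle x_k,e_j\rangle|^2=0$ for all $k$, contradicting that $\mathcal{X}$ gives injectivity. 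Hence $H(\mathcal{E})=\RR^n$ for every such $\mathcal{E}$.

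For $(2)\Rightarrow(1)$, let $T$ be a self-adjoint operator with $\langle Tx_k,x_k\rangle=0$ for all $k$. By the spectral theorem write $T=\sum_{j=1}^n\lambda_je_je_j^*$ for some orthonormal basis $\mathcal{E}=\{e_j\}_{j=1}^n$ and real eigenvalues $\lambda_j$. Then $0=\langle Tx_k,x_k\rangle=\sum_{j=1}^n\lambda_j|\langle x_k,e_j\rangle|^2$ for every $k$, so $(\lambda_1,\ldots,\lambda_n)$ is orthogonal to $H(\mathcal{E})$. Since $H(\mathcal{E})=\RR^n$ by hypothesis, we get $\lambda_1=\cdots=\lambda_n=0$, hence $T=0$; i.e. $\mathcal{X}$ gives injectivity.

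Both implications run identically in the real and complex cases, the only ingredients being that a real-diagonal operator in an orthonormal basis is self-adjoint and that every self-adjoint operator on $\HH^n$ has an orthonormal eigenbasis with real eigenvalues. There is no deep obstacle here; the work is in the bookkeeping — recognizing that the spectral decomposition converts injectivity into a spanning statement, and keeping straight that $(1)\Rightarrow(2)$ must produce a witness operator for an \emph{arbitrary} prescribed basis, whereas $(2)\Rightarrow(1)$ invokes the hypothesis only for the (a priori unknown) eigenbasis of the operator in question.
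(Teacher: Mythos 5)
Your proof is correct and follows essentially the same route as the paper: in $(1)\Rightarrow(2)$ you build a nonzero diagonal self-adjoint operator from a vector orthogonal to $H(\mathcal{E})$, and in $(2)\Rightarrow(1)$ you diagonalize $T$ and read off that its eigenvalue vector is orthogonal to $H(\mathcal{E})=\RR^n$. No gaps; the argument matches the paper's in both structure and detail.
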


\begin{proof}
	$(1)\Rightarrow (2)$:
	We prove the contrapositive.  Suppose that (2) fails. Then there is an
	orthonormal basis $\mathcal{E}=\{e_j\}_{j=1}^n$ so that
	$H(\mathcal{E})\not= \RR^n$.  Hence there is a non-zero vector
	$\lambda=(\lambda_1,\lambda_2,\ldots,\lambda_n)\in \RR^n$ such that $\lambda\perp H(\mathcal{E})$.
	
	Define an operator on $\HH^n$ by
	\[ Te_j = \lambda_je_j, j=1, 2, \ldots, n.\]
	Then $T$ is a non-zero self-adjoint operator and satisfies 
	$\langle Tx_k,x_k\rangle =0$, for all $k=1, 2,\ldots,m$, which is a contradiction.
	
	$(2)\Rightarrow (1)$:
	Let $T$ be a self-adjoint operator such that  $\langle Tx_k,x_k\rangle =0$, for all $k$.
	Let $\mathcal{E}= \{e_j\}_{j=1}^n$ be an eigenbasis for $T$
	with respective eigenvalues $\{\lambda_j\}_{j=1}^n$.  Then
	for every $k=1,2,\ldots,m$ we have
	\[ \langle Tx_k,x_k\rangle = \sum_{j=1}^n\lambda_j|\langle x_k,
	e_j\rangle|^2 =0.\]
	That is,
	\[ (\lambda_1,\lambda_2,\ldots,\lambda_n) \perp H(\mathcal{E}) = \RR^n \mbox{ by assumption (2)}.\]
	Therefore, $\lambda_j=0$ for all $j=1,2,\ldots,n$
	and so $T=0$.
\end{proof}
 Finally in this subsection, we notice that if a family of vectors gives injectivity in a Hilbert space $\HH^n$, then it is a frame for $\HH^n$.
	\begin{proposition}
		Let $\{x_k\}_{k=1}^m$ be a family of vectors in $\HH^n$ which is injective. Then $\spn\{x_k\}_{k=1}^m=\HH^n$.
	\end{proposition}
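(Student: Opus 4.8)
The plan is to prove the contrapositive: assuming $V:=\spn\{x_k\}_{k=1}^m \subsetneq \HH^n$, I will exhibit a nonzero self-adjoint operator $T$ annihilated by every $x_k$ in the sense $\langle Tx_k,x_k\rangle=0$, contradicting injectivity. Since $V$ is a proper subspace of the finite dimensional space $\HH^n$, its orthogonal complement is nontrivial, so I can pick a unit vector $v\in \HH^n$ with $v\perp x_k$ for all $k=1,2,\ldots,m$.

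The witnessing operator is the rank-one operator $T=vv^*$, where $v^*:\HH^n\to\CC$ is $v^*(x)=\langle x,v\rangle$, exactly as in the frame POVM construction earlier in the paper. This $T$ is self-adjoint (indeed positive) and nonzero, since $\langle Tv,v\rangle=\|v\|^4=1$. For each $k$ I compute
\[
\langle Tx_k,x_k\rangle=\langle \langle x_k,v\rangle v,\,x_k\rangle=\langle x_k,v\rangle\langle v,x_k\rangle=|\langle x_k,v\rangle|^2=0,
\]
because $v\perp x_k$. Thus $T\neq 0$ is self-adjoint and Hilbert--Schmidt (finite rank) with $\langle Tx_k,x_k\rangle=0$ for all $k$, so $\{x_k\}_{k=1}^m$ is not injective. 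This contradiction forces $V=\HH^n$.

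There is no real obstacle here; the only thing to be careful about is the bookkeeping of the inner-product convention so that $\langle vv^*x_k,x_k\rangle$ comes out as $|\langle x_k,v\rangle|^2$ in both the real and complex cases, and noting that finite rank operators are automatically Hilbert--Schmidt so the operator produced genuinely lies in the class for which injectivity was defined. One could alternatively take $T$ to be the orthogonal projection of $\HH^n$ onto $\spn\{v\}$, which is manifestly self-adjoint, nonzero, and kills each $x_k$; the argument is identical.
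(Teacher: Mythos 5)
Your proof is correct and is essentially the paper's own argument: the paper takes $T$ to be the orthogonal projection onto $W^\perp$, while you take the rank-one projection $vv^*$ onto the span of a single unit vector $v\in W^\perp$, and both yield a nonzero self-adjoint operator with $\langle Tx_k,x_k\rangle=0$ for all $k$. The computation and the contradiction are the same.
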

	\begin{proof}
		Suppose by contradiction that $W:=\spn\{x_k\}_{k=1}^m\not=\HH^n$. Let $P$ be the orthogonal projection onto $W^\perp$. Then
		$\langle Px_k, x_k\rangle=0$
		for all $k$, but $P\not=0$, a contradiction.
	\end{proof}
\subsection{Constructing the Solutions to the Injectivity Problem}\

In this subsection, we will construct large classes of frames which give injectivity for the quantum detection problem in both the real and complex cases.

\begin{theorem}\label{thm2}
	Let $\{x_k\}_{k=1}^n$ be a linearly independent set in $\RR^n$ such that the first coordinates of these vectors are non-zero.
	Now choose $(n-1)$ linearly independent vectors $\{x_k\}_{k=n+1}^{2n-1}$ in $\RR^n$ such that each vector is zero in the first coordinate and is non-zero in the second coordinate.
	Continuing this procedure we get a frame $\{x_k\}_{k=1}^{\frac{n(n+1)}{2}}$ which gives injectivity.
\end{theorem}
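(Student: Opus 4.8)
The plan is to verify injectivity directly from the definition, exploiting the ``staircase'' structure of the construction. Relabel the vectors produced at the $j$-th step ($j=1,\dots,n$) as $y^{(j,1)},\dots,y^{(j,n-j+1)}$; by construction each $y^{(j,r)}$ is supported on the coordinates $\{j,j+1,\dots,n\}$, has nonzero $j$-th coordinate, and the $n-j+1$ vectors of step $j$ are linearly independent. Since these vectors lie in the $(n-j+1)$-dimensional subspace $\spn\{e_j,\dots,e_n\}$, linear independence forces them to be a basis of that subspace; equivalently, the restricted vectors $\big(y^{(j,r)}_j,\dots,y^{(j,r)}_n\big)$, $r=1,\dots,n-j+1$, span $\RR^{n-j+1}$. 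The total count is $\sum_{j=1}^n (n-j+1)=\frac{n(n+1)}{2}$, and since step $1$ already supplies $n$ linearly independent vectors in $\RR^n$, the whole family spans $\RR^n$ and is therefore a frame; it remains only to prove it gives injectivity, i.e. (using the definition of injective, and that in finite dimensions every operator is Hilbert--Schmidt) that a self-adjoint $T=(a_{ij})$ with $\langle Tx_k,x_k\rangle=0$ for all $k$ must vanish.

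First I would fix such a $T$ and prove, by downward induction on $\ell$ running from $n$ to $1$, the statement $P(\ell)$: $a_{\ell j}=0$ for every $j\ge\ell$ (hence also $a_{j\ell}=0$ by self-adjointness). Assume $P(\ell+1),\dots,P(n)$, so that $a_{ij}=0$ whenever $i,j\ge \ell+1$. Applying Proposition \ref{prop1} with $x=y^{(\ell,r)}$ and using that $y^{(\ell,r)}$ is supported on $\{\ell,\dots,n\}$, every surviving term of $\langle Ty^{(\ell,r)},y^{(\ell,r)}\rangle$ involves the index $\ell$, so
\[
0=\langle Ty^{(\ell,r)},y^{(\ell,r)}\rangle
= y^{(\ell,r)}_\ell\Big(a_{\ell\ell}\,y^{(\ell,r)}_\ell+2\sum_{j=\ell+1}^n a_{\ell j}\,y^{(\ell,r)}_j\Big).
\]
Because $y^{(\ell,r)}_\ell\neq 0$, the parenthesized linear form vanishes for each $r=1,\dots,n-\ell+1$; that is, $(a_{\ell\ell},2a_{\ell,\ell+1},\dots,2a_{\ell n})\in\RR^{n-\ell+1}$ is orthogonal to each restricted vector $\big(y^{(\ell,r)}_\ell,\dots,y^{(\ell,r)}_n\big)$. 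Since those $n-\ell+1$ restricted vectors span $\RR^{n-\ell+1}$, we get $a_{\ell j}=0$ for all $j\ge\ell$, proving $P(\ell)$. Running the induction down to $\ell=1$ yields $a_{ij}=0$ for all $i\le j$, hence $T=0$, so $\mathcal{X}$ gives injectivity.

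Alternatively, one could invoke Theorem \ref{finite real} and instead show that $\{\tilde x_k\}_{k=1}^{n(n+1)/2}$ spans $\RR^{n(n+1)/2}$; this amounts to the same triangular elimination, merely transposed. I do not anticipate a genuine obstacle: the content is entirely in the staircase bookkeeping --- keeping track of which coordinates a step-$j$ vector is supported on, noting that the inductive hypothesis kills exactly the entries $a_{ij}$ with $i,j\ge\ell+1$, and carrying the factor $2$ from Proposition \ref{prop1} --- together with the one small observation that linear independence of the step-$j$ vectors inside $\spn\{e_j,\dots,e_n\}$ is precisely what makes their restrictions to coordinates $j,\dots,n$ span $\RR^{n-j+1}$.
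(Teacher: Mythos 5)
Your argument is correct, but it is organized differently from the paper's. The paper proves the theorem by showing that the $\frac{n(n+1)}{2}$ tilde vectors $\{\tilde{x}_k\}$ are linearly independent --- a forward elimination on the coefficients of a dependency relation $\sum_k \alpha_k \tilde{x}_k = 0$, peeling off the first $n$ coefficients using the first block of coordinates (where only the step-$1$ tilde vectors are nonzero), then the next $n-1$, and so on --- and then concludes via Theorem \ref{finite real} together with the dimension count (independence of exactly $\frac{n(n+1)}{2}$ vectors in $\RR^{\frac{n(n+1)}{2}}$ forces spanning). You instead verify injectivity directly from the definition by a downward induction on the rows of $T$, which is the transposed elimination: rather than killing coefficients of the $\tilde{x}_k$, you kill the entries $a_{\ell j}$ of the annihilating operator. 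The two are dual, but yours has two small advantages: it does not need the count of vectors to match the dimension exactly (so it would survive adding extra vectors, whereas the paper's independence argument is tied to having precisely $\frac{n(n+1)}{2}$ of them), and it does not route through Theorem \ref{finite real}. The paper's version is shorter because Corollary \ref{Coro2} has already packaged the quadratic-form computation into the tilde formalism. Both hinge on the same observation you isolate explicitly and the paper leaves implicit: the $n-\ell+1$ step-$\ell$ vectors, being independent inside the $(n-\ell+1)$-dimensional span of $\{e_\ell,\dots,e_n\}$, restrict to a basis of $\RR^{n-\ell+1}$.
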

\begin{proof}
	We will show that $\{\tilde{x}_k\}_{k=1}^{\frac{n(n+1)}{2}}$ is a basis for $\RR^{\frac{n(n+1)}{2}}$.
	
	Indeed, suppose that $\sum_{k=1}^{\frac{n(n+1)}{2}}\alpha_k\tilde{x}_k=0$ for some scalars $\{\alpha_k\}$. Since after $n$, all tilde vectors are zero in the first coordinate, then we get 
	
	$$\sum_{k=1}^n\alpha_kx_{k1}x_k=0.$$  Since $\{x_k\}_{k=1}^n$ are linearly independent, $\alpha_kx_{k1}=0$ for all $k$ and
	since $x_{k1}\not=0$, $ \alpha_k=0$ for $k=1,2,\ldots,n.  $
	
	Now do this argument for the next $(n-1)$ vectors and continue we get $\alpha_k=0$ for all $k=1, 2, \ldots, \frac{n(n+1)}{2}$.
\end{proof}
A simple example satisfying the construction is the following.
\begin{ex}
 The frame
 $$\{e_i\}_{i=1}^n\cup\{e_i+e_j : i<j\}_{i,j=1}^n$$ gives injectivity.
\end{ex}

For the complex case, we have the following construction. The proof is as in the real case.

\begin{theorem}
	Let $\{x_k\}_{k=1}^{2n-1}$ be a basis for $\RR^{2n-1}$, where
	$$x_k=(u_{k1}, u_{k2}, v_{k2}, \ldots, u_{kn}, v_{kn})$$ and $u_{k1}\not=0$, $k=1,\ldots, 2n-1.$
	
	Define $(2n-1)$ vectors $\{z_k\}_{k=1}^{2n-1}$ in $\CC^n$ by
	$$z_k=(u_{k1}, u_{k2}+\iota v_{k2}, \ldots, u_{kn}+\iota v_{kn}).$$
	
	Now let $\{x_k\}_{k=2n}^{4n-4}$ be a basis for $\RR^{2n-3}$, where 
	$$x_k=(u_{k2}, u_{k3}, v_{k3}, \ldots, u_{kn}, v_{kn})$$ and $u_{k2}\not=0$, $k=2n, \ldots, 4n-4$.
	
	Define $(2n-3)$ vectors $\{z_k\}_{k=2n}^{4n-4}$ in $\CC^n$ by
	$$z_k=(0, u_{k2}, u_{k3}+\iota v_{k3}, \ldots, u_{kn}+\iota v_{kn}).$$
	
	Continuing this procedure we get $n^2$ vectors $\{z_k\}_{k=1}^{n^2}$ in $\CC^n$ and they give injectivity.
	
\end{theorem}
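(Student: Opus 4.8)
The plan is to follow the proof of Theorem~\ref{thm2} almost verbatim: we show that the vectors $\{\tilde z_k\}_{k=1}^{n^2}$, formed from the $z_k$ via Definition~\ref{defn finite complex}, are linearly independent in $\RR^{n^2}$. The construction produces exactly $\sum_{\ell=1}^{n}\bigl(2(n-\ell)+1\bigr)=n^2$ vectors, so linear independence makes $\{\tilde z_k\}$ a basis of $\RR^{n^2}$; then $\{z_k\}$ gives injectivity because any self-adjoint $T$ with $\langle Tz_k,z_k\rangle=0$ for all $k$ satisfies $\langle\tilde T,\tilde z_k\rangle=0$ for all $k$ (by the identity $\langle Tx,x\rangle=\langle\tilde T,\tilde x\rangle$ established in the proof of the complex classification theorem), forcing $\tilde T=0$ and hence $T=0$.

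First I would set up the block structure of Definition~\ref{defn finite complex}. The coordinates of $\tilde x$ split into $n$ consecutive blocks $B_1,\dots,B_n$, where $B_\ell$ has length $2(n-\ell)+1$ and records $|x_\ell|^2$ followed by the pairs $\R(\bar x_\ell x_j),\I(\bar x_\ell x_j)$ for $j=\ell+1,\dots,n$. Let $I_\ell$ index the $\bigl(2(n-\ell)+1\bigr)$ vectors introduced at the $\ell$-th stage of the construction. By design, for $k\in I_\ell$ the first $\ell-1$ coordinates of $z_k$ vanish, the $\ell$-th coordinate $z_{k\ell}=u_{k\ell}$ is real and nonzero, and $z_{kj}=u_{kj}+\iota v_{kj}$ for $j>\ell$, where the prescribed basis vector is $x_k=(u_{k\ell},u_{k(\ell+1)},v_{k(\ell+1)},\dots,u_{kn},v_{kn})\in\RR^{2(n-\ell)+1}$.

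The key computation is that for $k\in I_\ell$ the blocks $B_1,\dots,B_{\ell-1}$ of $\tilde z_k$ are zero, since they involve only the coordinates $z_{km}$ with $m<\ell$, which all vanish; and the block $B_\ell$ of $\tilde z_k$ equals $u_{k\ell}\,x_k$, because reality of $z_{k\ell}$ gives $|z_{k\ell}|^2=u_{k\ell}^2$, $\R(\bar z_{k\ell}z_{kj})=u_{k\ell}u_{kj}$ and $\I(\bar z_{k\ell}z_{kj})=u_{k\ell}v_{kj}$. This ``upper triangular'' structure lets the argument of Theorem~\ref{thm2} go through: if $\sum_k\alpha_k\tilde z_k=0$, then projecting onto $B_1$ annihilates every term with $k\notin I_1$ and leaves $\sum_{k\in I_1}\alpha_k u_{k1}x_k=0$; since the $x_k$, $k\in I_1$, form a basis of $\RR^{2n-1}$ and $u_{k1}\neq0$, we get $\alpha_k=0$ for $k\in I_1$. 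Inductively, once $\alpha_k=0$ for all $k\in I_1\cup\dots\cup I_{\ell-1}$, projecting onto $B_\ell$ leaves $\sum_{k\in I_\ell}\alpha_k u_{k\ell}x_k=0$, and the same reasoning gives $\alpha_k=0$ for $k\in I_\ell$. Hence all $\alpha_k$ vanish and $\{\tilde z_k\}$ is a basis of $\RR^{n^2}$.

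The main obstacle is precisely the key computation above: one must check that the ordering of the real and imaginary parts within block $B_\ell$ of Definition~\ref{defn finite complex} lines up with the ordering of the real coordinates $(u_{k(\ell+1)},v_{k(\ell+1)},\dots,u_{kn},v_{kn})$ of $x_k$, and that it is the reality of the leading entry $z_{k\ell}$ that makes the scalar $u_{k\ell}$ factor cleanly out of the entire block. Once this bookkeeping is pinned down, the stage-by-stage elimination is identical to the real case and introduces nothing new.
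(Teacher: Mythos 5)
Your proposal is correct and is exactly the argument the paper intends: the paper's "proof" of this theorem is the single sentence "The proof is as in the real case," and you have carried out precisely that adaptation, with the key point — that block $B_\ell$ of $\tilde z_k$ equals $u_{k\ell}x_k$ because the leading nonzero entry $z_{k\ell}=u_{k\ell}$ is real — correctly identified and verified. Nothing further is needed.
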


As we have seen, we can get Parseval frames giving injectivity
by taking $\{S^{-1/2}x_k\}_{k=1}^m$, where $\{x_k\}_{k=1}^m$
gives injectivity and has frame operator $S$.  But the above
construction can be adjusted to directly construct Parseval
frames giving injectivity.

\begin{theorem}
	Let $\{\lambda_{ij}\}_{i=1,j=i}^{\ n\ \ \ n}$ be non-negative numbers
	satisfying:
	\begin{enumerate}
		\item 
		$\lambda_{ij}=0\mbox{ if and only if } j<i.$
		\item For each $j=1,2,\ldots,n$ we have
		$ \sum_{i=1}^n \lambda_{ij}=1.$
	\end{enumerate}
	Let $\mathcal{E}=\{e_j\}_{j=1}^n$ be the canonical basis of $\RR^n$.
	Let $\{x_k\}_{k=1}^{\frac{n(n+1)}{2}}$ be vectors in $\RR^n$
	which satisfy:
	\begin{enumerate}
		\item $\{x_k\}_{k=1}^n$ is a linearly independent set with
		$x_{k1}\not= 0$ for all $k=1,\ldots,n$ and it
		has frame operator $S_1$ with eigenvectors $\mathcal{E}$ and
		respective eigenvalues $\{\lambda_{1j}\}_{j=1}^n$
		(See \cite{C}.)
		\item $\{x_k\}_{k=n+1}^{2n-1}$ is a linearly independent set
		with $x_{k1}=0$, for all $k$, $x_{k2}\not= 0$ for all $k$, and it
		has frame operator $S_2$ with eigenvectors $\mathcal{E}$ and
		respective eigenvalues $\{\lambda_{2j}\}_{j=1}^n$.
		\item continue.
	\end{enumerate}
	Then the vectors $\{x_k\}_{k=1}^{\frac{n(n+1)}{2}}$ form a Parseval
	frame for $\RR^n$ which is injective.
\end{theorem}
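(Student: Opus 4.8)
The plan is to establish the two assertions—Parseval and injective—separately; once the bookkeeping is arranged, each is short. First note that the count is right: the $i$-th block contributes $n-i+1$ vectors and $\sum_{i=1}^{n}(n-i+1)=\frac{n(n+1)}{2}$, so $\{x_k\}_{k=1}^{n(n+1)/2}$ is indeed a family of the asserted size. Moreover $\{x_k\}_{k=1}^{n}$ is already linearly independent in $\RR^n$, hence spans $\RR^n$, so the whole family is automatically a frame. It remains to show the frame operator is the identity and that the family gives injectivity.

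For the Parseval property I would use that the frame operator of a finite union of families is the sum of the frame operators of the pieces: writing $S$ for the frame operator of $\{x_k\}_{k=1}^{n(n+1)/2}$ and $S_i$ for the frame operator of the $i$-th block, one has $S=\sum_{i=1}^{n}S_i$ directly from $Sx=\sum_k\langle x,x_k\rangle x_k$. By hypothesis $S_i e_j=\lambda_{ij}e_j$ for every $j$, so for each $j$ we get $S e_j=\sum_{i=1}^{n}\lambda_{ij}\,e_j=\big(\sum_{i=1}^{n}\lambda_{ij}\big)e_j=e_j$ by condition (2) on the $\lambda_{ij}$. Thus $S=I$, so the family is a Parseval frame (and this reconfirms it is a frame).

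For injectivity I would reduce to Theorem \ref{thm2}. Condition (1) says $\lambda_{ij}=0$ precisely when $j<i$; combined with $\langle S_i e_j,e_j\rangle=\sum_{k\text{ in block }i}x_{kj}^2=\lambda_{ij}$ this forces $x_{kj}=0$ for every $k$ in the $i$-th block and every $j<i$, i.e. the $i$-th block lies in $\spn\{e_i,\dots,e_n\}$. Together with the linear independence of each block and the hypothesis that the block-$i$ vectors have nonzero $i$-th coordinate (the content of the ``continue'' clause, matching the conditions $x_{k1}\neq 0$ and $x_{k2}\neq 0$ stated explicitly for the first two blocks), the family $\{x_k\}_{k=1}^{n(n+1)/2}$ is exactly of the form produced in Theorem \ref{thm2}. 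Hence $\{\tilde{x}_k\}_{k=1}^{n(n+1)/2}$ is a basis of $\RR^{n(n+1)/2}$ and, by Theorem \ref{finite real}, the family gives injectivity.

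The only point requiring care is matching the hypotheses to those of Theorem \ref{thm2}—in particular extracting the staircase support structure from the eigenvalue condition (1) and noting that the pivot-nonzero condition for the later blocks is part of the ``continue'' hypothesis. If one prefers not to lean on that phrasing, one can instead rerun verbatim the staircase elimination argument from the proof of Theorem \ref{thm2} on a relation $\sum_k\alpha_k\tilde{x}_k=0$, peeling off one block at a time. Existence of blocks with the prescribed diagonal frame operators and nonzero pivots is guaranteed by \cite{C}, so the hypotheses are non-vacuous. I do not expect a genuine obstacle here; the substance of the result is that the block decomposition lets one prescribe the eigenstructure of the frame operator while keeping the staircase pattern that forces $\{\tilde x_k\}$ to span.
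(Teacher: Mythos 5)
Your proposal is correct and follows essentially the same route as the paper: Parsevality is obtained by writing the frame operator as $\sum_{i=1}^n S_i$ and using $S_ie_j=\lambda_{ij}e_j$ together with $\sum_i\lambda_{ij}=1$, and injectivity is reduced to Theorem \ref{thm2}. Your extra observation that the staircase support pattern can be recovered from $\langle S_ie_j,e_j\rangle=\lambda_{ij}=0$ for $j<i$ is a harmless (and correct) elaboration of hypotheses the theorem already states explicitly.
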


\begin{proof}
	This is injective by Theorem \ref{thm2}. To see that it is
	Parseval, observe that the frame operator of this frame is
	$\sum_{i=1}^nS_i$.  Now, let $y\in \RR^n$ and compute:
	\begin{align*}
	\sum_{i=1}^nS_iy&= \sum_{i=1}^n S_i\left(\sum_{j=1}^n \langle y,e_j\rangle e_j\right)\\
	&= \sum_{i=1}^n\sum_{j=1}^n \langle y,e_j\rangle S_ie_j\\
	&= \sum_{i=1}^n \sum_{j=1}^n \langle y,e_j\rangle \lambda_{ij}e_j\\
	&= \sum_{j=1}^n \langle y, e_j\rangle e_j \sum_{i=1}^n \lambda_{ij}\\
	&= \sum_{j=1}^n \langle y,e_j\rangle e_j
	= y.
	\end{align*}   
\end{proof}
Similarly, we have the following theorem for the complex case.
\begin{theorem}
	Fix $\{\lambda_{ij}\}_{i=1,j=i}^{\\ n\ n}$ be non-negative numbers
	satisfying:
	\begin{enumerate}
		\item 
		$\lambda_{ij}=0\mbox{ if and only if } j<i.$
		\item For each $j=1,2,\ldots,n$ we have
		$ \sum_{i=1}^n \lambda_{ij}=1.$
	\end{enumerate}
	Let $\mathcal{E}=\{e_j\}_{j=1}^n$ be the canonical basis of $\CC^n$.
	Let $\{z_k\}_{k=1}^{n^2}$ be vectors in $\CC^n$
	which satisfy:
	\begin{enumerate}
		\item For each $k=1, \ldots, 2n-1$, $z_k$ has the form 
		$$z_k=(u_{k1}, u_{k2}+\iota v_{k2}, \ldots, u_{kn}+\iota v_{kn}), $$ where $u_{k1}\not=0$ and the set 
		$\{(u_{k1}, u_{k2}, v_{k2}, \ldots, u_{kn}, v_{kn})\}_{k=1}^{2n-1}$ is linearly independent in $\RR^{2n-1}$. Moreover $\{z_k\}_{k=1}^{2n-1}$ 
		has frame operator $S_1$ with eigenvectors $\mathcal{E}$ and
		respective eigenvalues $\{\lambda_{1j}\}_{j=1}^n$. 
		\item For each $k=2n, \ldots, 4n-4$, $z_k$ has the form 
		$$z_k=(0, u_{k2}, u_{k3}+\iota v_{k3}, \ldots, u_{kn}+\iota v_{kn}),$$ where $u_{k2}\not=0$
		and the set 
		$\{(u_{k2}, u_{k3}, v_{k3}, \ldots, u_{kn}, v_{kn})\}_{k=2n}^{4n-4}$ is linearly independent in $\RR^{2n-3}$. Moreover $\{z_k\}_{k=2n}^{4n-4}$ 
		has frame operator $S_2$ with eigenvectors $\mathcal{E}$ and
		respective eigenvalues $\{\lambda_{2j}\}_{j=1}^n$. 
		\item continue.
	\end{enumerate}
	Then the vectors $\{z_k\}_{k=1}^{n^2}$ form a Parseval
	frame for $\CC^n$ which is injective.
\end{theorem}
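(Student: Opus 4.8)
The plan is to separate the statement into its two assertions and dispatch each by methods already in place. \emph{Injectivity} of $\{z_k\}_{k=1}^{n^2}$ is immediate: the conditions imposed on the $i$-th block $\{z_k\}$ --- the coordinate pattern (zeros in the first $i-1$ entries, nonzero $i$-th entry) together with linear independence of the associated real vectors --- are precisely the hypotheses of the preceding complex construction theorem, whose conclusion is that such a family gives injectivity. So that half needs nothing new.

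For the \emph{Parseval} property I would argue exactly as in the real-case Parseval theorem. The key step is that the frame operator is additive over the partition of $\{z_k\}_{k=1}^{n^2}$ into the $n$ blocks: if $S$ denotes the frame operator of the whole family and $S_i$ that of the $i$-th block, then
\[ S = \sum_{k=1}^{n^2} z_k z_k^{*} = \sum_{i=1}^{n}\Big(\sum_{z_k \text{ in block } i} z_k z_k^{*}\Big) = \sum_{i=1}^{n} S_i. \]
Evaluating on the canonical basis and using that each $S_i$ is diagonalized by $\mathcal{E}$ with $S_i e_j = \lambda_{ij} e_j$, one gets, for every $j=1,\dots,n$,
\[ S e_j = \sum_{i=1}^{n} S_i e_j = \Big(\sum_{i=1}^{n}\lambda_{ij}\Big) e_j = e_j, \]
the last step being condition (2) on the $\lambda_{ij}$. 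Hence $S=I$; since $\sum_k z_k z_k^{*}=I$ forces $\sum_k |\langle x, z_k\rangle|^2 = \|x\|^2$ for all $x$, the family $\{z_k\}_{k=1}^{n^2}$ is a Parseval frame, as recalled in the preliminaries.

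I do not expect a genuine obstacle. The only points deserving a word of care are bookkeeping: condition (1) ($\lambda_{ij}=0$ iff $j<i$) is exactly what is forced on the eigenvalues of $S_i$ by the fact that the $i$-th block is supported on $\spn\{e_i,\dots,e_n\}$ (so that $S_i e_j=0$ for $j<i$), and that block configurations realizing the prescribed diagonal frame operators exist --- this is arranged just as in the real case. Granting the data, the displayed computation completes the proof.
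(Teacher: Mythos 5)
Your proof is correct and follows essentially the same route as the paper: the paper gives no separate argument for the complex case but points to the real-case proof, which likewise obtains injectivity from the preceding construction theorem and the Parseval property by writing the frame operator as $\sum_{i=1}^n S_i$ and computing $Se_j=\bigl(\sum_{i=1}^n\lambda_{ij}\bigr)e_j=e_j$. Nothing further is needed.
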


\begin{remark}
We can easily vary the above construction to find frames which give injectivity and have any previously prescribed
	eigenvalues for their frame operators.
\end{remark}

We recall:

\begin{definition}
Two orthonormal bases $\{x_k\}_{k=1}^n$ and $\{y_k\}_{k=1}^n$
are {\bf mutually unbiased} if 
\[ |\langle x_k,y_j\rangle|= \frac{1}{\sqrt{n}},\mbox{ for all }
i,j=1,2,\ldots,n.\]
A family of orthonormal bases is {\bf mutually unbiased} if
each pair is mutually unbiased.
\end{definition}

It is known that the maximal number of mutually unbiased bases
in $\HH^n$ is n+1 and this is rarely achieved.  It holds if
$n=p^m$ for a prime p.  It is observed in \cite{S} and 
\cite{BK} that a maximal family of mutually unbiased bases will
give injectivity in the quantum detection problem.

\subsection{The Solutions are Open and Dense}

In this section we will show that the family of
$m$-element frames which solve the quantum
detection injectivity problem is open and dense in the family
of all $m$-element frames.  For this, we need to measure the
distance between $m$-element frames.
There is a standard metric measuring the distance between
frames.

\begin{definition}
Given frames $\mathcal{X}=\{x_k\}_{k=1}^m$ and $\mathcal{Y}=\{y_k\}_{k=1}^m$ for a 
real or complex Hilbert space $\HH^n$, the {\bf distance} between
them is
\[ d(\mathcal{X},\mathcal{Y})^2 = \sum_{k=1}^m\|x_k-y_k\|^2.\]
\end{definition}

\begin{theorem}
		The set of all $m$-element frames on $\HH^n$ that give injectivity in the
		frame quantum detection problem is dense in the space of all 
		$m$-element frames on $\HH^n$.
	\end{theorem}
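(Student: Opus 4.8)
The plan is to use the characterization from Theorem~\ref{finite real} (and its complex analogue): a frame $\mathcal{X}=\{x_k\}_{k=1}^m$ gives injectivity if and only if the associated vectors $\{\tilde{x}_k\}_{k=1}^m$ span the space $\mathcal{K}$ (which is $\RR^{n(n+1)/2}$ in the real case, $\RR^{n^2}$ in the complex case). Since a spanning set of $m$ vectors in $\mathcal{K}$ exists only when $m\ge\dim\mathcal{K}$, I should first note that for $m<\dim\mathcal{K}$ there are no injective $m$-element frames and the statement about denseness is trivially vacuous (or one restricts attention to $m\ge\dim\mathcal{K}$, which is the interesting case). So assume $m\ge\dim\mathcal{K}$.

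First I would fix an arbitrary $m$-element frame $\mathcal{X}=\{x_k\}_{k=1}^m$ for $\HH^n$ and an $\varepsilon>0$, and show there is an injective $m$-element frame $\mathcal{Y}=\{y_k\}_{k=1}^m$ with $d(\mathcal{X},\mathcal{Y})<\varepsilon$. The key observation is that the map $x\mapsto\tilde{x}$ is continuous (each coordinate of $\tilde{x}$ is a quadratic polynomial in the coordinates of $x$), so small perturbations of the $x_k$ produce small perturbations of the $\tilde{x}_k$. The target condition, that $\{\tilde{y}_k\}_{k=1}^m$ spans $\mathcal{K}$, is an open and \emph{generic} condition: it fails precisely when all the $\dim\mathcal{K}\times\dim\mathcal{K}$ minors of the matrix whose rows are $\tilde{y}_1,\dots,\tilde{y}_m$ vanish, which is a proper algebraic subvariety of the parameter space $(\HH^n)^m$ (proper because the explicit constructions in Theorem~\ref{thm2} and its complex counterpart exhibit at least one injective frame, so the spanning condition is not identically false).

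The cleanest way to carry this out: by the constructions already in the paper there exists \emph{some} $m$-element injective frame $\mathcal{Z}=\{z_k\}_{k=1}^m$ (take a minimal injective frame of size $\dim\mathcal{K}$ and pad with extra frame vectors, e.g. copies of a basis, which only helps). Consider the one-parameter family $y_k(t)=(1-t)x_k+tz_k$ for $t\in[0,1]$. The function $t\mapsto\det$ of any fixed maximal-rank submatrix of $[\tilde{y}_1(t);\dots;\tilde{y}_m(t)]$ is a polynomial in $t$; choosing the submatrix that is nonzero at $t=1$ (which exists since $\mathcal{Z}$ is injective), this polynomial is not identically zero, hence has finitely many roots in $[0,1]$. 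Therefore for all sufficiently small $t>0$ this determinant is nonzero, so $\{\tilde{y}_k(t)\}_{k=1}^m$ spans $\mathcal{K}$ and $\mathcal{Y}(t)=\{y_k(t)\}_{k=1}^m$ gives injectivity by Theorem~\ref{finite real}; it is still a frame since frames form an open set and for small $t$ the frame bounds vary continuously (alternatively, any spanning set in $\HH^n$ is a frame, and injectivity forces spanning by the last Proposition of the previous subsection). Finally $d(\mathcal{X},\mathcal{Y}(t))^2=t^2\sum_k\|x_k-z_k\|^2\to0$ as $t\to0^+$, so we can make this less than $\varepsilon$. The same argument works verbatim in the complex case using the complex version of the $\tilde{x}$ construction and $\mathcal{K}=\RR^{n^2}$.

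The main obstacle is essentially bookkeeping rather than conceptual: one must make sure the perturbed family $\mathcal{Y}(t)$ is genuinely a \emph{frame} (not just a spanning family) — but this is automatic since any spanning set in a finite-dimensional space is a frame, and injective families span by the Proposition just proved. A secondary point to be careful about is the degenerate regime $m<\dim\mathcal{K}$, where no injective $m$-element frame exists at all; there the theorem should be read as vacuous, and it is worth a sentence to say so. Everything else — continuity of $x\mapsto\tilde x$, polynomiality of the relevant determinants in $t$, and the finiteness of the root set — is routine.
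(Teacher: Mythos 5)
Your proposal is correct and follows essentially the same route as the paper: both reduce the claim via the spanning characterization of injectivity (Theorem~\ref{finite real} and its complex analogue) to the non-vanishing of a determinant built from the $\tilde{x}_k$, and then invoke the genericity of that polynomial condition. The only difference is in execution: the paper first proves density for frames of exactly $\dim\mathcal{K}$ elements by citing that the complement of the zero set of a nontrivial polynomial is dense, and then pads a perturbed sub-collection, whereas you interpolate all $m$ vectors along a segment toward a known injective frame and use the finiteness of the root set of a one-variable polynomial; both are valid, and your version has the small merits of handling all $m\ge\dim\mathcal{K}$ in one step and of making explicit why the polynomial is not identically zero.
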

	\begin{proof}
		We will prove the real case. The complex case is similar.
		
		Let a frame $\{x_k\}_{k=1}^{\frac{n(n+1)}{2}}\subset \RR^n$ give injectivity. By Theorem \ref{finite real}, this is equivalent to the determinant of the matrix whose rows are $\tilde{x}_k$, $k=1, 2, \ldots, \frac{n(n+1)}{2}$ being non-zero.
		
		The determinant of this matrix is a polynomial of $\frac{n^2(n+1)}{2}$ variables $x_{ki}$ for $1 \leq k \leq \frac{n(n+1)}{2}$ and $1 \leq i \leq n$. Since the complement of the zero set of this polynomial is dense in $\RR^{\frac{n^2(n+1)}{2}}$, the set of all $\frac{n(n+1)}{2}$-element frames which give injectivity is dense in the space of all $\frac{n(n+1)}{2}$-element frames on $\RR^n$.
		
		Now let any $m$-element frame $\{x_k\}_{k=1}^m$ in $\RR^n$ with $m\geq \frac{n(n+1)}{2}$ and $\delta >0$. Then there exists a subframe containing $\frac{n(n+1)}{2}$ vectors. We can assume that this subframe is  $\{x_k\}_{k=1}^{\frac{n(n+1)}{2}}$. By denseness above, there is an injective frame $\{y_k\}_{k=1}^{\frac{n(n+1)}{2}}$ such that

		\[ \sum_{k=1}^{n(n+1)/2}\|x_k-y_k\|^2<\delta.\]

		Now define a new frame $\{\phi_k\}_{k=1}^m$, where $\phi_k=y_k$ for $k=1, \ldots, \frac{n(n+1)}{2}$ and $\phi_k=x_k$ for $k>\frac{n(n+1)}{2}$.
		Then the frame $\{\phi_k\}_{k=1}^{m}$ is injective and 
		
		\[ \sum_{k=1}^m\|x_k-\phi_k\|^2<\delta.\]
		
		The conclusion of the theorem then follows.
	\end{proof}
	\begin{remark}
		In the real case it is known that the complement of the zero set of a nontrivial polynomial of $n$ variables is dense in $\RR^n$. In the complex case, we see that given a polynomial $P(z_1,..., z_n)$ on $\CC^n$, we may write $P$ as $$P'(x_1,y_1,...,x_n,y_n) + \iota P''(x_1,y_1,...,x_n,y_n)$$ where $z_j = x_j + \iota y_j$. Hence $P'$ and $P''$ are polynomials on $\RR^{2n}$. $P$ has a zero if and only if $P'$ and $P''$ have a common zero. We see that the complement of the intersection of the zero sets of $P'$ and $P''$ is dense in $\RR^{2n}$ and hence is dense in $\CC^{n}$ after natural identification of $\RR^{2n}$ with $\CC^n$.
	\end{remark}
	
	\begin{theorem}
		The family of all $m$-element frames on $\HH^n$ that give injectivity in the
		frame quantum detection problem is open in the space of all $m$-element
		frames on $\HH^n$.
	\end{theorem}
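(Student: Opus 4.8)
The plan is to reduce, via the classification theorems already proved (Theorem \ref{finite real} and its complex analogue), the statement ``$\mathcal{X}$ gives injectivity'' to the purely linear-algebraic statement that an associated $m\times N$ matrix has full rank $N$, where $N=\binom{n+1}{2}$ in the real case and $N=n^2$ in the complex case, and then to observe that having full rank is an open condition: it is detected by the non-vanishing of one of finitely many polynomial minors, and those minors depend polynomially, hence continuously, on the frame vectors.

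First I would fix an $m$-element frame $\mathcal{X}=\{x_k\}_{k=1}^m$ for $\HH^n$ that gives injectivity, and let $M(\mathcal{X})$ be the $m\times N$ matrix whose $k$-th row is $\tilde{x}_k$ (using Definition \ref{def real finite} in the real case, Definition \ref{defn finite complex} in the complex case). By the relevant classification theorem, $\mathcal{X}$ gives injectivity precisely when $\{\tilde{x}_k\}_{k=1}^m$ spans $\mathcal{K}$, i.e. when $\rk M(\mathcal{X})=N$; since $m\ge N$ for any injective frame, this is equivalent to the existence of an $N$-element subset $S\subseteq\{1,\dots,m\}$ with $\det M_S(\mathcal{X})\ne 0$, where $M_S$ denotes the square submatrix on the rows indexed by $S$.

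Next I would note that each coordinate of $\tilde{x}$ is a polynomial in the coordinates of $x$ (a quadratic monomial $x_ix_j$ in the real case; one of $|x_i|^2,\ \R(\bar{x}_ix_j),\ \I(\bar{x}_ix_j)$, each a real quadratic in $\R x_i,\I x_i$, in the complex case), so for any fixed $S$ the function $\mathcal{Y}\mapsto \det M_S(\mathcal{Y})$ is a polynomial, in particular continuous, on $(\HH^n)^m$ with respect to the metric $d$. Choosing $S$ with $\det M_S(\mathcal{X})\ne 0$, continuity yields a $\delta>0$ so that $\det M_S(\mathcal{Y})\ne 0$ whenever $d(\mathcal{X},\mathcal{Y})<\delta$; for every such $\mathcal{Y}$ we then have $\rk M(\mathcal{Y})=N$, so $\{\tilde{y}_k\}_{k=1}^m$ spans $\mathcal{K}$ and $\mathcal{Y}$ gives injectivity, and in particular $\mathcal{Y}$ automatically spans $\HH^n$ (by the proposition that injective families span $\HH^n$), hence is a frame. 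Thus the injective $m$-element frames form an open set. Equivalently, and even more softly, $\mathcal{Y}\mapsto \rk M(\mathcal{Y})$ is lower semicontinuous, so $\{\rk M(\mathcal{Y})\ge N\}=\{\rk M(\mathcal{Y})=N\}$ is open.

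I do not expect a serious obstacle. The only points requiring care are (i) invoking the correct classification theorem and tilde-map definition in each of the real and complex cases, and keeping in mind that in the complex case $\tilde{x}$ lies in a real vector space, so the relevant minors are real polynomials in the real and imaginary parts of the coordinates; and (ii) recording that the perturbed tuple $\mathcal{Y}$ need not be assumed a priori to be a frame, since that follows from its injectivity via the span proposition, so the set is in fact open in $(\HH^n)^m$, not merely in the subspace of $m$-element frames.
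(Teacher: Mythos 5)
Your proposal is correct and takes essentially the same route as the paper: both reduce injectivity to the spanning of the tilde vectors via the classification theorems and then use that a suitable determinant is a polynomial, hence continuous, in the frame coordinates, so its nonvanishing persists under small perturbations. The paper organizes this by first treating $\frac{n(n+1)}{2}$-element frames and then extracting an injective subframe of that size from a general $m$-element injective frame, which corresponds exactly to your choice of an index set $S$ with $\det M_S(\mathcal{X})\neq 0$.
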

	\begin{proof}
		As above we will prove the real case and the complex
		case follows similarly.
		
		Denote by $\mathcal{F}$ the space of all $\frac{n(n+1)}{2}$-element frames for $\RR^n$. Consider the map:
		\begin{align*} f: \mathcal{F} &\longrightarrow \RR\\
		\mathcal{X}=\{x_k\}_{k=1}^{\frac{n(n+1)}{2}}&\longmapsto  f(\mathcal{X})=\det\{\tilde{x}_1, \tilde{x}_2,\ldots, \tilde{x}_{\frac{n(n+2)}{2}}\}.
		\end{align*}
		Then $f$ is a continuous function. Since $f^{-1}(0)$ is a closed set, by Theorem \ref{finite real}, the set of all $\frac{n(n+1)}{2}$-element frames is open in $\mathcal{F}$.
		
		Now let$\mathcal{X}=\{x_k\}_{k =1}^m$ in $\RR^n, (m\geq \frac{n(n+1)}{2})$ be an m-element frame which gives injectivity. Then there is a subframe $\mathcal{Y}$ containing $\frac{n(n+1)}{2}$ vectors, which is also injective. Therefore, there exists $\epsilon>0$ such that every $\frac{n(n+1)}{2}$-element frame in the ball $B(\mathcal{Y}, \epsilon)$ is injective. This implies that every $m$-element frame in the ball $B(\mathcal{X}, \epsilon)$ is also injective. The proof is now complete.
	\end{proof}

To show that the Parseval frames giving injectivity in the
quantum detection problem are dense in the Parseval frames,
we will first prove a very general problem about frames.

\begin{theorem}
Let $\mathcal{P}$ be a property of Hilbert space frames and assume:
\begin{enumerate}
\item The set of all $m$-element frames in $\HH^n$ having property $\mathcal{P}$
is dense in the set of all $m$-element frames.
\item If a frame $\{x_k\}_{k=1}^m$ with frame operator $S$ has property $\mathcal{P}$, then $\{S^{-1/2}x_k\}_{k=1}^m$ has 
property $\mathcal{P}$.
\end{enumerate}
Then the set of all $m$-element Parseval frames with property $\mathcal{P}$ is
dense in the set of all $m$-element Parseval frames. 
\end{theorem}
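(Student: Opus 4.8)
The plan is to reduce the statement to a soft continuity argument built around the canonical Parseval‑frame map. On the set $\mathcal{F}$ of all $m$-element frames for $\HH^n$ (which, in finite dimensions, is an open subset of $(\HH^n)^m$), define
\[ \Phi:\mathcal{F}\longrightarrow\mathcal{F},\qquad \Phi\big(\{x_k\}_{k=1}^m\big)=\{S^{-1/2}x_k\}_{k=1}^m, \]
where $S$ denotes the frame operator of $\{x_k\}_{k=1}^m$. As recalled in the introduction, $\Phi(\mathcal{X})$ is always a Parseval frame, and a frame is Parseval precisely when its frame operator is the identity; hence $\Phi$ restricts to the identity map on the set $\mathcal{PF}$ of all $m$-element Parseval frames. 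Hypothesis (2) is exactly the statement that $\Phi$ carries frames with property $\mathcal{P}$ to frames with property $\mathcal{P}$.

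The next step is to show that $\Phi$ is continuous on $\mathcal{F}$ with respect to the metric $d$. The frame operator $S=S(\mathcal{X})=\sum_{k=1}^m x_kx_k^*$ depends polynomially, hence continuously, on the vectors $x_k$; on $\mathcal{F}$ it is positive definite, and the map $A\mapsto A^{-1/2}$ is continuous (indeed smooth) on the open cone of positive definite operators on $\HH^n$, for instance via the holomorphic functional calculus, or because eigenvalues and eigenprojections vary continuously. Composing, $\mathcal{X}\mapsto S(\mathcal{X})^{-1/2}$ is continuous, and therefore so is $\mathcal{X}\mapsto\{S(\mathcal{X})^{-1/2}x_k\}_{k=1}^m$.

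Finally I would assemble the pieces. Fix a Parseval frame $\mathcal{X}=\{x_k\}_{k=1}^m$ and $\varepsilon>0$. Since $\Phi$ is continuous at $\mathcal{X}$ and $\Phi(\mathcal{X})=\mathcal{X}$, there is $\delta>0$ (we may shrink it so that the $\delta$-ball about $\mathcal{X}$ lies in the open set $\mathcal{F}$) such that $d(\mathcal{Y},\mathcal{X})<\delta$ forces $d(\Phi(\mathcal{Y}),\mathcal{X})<\varepsilon$. By hypothesis (1), choose an $m$-element frame $\mathcal{Y}$ with property $\mathcal{P}$ and $d(\mathcal{Y},\mathcal{X})<\delta$. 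Then $\Phi(\mathcal{Y})$ is an $m$-element Parseval frame, it has property $\mathcal{P}$ by (2), and $d(\Phi(\mathcal{Y}),\mathcal{X})<\varepsilon$. This produces a Parseval frame with property $\mathcal{P}$ within $\varepsilon$ of $\mathcal{X}$, which is the desired density.

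I expect the only point requiring genuine care to be the continuity of $A\mapsto A^{-1/2}$ on the positive definite operators, together with the remark that the frame operator stays invertible on a neighborhood of any frame (so that $\Phi$ is actually defined there); once these are in hand, the rest is the routine observation that a continuous map fixing the Parseval locus transports nearby approximants into nearby Parseval approximants.
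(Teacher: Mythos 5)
Your proposal is correct, and it takes a genuinely different (softer) route than the paper. The paper argues quantitatively: it fixes $\epsilon>0$, chooses $\delta$ with $2m\delta^2+8(m\delta)^2m(1+\delta)^2<\epsilon$ and $2m\delta<1$, shows that the frame operator $S_1$ of the perturbed frame $\{y_k\}$ satisfies $(1-m\delta)^2I\le S_1\le(1+m\delta)^2I$, deduces the explicit operator bound $\|I-S_1^{-1/2}\|\le 2m\delta$, and then estimates $\sum_k\|x_k-S_1^{-1/2}y_k\|^2$ directly by the triangle inequality. You replace all of these estimates by the single structural observation that the canonical-Parseval-frame map $\Phi(\mathcal{X})=\{S(\mathcal{X})^{-1/2}x_k\}$ is a continuous retraction of the (open) set of $m$-element frames onto the Parseval locus that preserves property $\mathcal{P}$ by hypothesis (2); density then follows formally from continuity at a fixed point. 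Your version is shorter and isolates the real mechanism (any continuous $\mathcal{P}$-preserving retraction onto the Parseval frames would do), at the cost of invoking continuity of $A\mapsto A^{-1/2}$ on the positive definite cone as a black box. The paper's version is self-contained --- its inequality $\|I-S_1^{-1/2}\|\le 2m\delta$ is in effect an explicit, quantitative proof of exactly that continuity along the relevant path --- and it yields an effective bound on how far the Parseval approximant moves, which a purely topological argument does not. One small point worth making explicit in your write-up: hypothesis (1) already guarantees that the approximant $\mathcal{Y}$ is a frame (density is asserted \emph{within} the set of frames), so you do not actually need to shrink $\delta$ to keep $\mathcal{Y}$ inside the open set $\mathcal{F}$, though doing so is harmless.
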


\begin{proof}
Fix $\epsilon >0$ and let $\delta>0$ so that
\[ 2m\delta^2 +8(m\delta)^2 m(1+\delta)^2<\epsilon, \ 2m\delta <1.\]
  Let
$\{x_k\}_{k=1}^m$ be any Parseval frame for $\HH^n$.  By denseness,
we can choose a frame $\{y_k\}_{k=1}^m$ having property $\mathcal{P}$
and satisfying $\|x_k-y_k\| \le \delta$, for all $k=1,2,\ldots,m$.
Since $\|x_k\|\le 1$, we have that $\|y_k\|\le 1+\delta$.
Let $S_1$ be the frame operator of $\{y_k\}_{k=1}^m$.  Then,
\begin{align*}
\langle S_1x,x\rangle^{1/2} &= \left ( \sum_{k=1}^m|\langle x,y_k\rangle|^2\right )^{1/2}\\
&\leq \left ( \sum_{k=1}^m|\langle x,x_k\rangle|^2\right )^{1/2}
+ \left ( \sum_{k=1}^m|\langle x,x_k-y_k\rangle|^2 \right )^{1/2}\\
&\leq \|x\|+ \|x\|\left (\sum_{k=1}^m\|x_k-y_k\|^2 \right )^{1/2}\\
&\leq \|x\|(1+m\delta).
\end{align*}
Therefore
\[ \sum_{k=1}^m|\langle x,y_k\rangle|^2 \le \|x\|^2(1+m\delta)^2.\]
Similarly,
\[ \sum_{k=1}^m|\langle x,y_k\rangle|^2 \ge \|x\|^2(1-m\delta)^2.\]
I.e. $(1-m\delta)^2 I \le S_1 \le (1+m\delta)^2 I$.  
Hence, $(1-m\delta)I \le S_1^{1/2}\le (1+m\delta)I$ and so
$(1+m\delta)^{-1}I \le S_1^{-1/2}\le (1-m\delta)^{-1}I$.
Finally,
\[ I-(1-m\delta)^{-1}I \le I-S_1^{-1/2} \le I-(1+m\delta)^{-1}I,\]
and so
\[ -2m\delta I \le \frac{-m\delta}{1-m\delta}I
\le I-S_1^{-1/2} \le \frac{m\delta}{1+m\delta}I \le 2m\delta I.\]
 Now, 
$\{S_1^{-1/2}y_k\}_{k=1}^m$ is a Parseval frame with property
$\mathcal{P}$ and
\begin{align*}
\sum_{k=1}^m\|x_k-S_1^{-1/2}y_k\|^2 &\le 2 \sum_{k=1}^m \|x_k-y_k\|^2+
2\sum_{k=1}^m\|(I-S_1^{-1/2})y_k\|^2\\
&\le 2m\delta^2 + 2\sum_{k=1}^m (2m\delta)^2 \|y_k\|^2\\\\
&\le 2m\delta^2 +8(m\delta)^2 m(1+\delta)^2< \epsilon.
\end{align*}  
\end{proof}

\begin{corollary}
The set of all $m$-element Parseval frames which give injectivity is dense in
the set of all $m$-element Parseval frames.
\end{corollary}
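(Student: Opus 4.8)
The plan is to obtain this corollary as an immediate instance of the general density theorem proved just above, taking for $\mathcal{P}$ the property ``the frame gives injectivity in the frame quantum detection problem.'' This is a property that makes sense for an arbitrary $m$-element frame on $\HH^n$ (not only Parseval ones), so it is a legitimate choice of $\mathcal{P}$, and it suffices to verify the two hypotheses of that theorem.

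First I would note that hypothesis (1) of the general theorem — that the $m$-element frames with property $\mathcal{P}$ are dense in all $m$-element frames on $\HH^n$ — is precisely the content of the density theorem established in the subsection ``The Solutions are Open and Dense.'' (Here one uses $m\ge \frac{n(n+1)}{2}$ in the real case, resp. $m\ge n^2$ in the complex case; for smaller $m$ there are no injective frames at all and hypothesis (1) simply fails, so the statement is understood for $m$ in this range.) Then I would check hypothesis (2): if $\{x_k\}_{k=1}^m$ has frame operator $S$ and gives injectivity, then its canonical Parseval frame $\{S^{-1/2}x_k\}_{k=1}^m$ also gives injectivity — this is exactly Corollary \ref{cor1} (itself a special case of the proposition that bounded invertible images of injective frames are injective, applied with $F=S^{-1/2}$).

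With both hypotheses in hand, the conclusion of the general theorem says that the set of $m$-element Parseval frames with property $\mathcal{P}$ is dense in the set of all $m$-element Parseval frames, which is exactly the assertion of the corollary. There is essentially no obstacle here beyond this bookkeeping: all the analytic work (the perturbation estimate comparing $\{S_1^{-1/2}y_k\}$ to $\{x_k\}$, and the polynomial-zero-set argument for denseness) has already been done in the two preceding theorems, and the only point requiring a moment's care is confirming that ``gives injectivity'' is stable under passing to the canonical Parseval frame, which Corollary \ref{cor1} supplies.
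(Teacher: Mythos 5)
Your proposal is correct and is exactly the argument the paper intends: the corollary is an immediate application of the general density theorem with $\mathcal{P}$ taken to be ``gives injectivity,'' where hypothesis (1) is the density theorem from the subsection on openness and density and hypothesis (2) is Corollary \ref{cor1}. Your added remark about the range of $m$ is a reasonable clarification but changes nothing of substance.
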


\subsection{Solution to the State Estimation Problem}

In this section we will give a classification of injective Parseval frames for which the state estimation problem is solvable. 

Recall that for an injective Parseval frame $\{x_k\}_{k\in I}$ and $\beta=2^I$, the map $\mathbb{M}$ which maps a quantum state $T\in \mathcal{S}(\HH)$ to a function $p\in L(\beta, \RR)$ is injective. Given a function $p\in L(\beta, \RR)$, if $p=\mathbb{M}(T)$ for some $T\in \mathcal{S}(\HH)$, then for any $U\in \beta$, we must have
\[p(U)=\mathbb{M}(T)(U)=\sum_{k\in U}\langle Tx_k, x_k\rangle=\sum_{k\in U}p(\{k\}).\]
Thus, $p$ must be additive and is determined by its value at the singleton sets $\{k\}$ for all $k\in I$. Therefore, for the state estimation problem in the finite case, we will ask:

\vskip10pt
\noindent {\bf The State Estimation Problem:} Given an injective Parseval frames $\{x_k\}_{k=1}^m$
	on $\HH^n$ and a measurement vector $a=(a_1,a_2,\ldots,a_m)\in \RR^m$, 
can we find a positive
self-adjoint trace one operator $T$ so that
\[\langle Tx_k,x_k\rangle= a_k, \mbox{ for all }k?\]

\begin{remark}
	We will not require the operator $T$ of the problem to be positive and trace one. This will be considered as a special case of the problem. Hence, we will say that  the state estimation problem is solvable if there exists a self-adjoint operator $T$ so that 
	\[ \langle Tx_k,x_k\rangle= a_k, \mbox{ for all }k.\]
\end{remark}


We will give a complete classification of injective Parseval frames for which the state estimation problem is solvable. Recall that for a vector $x\in \RR^n$, the vector $\tilde{x}$ is 
defined as in the Definition \ref{def real finite}.

\begin{theorem}\label{state finite}
	Let $\mathcal{X}=\{x_k\}_{k=1}^m$ be an injective Parseval frame for $\RR^n$, and $a=(a_1, a_2, \ldots, a_m)\in \RR^m$, the following are equivalent:
	\begin{enumerate}
		\item The state estimation problem is solvable.
		\item $\rk(A)=\rk(B)$, where $A$ is a matrix
		whose the $k$-row is $\tilde{x}_k$,
		and $B=[A, a]$. 	
	\end{enumerate}
\end{theorem}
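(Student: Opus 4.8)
The plan is to recognize that the state estimation problem, as reformulated, is nothing more than solving a linear system. By Corollary \ref{Coro2}, for any self-adjoint operator $T$ we have $\langle Tx_k,x_k\rangle = \langle \tilde T,\tilde x_k\rangle$, where $\tilde T\in\RR^{n(n+1)/2}$ is the vector associated to $T$ as in Definition \ref{def real finite}. Conversely, the map $T\mapsto\tilde T$ is a bijection between the space of self-adjoint operators on $\RR^n$ and $\RR^{n(n+1)/2}$ (with explicit inverse already used in the proof of Theorem \ref{finite real}: put $b_{ii}=(\tilde T)_{ii}$ and $b_{ij}=b_{ji}=\tfrac12(\tilde T)_{ij}$). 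Hence finding a self-adjoint $T$ with $\langle Tx_k,x_k\rangle=a_k$ for all $k$ is equivalent to finding a vector $y\in\RR^{n(n+1)/2}$ with $\langle y,\tilde x_k\rangle=a_k$ for all $k$, i.e. to solving the linear system $Ay=a$ where $A$ is the $m\times\tfrac{n(n+1)}{2}$ matrix whose $k$-th row is $\tilde x_k$.

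Once the problem is translated this way, the equivalence is exactly the Rouché–Capelli theorem: a linear system $Ay=a$ has a solution if and only if $\rk(A)=\rk([A,a])$. So the structure of the proof is:

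\begin{proof}
By Corollary \ref{Coro2}, for any self-adjoint operator $T$ on $\RR^n$ and any $k$ we have $\langle Tx_k,x_k\rangle=\langle \tilde T,\tilde x_k\rangle$, where $\tilde T\in\mathcal K=\RR^{n(n+1)/2}$. Moreover the correspondence $T\mapsto\tilde T$ is a linear bijection from the space of self-adjoint operators on $\RR^n$ onto $\mathcal K$: it is clearly linear and injective, and given any $y=(y_{11},y_{12},\ldots,y_{1n};\ldots;y_{nn})\in\mathcal K$ the operator $T=(b_{ij})$ with $b_{ii}=y_{ii}$ and $b_{ij}=b_{ji}=\tfrac12 y_{ij}$ for $i<j$ is self-adjoint with $\tilde T=y$ (the computation is exactly that in the proof of Theorem \ref{finite real}). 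Therefore the state estimation problem is solvable for the data $a$ if and only if there exists $y\in\mathcal K$ with
\[ \langle y,\tilde x_k\rangle = a_k,\ \mbox{ for all }k=1,2,\ldots,m,\]
that is, if and only if the linear system $Ay=a$ has a solution, where $A$ is the $m\times\frac{n(n+1)}{2}$ matrix whose $k$-th row is $\tilde x_k$. By the Rouché--Capelli theorem, this system is consistent if and only if $\rk(A)=\rk([A,a])=\rk(B)$. This proves $(1)\Leftrightarrow(2)$.
\end{proof}

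The main conceptual point — and essentially the only thing to check carefully — is that the reformulation is exact, namely that every vector in $\RR^{n(n+1)/2}$ arises as $\tilde T$ for a self-adjoint $T$, so that imposing no positivity or trace constraint (as the preceding Remark allows) makes the problem genuinely linear; after that the consistency criterion is the standard linear-algebra fact and there is no real obstacle. One should just be mindful that the theorem as stated drops the positivity/trace-one requirement on $T$ per the Remark, so the "state estimation problem is solvable" here means exactly solvability of $\langle Tx_k,x_k\rangle=a_k$ over self-adjoint $T$, which is what the argument above handles; the positive trace-one case is treated separately.
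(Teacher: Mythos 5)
Your proof is correct and is essentially the paper's argument: both reduce the problem to the consistency of a linear system whose coefficient matrix is (up to harmless column rescaling) $A$, and then invoke the rank criterion. The only difference is packaging — the paper writes the system directly in the unknowns $\langle Te_i,e_j\rangle$, $i\le j$, while you route it through the bijection $T\mapsto\tilde T$, which makes the coefficient matrix literally $A$ and is, if anything, slightly cleaner.
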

\begin{proof}
	Note that a self-adjoint operator $T$ is determined by the values $\langle Te_i, e_j\rangle$ for all $i\leq j$. Then the state estimation problem is solvable if and only if there exists a self-adjoint operator $T$ so that  
	\begin{align*} a_k&=\langle Tx_k,x_k\rangle\\
	&=\langle T(\sum_{i=1}^{n}\langle x_k, e_i\rangle e_i),\sum_{j=1}^{n}\langle x_k, e_j\rangle e_j\rangle\\ &=\sum_{i=1}^{n}\sum_{j=1}^{n}\langle x_k, e_i\rangle\langle x_k, e_j\rangle\langle Te_i, e_j\rangle\end{align*} for all  $k$.
	This is equivalent to the linear system with unknowns $\langle Te_i, e_j\rangle$:
	$$\sum_{i=1}^nx_{ki}^2\langle Te_i, e_i\rangle+2\sum_{i<j}x_{ki}x_{kj}\langle Te_i, e_j\rangle=a_k,\ k=1, 2, \ldots, m$$ having a solution, and hence is equivalent to $\rk(A)=\rk(B)$.
\end{proof}

In the case where the number of frame vectors equals $\frac{n(n+1)}{2}$, we have the following corollary.
\begin{corollary}
	Let $\mathcal{X}=\{x_k\}_{k=1}^{\frac{n(n+1)}{2}}\subset \RR^n$ be an injective Parseval frame. Then the state estimation problem has a unique solution for all choices of vectors $a=(a_1, a_2, \ldots, a_{\frac{n(n+1)}{2}}).$
\end{corollary}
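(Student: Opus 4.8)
The plan is to read the corollary off from Theorem~\ref{finite real} together with Theorem~\ref{state finite} by a pure dimension count. First I would invoke Theorem~\ref{finite real}: since $\mathcal{X}$ gives injectivity, the vectors $\{\tilde{x}_k\}_{k=1}^{n(n+1)/2}$ span $\mathcal{K}=\RR^{n(n+1)/2}$. But this is a spanning family of exactly $\dim\mathcal{K}$ vectors, so it is a basis of $\mathcal{K}$. Hence the matrix $A$ whose $k$-th row is $\tilde{x}_k$ is a square $\tfrac{n(n+1)}{2}\times\tfrac{n(n+1)}{2}$ invertible matrix, so $\rk(A)=\tfrac{n(n+1)}{2}$.

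For solvability, note that $B=[A,a]$ has only $\tfrac{n(n+1)}{2}$ rows, so $\rk(B)\le \tfrac{n(n+1)}{2}=\rk(A)\le\rk(B)$, giving $\rk(A)=\rk(B)$; Theorem~\ref{state finite} then yields a solution $T$. For uniqueness I would use Corollary~\ref{Coro2}: the assignment $T\mapsto\tilde{T}$ is a linear bijection from the self-adjoint operators on $\RR^n$ onto $\RR^{n(n+1)/2}$ (it is clearly injective and both spaces have dimension $\tfrac{n(n+1)}{2}$), and under it the equations $\langle Tx_k,x_k\rangle=a_k$ become $\langle\tilde{T},\tilde{x}_k\rangle=a_k$ for all $k$, i.e. $A\tilde{T}=a$ viewing $\tilde{T}$ and $a$ as column vectors. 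Since $A$ is invertible this system has the unique solution $\tilde{T}=A^{-1}a$, and pulling back through the bijection gives a unique self-adjoint $T$ solving the state estimation problem.

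I do not expect any genuine obstacle here; the only point requiring a little care is that the coefficient matrix appearing in the linear system of Theorem~\ref{state finite} carries the factors of $2$ coming from the off-diagonal entries, whereas $A$ as defined has rows $\tilde{x}_k$ without those factors --- but rescaling columns does not change the rank, and phrasing everything through the $\tilde{T}$ of Definition~\ref{def real finite} and Corollary~\ref{Coro2} sidesteps the bookkeeping entirely. I would also remark that the Parseval hypothesis is not actually used: injectivity together with the cardinality $m=\tfrac{n(n+1)}{2}$ already forces $A$ to be invertible, which is all that is needed.
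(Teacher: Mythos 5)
Your argument is correct and follows the same route as the paper: Theorem~\ref{finite real} forces $\{\tilde{x}_k\}$ to be a basis of $\RR^{n(n+1)/2}$, hence $A$ is invertible, $\rk(A)=\rk(B)$, and Theorem~\ref{state finite} gives solvability, with uniqueness coming from the invertibility of the system in the $\tilde{T}$ coordinates. Your extra observations (the harmless column factors of $2$, and that the Parseval hypothesis is not needed) are accurate; the paper's own proof is just a terser version of the same dimension count.
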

\begin{proof}
	By Theorem \ref{finite real}, $\mathcal{X}$ is injective is equivalent to $\{\tilde{x}_k\}_{k=1}^{\frac{n(n+1)}{2}}$ is linearly independent. Hence 
	$$\rk A=\rk B=\frac{n(n+1)}{2}.$$
	The conclusion then follows by Theorem \ref{state finite}.
\end{proof}
For the completeness of the state estimation problem, we will state the classification in the case that the operator $T$ is required to be positive, self-adjoint operator of trace one. First, we need to recall the following theorem. 
\begin{theorem}
	A self-adjoint matrix $T$ is positive if and only if all of its principal minors are nonnegative.
\end{theorem}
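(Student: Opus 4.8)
The plan is to prove both implications, with the forward direction being routine and the converse resting on the relationship between the principal minors of $T$ and the coefficients of its characteristic polynomial. Throughout, ``positive'' means positive semidefinite, consistent with the definition of $Sym^+(\HH)$, so the statement is the one requiring \emph{all} principal minors (not merely the leading ones) to be nonnegative.

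\textbf{Forward direction.} Suppose $T \ge 0$. Fix a subset $S \subseteq \{1,\ldots,n\}$ and let $T_S$ be the corresponding principal submatrix. For any vector $y$ whose support is contained in $S$, regarded as a vector in $\HH^n$, one has $\langle Ty, y\rangle = \langle T_S\, y|_S,\, y|_S\rangle$, so $T \ge 0$ forces $T_S \ge 0$. A positive semidefinite matrix has all eigenvalues nonnegative, hence nonnegative determinant, so the principal minor $\det T_S \ge 0$. Since $S$ was arbitrary, every principal minor of $T$ is nonnegative.

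\textbf{Converse.} Suppose every principal minor of $T$ is nonnegative. Since $T$ is self-adjoint, all its eigenvalues $\lambda_1,\ldots,\lambda_n$ are real, so it suffices to show none of them is negative. Consider $p(\lambda) = \det(\lambda I + T) = \prod_{i=1}^n (\lambda + \lambda_i) = \sum_{k=0}^n e_k(\lambda_1,\ldots,\lambda_n)\,\lambda^{\,n-k}$, where $e_k$ denotes the $k$-th elementary symmetric polynomial. It is classical (and follows by expanding $\det(\lambda I + T)$ using multilinearity of the determinant in its columns) that $e_k(\lambda_1,\ldots,\lambda_n)$ equals the sum of all $k \times k$ principal minors of $T$; by hypothesis this is nonnegative for $1 \le k \le n$, and $e_0 = 1$. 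Hence for every $\lambda > 0$ we get $p(\lambda) = \lambda^n + \sum_{k=1}^n e_k(\lambda_1,\ldots,\lambda_n)\,\lambda^{\,n-k} > 0$, so $-\lambda$ is not a root of $p$ and therefore not an eigenvalue of $T$. Thus $T$ has no negative eigenvalue, and being self-adjoint it is positive.

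The only nontrivial point — the main obstacle — is the identity that the coefficients of the characteristic polynomial are the sums of principal minors; I would either cite a standard linear algebra reference for it or include the short column-expansion argument. An alternative route via induction on $n$ and Cauchy interlacing is possible (at most one eigenvalue can be negative once all $(n-1)\times(n-1)$ principal submatrices are PSD), but it requires separate care in the singular case $\det T = 0$, which makes the characteristic-polynomial argument the cleaner choice.
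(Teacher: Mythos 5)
Your proof is correct, and both directions are handled properly: the restriction argument for the forward implication and the identity $\det(\lambda I + T)=\sum_{k=0}^{n}e_k(\lambda_1,\dots,\lambda_n)\,\lambda^{n-k}$, with $e_k$ equal to the sum of the $k\times k$ principal minors, for the converse. There is nothing in the paper to compare against: the authors state this as a recalled classical fact and give no proof, so your argument --- which is the standard textbook one --- fills a gap the paper deliberately leaves open. Your closing remark is also apt: the interlacing route does require extra care when $\det T=0$ (the leading principal minors alone do not suffice in the semidefinite case, e.g.\ $\mathrm{diag}(0,-1)$), so the characteristic-polynomial argument is indeed the cleaner choice here.
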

Now we have the following classification: 
\begin{theorem}
	Let $\mathcal{X}=\{x_k\}_{k=1}^m$ be an injective Parseval frame for $\RR^n$, and $a=(a_1, a_2, \ldots, a_m)\in \RR^m$, the following are equivalent:
	\begin{enumerate}
		\item The state estimation problem is solvable for a positive, self-adjoint operator of trace one.
		\item The linear system 
		$$\sum_{i=1}^nx_{ki}^2\langle Te_i, e_i\rangle+2\sum_{i<j}x_{ki}x_{kj}\langle Te_i, e_j\rangle=a_k,\ k=1, 2, \ldots, m$$ has a solution $\{\langle Te_i, e_j\rangle: i\le j\}$, which determines a self-adjoint matrix $T$ such that all of its principal minors are nonnegative, and $\sum_{i=1}^n\langle Te_i, e_i\rangle=1$.	
	\end{enumerate}
\end{theorem}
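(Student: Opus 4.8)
The plan is to reduce the positive, trace-one state estimation problem to the general (self-adjoint, no constraints) state estimation problem of Theorem \ref{state finite}, and then layer on the two extra requirements. First I would observe, exactly as in the proof of Theorem \ref{state finite}, that a self-adjoint operator $T$ on $\RR^n$ is completely determined by the $\frac{n(n+1)}{2}$ real numbers $\langle Te_i,e_j\rangle$ for $i\le j$, and that the equations $\langle Tx_k,x_k\rangle=a_k$ for $k=1,2,\ldots,m$ are precisely the linear system
\[ \sum_{i=1}^n x_{ki}^2\langle Te_i,e_i\rangle + 2\sum_{i<j}x_{ki}x_{kj}\langle Te_i,e_j\rangle = a_k, \quad k=1,2,\ldots,m, \]
whose coefficient matrix is $A$ (the matrix with rows $\tilde x_k$). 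So the set of all self-adjoint $T$ with $\langle Tx_k,x_k\rangle = a_k$ is an affine subspace of the space of self-adjoint matrices, parametrized by the solution set of this linear system.

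Next I would impose the two side conditions. The trace-one requirement $\sum_{i=1}^n\langle Te_i,e_i\rangle=1$ is one additional linear equation on the same unknowns $\{\langle Te_i,e_j\rangle : i\le j\}$, so the set of self-adjoint trace-one $T$ solving the measurement equations is still an affine subset of the parameter space, cut out by the augmented linear system. The positivity requirement is the only non-linear one; here I would invoke the stated theorem that a self-adjoint matrix is positive if and only if all of its principal minors are nonnegative. Thus $T\ge 0$ is equivalent to finitely many polynomial inequalities in the entries $\langle Te_i,e_j\rangle$. Putting these together: the state estimation problem is solvable for a positive, self-adjoint, trace-one operator if and only if the linear system above, together with the equation $\sum_i\langle Te_i,e_i\rangle=1$, has a solution $\{\langle Te_i,e_j\rangle: i\le j\}$ for which all principal minors of the resulting matrix $T$ are nonnegative — which is exactly statement (2). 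Each implication is then essentially a tautology once the dictionary between operators and tuples of entries is set up: (1)$\Rightarrow$(2) takes the witnessing operator and reads off its entries; (2)$\Rightarrow$(1) takes the tuple, forms the self-adjoint matrix $T$, and checks it is positive (principal minors), trace one (the extra equation), and satisfies the measurement equations (the linear system), hence is the desired quantum state.

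The bulk of the argument is bookkeeping; there is no real obstacle, since the hard analytic content (injectivity $\Leftrightarrow$ spanning, the reduction to the linear system, the principal-minor characterization of positivity) is already available. The one point requiring a little care is making sure the "solution $\{\langle Te_i,e_j\rangle : i\le j\}$" in statement (2) is understood to determine $T$ via $\langle Te_j,e_i\rangle = \langle Te_i,e_j\rangle$ for $i<j$ (self-adjointness is automatic from the parametrization), so that "all of its principal minors are nonnegative" is a well-defined condition; I would state this explicitly. I would also remark that the hypothesis that $\mathcal{X}$ is an injective Parseval frame is not actually needed for the equivalence itself — it is carried over only to match the framing of the state estimation problem, where injectivity of $\mathbb{M}$ guarantees the recovered $T$ is unique — and that $\rk(A)=\frac{n(n+1)}{2}$ in the injective case forces the linear part to have a unique solution, so the only real question in that case is whether that unique $T$ happens to be positive and trace one.
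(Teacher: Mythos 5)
Your proposal is correct and follows exactly the route the paper intends: the paper states this theorem without proof, treating it as immediate from the reduction to the linear system in the unknowns $\langle Te_i,e_j\rangle$ ($i\le j$) carried out in the proof of Theorem \ref{state finite}, combined with the quoted principal-minor characterization of positivity and the trace condition as one extra linear equation. Your added care about the symmetric completion $\langle Te_j,e_i\rangle=\langle Te_i,e_j\rangle$ and the observation that injectivity forces uniqueness of the candidate $T$ are both accurate.
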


\begin{remark}
	All of the theorems above still hold for the complex case with the corresponding $\tilde{x}_k$, defined as in Definition \ref{defn finite complex}. We state one of them here, the other are similar to the real case. 
\end{remark}
\begin{theorem}
	Let $\mathcal{X}=\{x_k\}_{k=1}^m$ be an injective Parseval frame for $\CC^n$, and $a=(a_1, a_2, \ldots, a_m)\in \RR^m$, the following are equivalent:
	\begin{enumerate}
		\item The state estimation problem is solvable.
		\item $\rk(A)=\rk(B)$, where $A$ is a matrix
		whose the $k$-row is $\tilde{x}_k$,
		and $B=[A, a]$. 	
	\end{enumerate}
\end{theorem}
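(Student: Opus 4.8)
The plan is to reduce the complex case to a linear‑algebra statement exactly as was done in the real case in Theorem~\ref{state finite}, the only difference being the bookkeeping of the entries of a self‑adjoint matrix over $\CC$. First I would recall that a self‑adjoint operator $T=(a_{ij})$ on $\CC^n$ is completely determined by the real parameters $\{a_{ii}\}_{i=1}^n$ together with $\{\R(a_{ij}),\I(a_{ij})\}_{i<j}$ — a total of $n^2$ real unknowns — since $a_{ji}=\bar a_{ij}$. This is precisely the list of coordinates packaged in the vector $\tilde T$ of Definition~\ref{defn finite complex}.

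Next I would expand $\langle Tx_k,x_k\rangle$ as in the proof of the complex classification theorem: writing $x_k=\sum_i \langle x_k,e_i\rangle e_i$ and using self‑adjointness of $T$, one gets
\[
\langle Tx_k,x_k\rangle = \sum_{i=1}^n a_{ii}|x_{ki}|^2 + 2\sum_{1\le i<j\le n}\bigl(\R(a_{ij})\R(\bar x_{ki}x_{kj}) - \I(a_{ij})\I(\bar x_{ki}x_{kj})\bigr) = \langle \tilde T,\tilde x_k\rangle,
\]
which is the identity already recorded in the complex injectivity theorem. Hence the state estimation equations $\langle Tx_k,x_k\rangle=a_k$ for $k=1,\dots,m$ are exactly the linear system $A\,\tilde T = a$, where $A$ is the $m\times n^2$ matrix whose $k$-th row is $\tilde x_k$ and $\tilde T\in\RR^{n^2}$ is the vector of real unknowns.

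Finally I would invoke the standard Rouché–Capelli criterion: the linear system $A\xi=a$ has a solution $\xi\in\RR^{n^2}$ if and only if $\rk(A)=\rk([A,a])=\rk(B)$. Since every real vector $\xi\in\RR^{n^2}$ corresponds (via the inverse of the map $T\mapsto\tilde T$) to a genuine self‑adjoint operator $T$ on $\CC^n$, solvability of the system is equivalent to solvability of the state estimation problem. This gives the stated equivalence. I do not expect any genuine obstacle here; the only thing requiring care is making sure the coefficient of the off‑diagonal real and imaginary parts in the expansion matches the factor‑of‑two convention built into $\tilde x_k$ and $\tilde T$, so that the bilinear pairing $\langle\tilde T,\tilde x_k\rangle$ reproduces $\langle Tx_k,x_k\rangle$ on the nose — exactly the computation already carried out in the complex injectivity proof, which may simply be cited.
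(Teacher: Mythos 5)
Your proposal is correct and follows essentially the same route as the paper: expand $\langle Tx_k,x_k\rangle$ in the canonical basis to get $\langle \tilde T,\tilde x_k\rangle$, identify the state estimation equations with the linear system whose coefficient matrix has rows $\tilde x_k$ and whose unknowns are the $n^2$ real parameters of $T$, and apply the rank criterion for solvability. Your explicit remark about matching the factor-of-two convention between $\tilde T$ and $\tilde x_k$ is exactly the bookkeeping point the paper's computation settles, so there is nothing to add.
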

\begin{proof}
	In the complex case, a self-adjoint operator $T$ is determined by the values of the real part and imaginary part of $\langle Te_j, e_i\rangle$ for all $i\leq j$. Then the state estimation problem is solvable if and only if there exists a self-adjoint operator $T$ so that  
	\begin{align*} a_k&=\langle Tx_k,x_k\rangle\\
	&=\langle T(\sum_{i=1}^{n}\langle x_k, e_i\rangle e_i),\sum_{j=1}^{n}\langle x_k, e_j\rangle e_j\rangle\\ &=\sum_{i=1}^{n}\sum_{j=1}^{n}\langle x_k, e_i\rangle\overline{\langle x_k, e_j\rangle}\langle Te_i, e_j\rangle\\
	&=\sum_{i=1}^n|x_{ki}|^2\langle Te_i, e_i\rangle+2\sum_{i<j}[\R(\bar{x}_{ki}x_{kj})\R\langle Te_j, e_i\rangle-\I(\bar{x}_{ki}x_{kj})\I\langle Te_j, e_i\rangle]\end{align*} for all  $k$.

	This is equivalent to the following linear system:
	$$\sum_{i=1}^n|x_{ki}|^2\langle Te_i, e_i\rangle+2\sum_{i<j}[\R(\bar{x}_{ki}x_{kj})\R\langle Te_j, e_i\rangle-\I(\bar{x}_{ki}x_{kj})\I\langle Te_j, e_i\rangle]=a_k,$$ $k=1, 2, \ldots, m$ with unknowns $\R\langle Te_j, e_i\rangle,  \I\langle Te_j, e_i\rangle, i\leq j$ having a solution, and hence is equivalent to $\rk(A)=\rk(B)$.
\end{proof}

\begin{remark}
If a frame $\{x_k\}_{k=1}^m$ has $m> \frac{n(n+1)}{2}$ in the
real case, or $m >n^2$ in the complex case, because of redundancy,
it is unlikely the state estimation is solvable.  However, in this
case there is a natural way to find the best estimate for the
problem.  We consider the real case. Note that there always exists a subset $I\subset \{1, 2, \ldots, m\}$ of size $\frac{n(n+1)}{2}$, and a self-adjoint operator $T$  so that $\langle Tx_k, x_k\rangle=a_k$, for all $k\in I$. Therefore, if the state estimation problem is not solvable, it is natural to find such $T$ so that the the distance to the measurement vector $a$:
$$\sum_{k=1}^{m}|\langle Tx_k, x_k\rangle-a_k|^2$$ is minimum. 

To do this,
let $\mathcal{S}$ be the set of all bases of $\RR^{\frac{n(n+1)}{2}}$ that are subsets of $\{\tilde{x}_k\}_{k=1}^m.$ This set is obviously finite. Since each element $\{\tilde{x}_k\}_{k\in I}$ in $\mathcal{S}$ determines a unique self-adjoint operator $T$ satisfying $\langle Tx_k, x_k\rangle=a_k$, for all $k\in I$, we can find the quantum state $T$ that gives the best approximation to the measurement vector $a$ by choosing the set which minimizes the
distance above.

\end{remark}

\section{The Solution for the Infinite Dimensional Case}

 In infinite dimensions we will work with 
 both the trace class operators and the Hilbert Schmidt
 operators.  I.e.  Operators $T=(a_{ij})_{i,j=1}^{\infty}$ with
 $\sum_{i,j=1}^{\infty}|a_{ij}|^2 < \infty.$  This class contains
 the trace class operators.
As in the finite case, we will solve the following frame injectitivity problem:
\vskip10pt
\noindent {\bf Injectivity Problem}:  For what frames
$\{x_k\}_{k=1}^{\infty}$ in real or complex infinite dimensional Hilbert space $\HH$ do we
have the property:  Whenever $T, S$ are Hilbert Schmidt positive self-adjoint 
operators 
on $\HH$ and $\langle Tx_k,x_k\rangle =\langle Sx_k,x_k\rangle$, for all $k=1,2,\ldots$,
then $T=S$.
\vskip10pt

\begin{remark}
	We will not require our operators to be trace class and trace one. These requirements will be considered as a special case of our problem.
\end{remark}

\subsection{The Solution to the Injectivity Problem}
 In this subsection we will solve the injectivity problem for infinite
dimensional Hilbert spaces.  
Similar to the finite case, we first show that we only need to work with self adjoint operators. Note that the proof ``(1) implies (2)" of Theorem \ref{positivefinite} is not true for the infinite case. So we will give another proof here. The other implications are as in the finite case.
	
	\begin{theorem}\label{TH}
		Given a family of vectors $\mathcal{X}=\{x_k\}_{k=1}^\infty$ in a real or complex Hilbert space $\HH$, the following are equivalent:
		\begin{enumerate}
			\item Whenever $T, S$ are Hilbert Schmidt, positive and self-adjoint, and
			\[ \langle Tx_k,x_k\rangle = \langle Sx_k,x_k\rangle,
			\mbox{ for all k},\]
			then $T=S$.
			\item Whenever $T, S$ are Hilbert Schmidt self-adjoint, and
			\[ \langle Tx_k,x_k\rangle = \langle Sx_k,x_k\rangle,
			\mbox{ for all }k,\]
			then $T=S$.
			\item $\mathcal{X}$ is injective.
		\end{enumerate}
	\end{theorem}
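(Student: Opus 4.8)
The plan is to prove the chain $(1)\Rightarrow(2)\Rightarrow(3)\Rightarrow(1)$, noting that the implications $(2)\Rightarrow(3)$ and $(3)\Rightarrow(1)$ go through verbatim as in the finite case (Theorem \ref{positivefinite}): for $(2)\Rightarrow(3)$ take $S=0$, and for $(3)\Rightarrow(1)$ observe that if $T,S$ are Hilbert Schmidt positive self-adjoint with $\langle Tx_k,x_k\rangle=\langle Sx_k,x_k\rangle$ for all $k$, then $T-S$ is Hilbert Schmidt and self-adjoint (the difference of two Hilbert Schmidt operators is Hilbert Schmidt), so injectivity forces $T-S=0$. The only real work is $(1)\Rightarrow(2)$, and the paper has already flagged that the finite-dimensional trick — shifting by a multiple of the identity — fails, since a self-adjoint Hilbert Schmidt operator $T$ on an infinite dimensional space need not have $\inf_{\|x\|=1}\langle Tx,x\rangle$ attained by a nonzero shift keeping the operator Hilbert Schmidt (indeed $T-mI$ is never Hilbert Schmidt unless $m=0$, since $I$ is not Hilbert Schmidt). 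So a genuinely different argument is needed.

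First I would let $T$ be a Hilbert Schmidt self-adjoint operator with $\langle Tx_k,x_k\rangle=0$ for all $k$ (this is exactly the statement we must derive from (1), via the reduction of $(2)$ to showing injectivity on self-adjoint operators — cf. the Remark following Theorem \ref{positivefinite}, two self-adjoint $T,S$ agree iff $T-S$ annihilates all $\langle \cdot x_k,x_k\rangle$). Since $T$ is compact and self-adjoint, by the spectral theorem write $T=\sum_{j}\lambda_j\langle\cdot,f_j\rangle f_j$ with $\{f_j\}$ an orthonormal system and $\lambda_j\in\RR$, $\sum_j\lambda_j^2<\infty$. Split $T=T^+-T^-$ into its positive and negative parts, $T^+=\sum_{\lambda_j>0}\lambda_j\langle\cdot,f_j\rangle f_j$ and $T^-=\sum_{\lambda_j<0}(-\lambda_j)\langle\cdot,f_j\rangle f_j$; both are Hilbert Schmidt (the squared Hilbert Schmidt norms add up) and positive self-adjoint. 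Then for every $k$,
\[
\langle T^+x_k,x_k\rangle=\langle T^-x_k,x_k\rangle,
\]
because $\langle Tx_k,x_k\rangle=\langle T^+x_k,x_k\rangle-\langle T^-x_k,x_k\rangle=0$. Now apply hypothesis (1) to the pair of Hilbert Schmidt positive self-adjoint operators $T^+,T^-$ to conclude $T^+=T^-$. Since $T^+$ and $T^-$ have orthogonal ranges (they are supported on the complementary spectral subspaces), $T^+=T^-$ forces $T^+=T^-=0$, hence $T=0$. That proves $(1)\Rightarrow(2)$.

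The main obstacle — and the one subtle point to get right — is the claim that the positive and negative parts $T^\pm$ of a Hilbert Schmidt self-adjoint operator are again Hilbert Schmidt and positive self-adjoint; this is where compactness (automatic for Hilbert Schmidt operators) is essential, as it guarantees a genuine spectral decomposition into a discrete family of eigenvalues and lets us define $T^\pm$ by literally keeping the positive/negative eigenvalues. The identity $\|T^+\|_{HS}^2+\|T^-\|_{HS}^2=\|T\|_{HS}^2<\infty$ then makes the membership in $L^2(\HH)$ immediate. Everything after that is the same two easy implications as in finite dimensions, so once the decomposition step is in place the argument closes. I would also remark in passing that, exactly as in the finite case, this reduces the injectivity problem to a statement purely about self-adjoint Hilbert Schmidt operators, which is the form used in the subsequent classification.
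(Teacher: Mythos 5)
Your proposal is correct and follows essentially the same route as the paper: the paper also reduces to $R=T-S$ and, via the eigenbasis, writes $R=UD_1U^*-UD_2U^*$ with $D_1=|D|$ and $D_2=|D|-D$, i.e.\ a difference of two positive Hilbert Schmidt operators, which is just a cosmetic variant of your $T=T^+-T^-$ decomposition. The key point in both arguments is identical — compactness gives a spectral decomposition, square-summability of the eigenvalues keeps the positive and negative parts Hilbert Schmidt, and hypothesis (1) then forces the two positive pieces to coincide.
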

	\begin{proof}
		We will show that (1) implies (2). Let $T, S$ be Hilbert Schmidt self-adjoint operators such that 	\[ \langle Tx_k,x_k\rangle = \langle Sx_k,x_k\rangle,
		\mbox{ for all }k.\] Set $R=T-S$. Then $R$ is also a Hilbert Schmidt  self-adjoint operator. Let $\{e_j\}_{j=1}^\infty$ be an orthonormal basis for $\HH$ and let $\{u_j\}_{j=1}^\infty$ be an eigenbasis for $R$
		with respective eigenvalues $\{\lambda_j\}_{j=1}^\infty$. Define operators $U$ and $D$ on $\HH$ by $Ue_j=u_j$ and $De_j=\lambda_je_j$, for $j=1, 2,\ldots$. Then $U$ is a unitary operator,  $D$ is Hilbert Schmidt self-adjoint operator, and
		$$R=UDU^*.$$ Now let $r_j=|\lambda_j|, s_j=|\lambda_j|-\lambda_j, j=1, 2, \ldots$ 
be non-negative numbers.  Then $\lambda_j=r_j-s_j$. Let $D_1, D_2$ be operators defined by 
		$$D_1e_j=r_je_j, \ D_2e_j=s_je_j \mbox{  for } j=1, 2, \ldots.$$ Note that since $R$ is Hilbert Schmidt, $\sum_{j=1}^{\infty}\lambda_j^2$ converges. Hence
		$D_1, D_2$ are Hilbert-Schmidt positive self-adjoint and we have
		$$R=UDU^*=U(D_1-D_2)U^*=UD_1U^*-UD_2U^*.$$
		Moreover, $UD_1U^*,\ UD_2U^*$ are Hilbert Schmidt positive self-adjoint operators.
		Since 
		$$0=\langle Rx_k, x_k\rangle=\langle UD_1U^*x_k, x_k\rangle-\langle UD_2U^*x_k, x_k\rangle,$$ 
		we have that $UD_1U^*=UD_2U^*$. Thus, $R=0$ and hence $T=S$.
	\end{proof}
	If our operators are trace class, then we will have the following theorem. The proof of  Theorem \ref{TH} is still valid here by noticing that $\sum_{j=1}^{\infty}|\lambda_j|<\infty$ for the trace class operator $R$.
	\begin{theorem}
		Given a family of vectors $\mathcal{X}=\{x_k\}_{k=1}^\infty$ in a infinite dimensional Hilbert space $\HH$, the following are equivalent:
		\begin{enumerate}
			\item Whenever $T, S$ are trace class positive and self-adjoint, and
			\[ \langle Tx_k,x_k\rangle = \langle Sx_k,x_k\rangle,
			\mbox{ for all k},\]
			then $T=S$.
			\item Whenever $T, S$ are trace class self-adjoint, and
			\[ \langle Tx_k,x_k\rangle = \langle Sx_k,x_k\rangle,
			\mbox{ for all k},\]
			then $T=S$.
			\item Whenever $T$ is trace class self-adjoint, and
			\[ \langle Tx_k,x_k\rangle = 0,
			\mbox{ for all k},\]
			then $T=0$.
		\end{enumerate}
	\end{theorem}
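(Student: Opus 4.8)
The plan is to follow the proof of Theorem~\ref{TH} essentially verbatim, changing only the class in which the spectral pieces of $R=T-S$ are shown to live: trace class in place of Hilbert--Schmidt. The two easy cyclic implications go as in Theorem~\ref{positivefinite}. For $(2)\Rightarrow(3)$ take $S=0$. For $(3)\Rightarrow(1)$, if $T,S$ are positive self-adjoint trace class operators with $\langle Tx_k,x_k\rangle=\langle Sx_k,x_k\rangle$ for all $k$, then $R=T-S$ is trace class (the trace class operators form a vector space), self-adjoint, and satisfies $\langle Rx_k,x_k\rangle=0$ for all $k$; hence $R=0$ by $(3)$, i.e. $T=S$.

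For $(1)\Rightarrow(2)$, given trace class self-adjoint $T,S$ with $\langle Tx_k,x_k\rangle=\langle Sx_k,x_k\rangle$ for all $k$, set $R=T-S$, which is trace class and self-adjoint. By the spectral theorem for compact self-adjoint operators there is an orthonormal eigenbasis $\{u_j\}_{j=1}^\infty$ for $R$ with real eigenvalues $\{\lambda_j\}_{j=1}^\infty$, and since $R$ is trace class we have $\sum_{j=1}^\infty|\lambda_j|<\infty$. Fix an orthonormal basis $\{e_j\}_{j=1}^\infty$ of $\HH$, let $U$ be the unitary with $Ue_j=u_j$, and let $D$ be the diagonal operator $De_j=\lambda_je_j$, so that $R=UDU^*$. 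Write $\lambda_j=r_j-s_j$ with $r_j=|\lambda_j|$ and $s_j=|\lambda_j|-\lambda_j$; both sequences are nonnegative and, since $0\le r_j,s_j\le 2|\lambda_j|$, both are summable. Hence the diagonal operators $D_1,D_2$ given by $D_1e_j=r_je_j$ and $D_2e_j=s_je_j$ are positive, self-adjoint, and trace class, and so are their unitary conjugates $UD_1U^*$ and $UD_2U^*$, with $R=UD_1U^*-UD_2U^*$.

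Now I would finish exactly as in Theorem~\ref{TH}: for each $k$,
\[ 0=\langle Rx_k,x_k\rangle=\langle UD_1U^*x_k,x_k\rangle-\langle UD_2U^*x_k,x_k\rangle,\]
so the two positive self-adjoint trace class operators $UD_1U^*$ and $UD_2U^*$ produce the same measurements on $\mathcal{X}$; by hypothesis $(1)$ they are equal, whence $R=0$ and $T=S$. There is no real obstacle here: the only point that differs from the Hilbert--Schmidt argument is the verification that $\sum_j r_j$ and $\sum_j s_j$ converge (not merely $\sum_j r_j^2$ and $\sum_j s_j^2$), which is immediate from trace-class summability of $\{\lambda_j\}$; one should also note in passing that conjugation by a unitary preserves both positivity and the trace norm, which is standard.
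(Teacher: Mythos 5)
Your proof is correct and is essentially the paper's own argument: the authors simply remark that the proof of Theorem \ref{TH} carries over verbatim once one notes that $\sum_{j}|\lambda_j|<\infty$ for the trace class operator $R=T-S$, which is exactly the adjustment you make to show $D_1$ and $D_2$ are trace class. Nothing further is needed.
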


	Similar to the finite case, we will first give the following classification of injectivity for Hilbert Schmidt operators. 
	
	\begin{theorem}
		Let $\mathcal{X}=\{x_k\}_{k=1}^\infty$ be a frame for an infinite dimensional real or complex Hilbert space  $\HH$.  The following
		are equivalent:
		\begin{enumerate}
			\item $\mathcal{X}$ is injective. 
			\item For every orthonormal basis 
			$\mathcal{E}=\{e_j\}_{j=1}^\infty$ for $\HH$
			we have:
			\[ H(\mathcal{E})=:\overline{\spn}\{(|\langle x_k,e_1\rangle|^2,|\langle x_k,e_2\rangle|^2,
			\ldots ):k=1,2,\ldots\}=\ell_2.\] 
		\end{enumerate}
	\end{theorem}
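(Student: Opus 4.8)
The plan is to mimic the finite-dimensional argument (the last theorem of Section 2), passing to an eigenbasis of the operator $T$ and reading off the condition as an orthogonality relation against the span of the ``squared coordinate'' vectors, but now being careful about convergence in $\ell_2$ rather than finite sums. The only genuinely new difficulty is that, in the direction $(2)\Rightarrow(1)$, a Hilbert–Schmidt self-adjoint operator $T$ has an eigenbasis with eigenvalue sequence $\{\lambda_j\}\in\ell_2$, so the vector $\lambda=(\lambda_1,\lambda_2,\ldots)$ genuinely lies in $\ell_2$, and I must check that $\langle Tx_k,x_k\rangle=\sum_j\lambda_j|\langle x_k,e_j\rangle|^2$ is an honest (absolutely convergent) $\ell_2$-inner product rather than just a formal one. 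This follows because $(|\langle x_k,e_j\rangle|^2)_{j}\in\ell_1\subseteq\ell_2$ with $\ell_1$-norm $\|x_k\|^2\le B$, so the pairing with $\lambda\in\ell_2$ converges. In the direction $(1)\Rightarrow(2)$ I likewise need that a sequence $\lambda\in\ell_2$ orthogonal to $H(\mathcal E)$ defines a bona fide Hilbert–Schmidt operator via $Te_j=\lambda_je_j$, which it does precisely because $\sum_j|\lambda_j|^2<\infty$.

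For $(1)\Rightarrow(2)$ I would argue the contrapositive. Suppose some orthonormal basis $\mathcal E=\{e_j\}_{j=1}^\infty$ has $H(\mathcal E)\ne\ell_2$. Since $H(\mathcal E)$ is a closed subspace of $\ell_2$, there is a nonzero $\lambda=(\lambda_1,\lambda_2,\ldots)\in\ell_2$ with $\lambda\perp H(\mathcal E)$, i.e. $\sum_{j=1}^\infty\lambda_j|\langle x_k,e_j\rangle|^2=0$ for every $k$. Define $T$ by $Te_j=\lambda_je_j$; then $T$ is self-adjoint and Hilbert–Schmidt (its Hilbert–Schmidt norm squared is $\sum_j|\lambda_j|^2<\infty$), it is nonzero since $\lambda\ne 0$, and for each $k$,
\[
\langle Tx_k,x_k\rangle=\Bigl\langle T\sum_j\langle x_k,e_j\rangle e_j,\ \sum_i\langle x_k,e_i\rangle e_i\Bigr\rangle=\sum_{j=1}^\infty\lambda_j|\langle x_k,e_j\rangle|^2=0,
\]
so $\mathcal X$ is not injective, contradicting (1).

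For $(2)\Rightarrow(1)$, let $T$ be Hilbert–Schmidt and self-adjoint with $\langle Tx_k,x_k\rangle=0$ for all $k$. By the spectral theorem for compact self-adjoint operators there is an orthonormal eigenbasis $\mathcal E=\{e_j\}_{j=1}^\infty$ with eigenvalues $\lambda_j$, and $\sum_j\lambda_j^2=\|T\|_{HS}^2<\infty$, so $\lambda:=(\lambda_j)_j\in\ell_2$. As observed above, for each $k$ the sequence $(|\langle x_k,e_j\rangle|^2)_j$ lies in $\ell_1$ with $\ell_1$-norm $\|x_k\|^2$, hence the series $\sum_j\lambda_j|\langle x_k,e_j\rangle|^2$ converges absolutely to $\langle Tx_k,x_k\rangle=0$. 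Thus $\lambda\perp(|\langle x_k,e_j\rangle|^2)_j$ in the $\ell_2$ pairing for every $k$, so $\lambda\perp H(\mathcal E)$; but $H(\mathcal E)=\ell_2$ by (2), forcing $\lambda=0$, i.e. $T=0$. Hence $\mathcal X$ is injective.

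The main obstacle, as indicated, is not conceptual but the bookkeeping of convergence: one must verify that every bilinear expression written above — the expansion of $\langle Tx_k,x_k\rangle$ in the eigenbasis, and the identification of $H(\mathcal E)^\perp$ with eigenvalue sequences of Hilbert–Schmidt operators — is legitimate in $\ell_2$. The key enabling estimate is the Bessel bound $\sum_j|\langle x_k,e_j\rangle|^2=\|x_k\|^2\le B$, which places the coordinate-square sequences in $\ell_1$ uniformly and makes the $\ell_2$ pairings with eigenvalue sequences converge; everything else parallels the finite-dimensional proof verbatim.
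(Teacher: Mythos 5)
Your proposal is correct and follows essentially the same route as the paper: the contrapositive for $(1)\Rightarrow(2)$ via the diagonal operator $Te_j=\lambda_je_j$ built from $\lambda\perp H(\mathcal E)$, and the eigenbasis expansion of a Hilbert--Schmidt $T$ for $(2)\Rightarrow(1)$, using $\sum_j|\lambda_j|^2<\infty$ to place the eigenvalue sequence in $\ell_2$. The extra convergence bookkeeping you supply is sound but does not change the argument.
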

	\begin{proof}
		$(1)\Rightarrow (2)$:
		We prove the result by way of contradiction.  Suppose that (2) is false. Then there is an
		orthonormal basis $\mathcal{E}=\{e_j\}_{j=1}^\infty$ so that
		$H(\mathcal{E})\not= \ell_2$.  Hence there is a non-zero vector
		$\lambda=(\lambda_1,\lambda_2,\ldots)\in \ell_2$ such that $\lambda\perp H(\mathcal{E})$.
		
		Define an operator on $\HH$ by
		\[ Te_j = \lambda_je_j, \mbox{ for all }j=1, 2, \ldots.\]
		Then $T$ is a non-zero Hilbert Schmidt operator.  We also have:  
		$\langle Tx_k,x_k\rangle =0$, for all $k=1, 2,\ldots$. This is a contradiction.
		
		$(2)\Rightarrow (1)$:
		Let $T$ be a Hilbert Schmidt self-adjoint operator such that  $$\langle Tx_k,x_k\rangle =0, \mbox{ for all }k.$$
		Since $T$ is Hilbert Schmidt and hence compact, there
		is an eigenbasis $\mathcal{E}= \{e_j\}_{j=1}^\infty$ for $T$
		with respective eigenvalues $\{\lambda_j\}_{j=1}^\infty$.  Then
		for every $k=1,2,\ldots$, we have
		\[ \langle Tx_k,x_k\rangle = \sum_{j=1}^\infty\lambda_j|\langle x_k,
		e_j\rangle|^2 =0.\]
		
		Since $T$ is Hilbert Schmidt then 
		$$\sum_{j=1}^{\infty}|\lambda_j|^2=\sum_{j=1}^{\infty}\|Te_j\|^2<\infty.$$
		That is, $(\lambda_1, \lambda_2, \ldots)\in \ell_2$. 
		Since \[ (\lambda_1,\lambda_2,\ldots) \perp H(\mathcal{E}) = \ell_2 \mbox{ by assumption (2)}.\]
		Therefore, $\lambda_j=0$ for all $j=1,2,\ldots,$
		and so $T=0$.
	\end{proof}

	If we consider operators which are trace class, then we have the following classification for the infinite dimensions.
	
	\begin{theorem}
		Let $\mathcal{X}=\{x_k\}_{k=1}^\infty$ be a frame for an infinite dimensional real or complex Hilbert space  $\HH$.  The following
		are equivalent:
		\begin{enumerate}
			\item If $T$ is a trace class self-adjoint operator such that
			$$\langle Tx_k, x_k\rangle=0, \mbox{ for all } k,$$ then $T=0$.
			\item For every $\lambda =(\lambda_1, \lambda_2, \ldots)\in \ell_1$ and for every orthonormal basis 
			$\{e_j\}_{j=1}^\infty$ for $\HH$, if $\sum_{j=1}^\infty\lambda_j|\langle x_k,
			e_j\rangle|^2 =0$ for all $k$ then $\lambda=0$.
		\end{enumerate}
	\end{theorem}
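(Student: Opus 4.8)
The plan is to mirror the proof of the Hilbert–Schmidt classification established just above, replacing $\ell_2$ by $\ell_1$ and "Hilbert Schmidt" by "trace class" throughout. The two facts that make this work are standard properties of the trace norm $\|T\|_1 = \tr(|T|)$ recalled in the preliminaries: first, a diagonal operator $e_j \mapsto \lambda_j e_j$ (with real coefficients) is self-adjoint and trace class precisely when $(\lambda_j)_{j=1}^\infty \in \ell_1$; second, a trace class self-adjoint operator is compact, hence by the spectral theorem for compact self-adjoint operators admits an orthonormal eigenbasis, and its sequence of eigenvalues lies in $\ell_1$ since $\sum_j |\lambda_j| = \tr(|T|) < \infty$.

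For $(1)\Rightarrow(2)$ I would argue by contraposition. Suppose (2) fails: there is an orthonormal basis $\mathcal{E} = \{e_j\}_{j=1}^\infty$ for $\HH$ and a non-zero $\lambda = (\lambda_j)_{j=1}^\infty \in \ell_1$ with $\sum_{j=1}^\infty \lambda_j |\langle x_k, e_j\rangle|^2 = 0$ for all $k$. Define $T$ on $\HH$ by $Te_j = \lambda_j e_j$. Then $T$ is a well-defined self-adjoint trace class operator, and $T \neq 0$ since $\lambda \neq 0$. Expanding $x_k = \sum_j \langle x_k, e_j\rangle e_j$ gives $\langle Tx_k, x_k\rangle = \sum_j \lambda_j |\langle x_k, e_j\rangle|^2 = 0$ for every $k$, contradicting (1).

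For $(2)\Rightarrow(1)$, let $T$ be a trace class self-adjoint operator with $\langle Tx_k, x_k\rangle = 0$ for all $k$. Since $T$ is compact and self-adjoint there is an orthonormal eigenbasis $\mathcal{E} = \{e_j\}_{j=1}^\infty$ for $T$ with real eigenvalues $\{\lambda_j\}_{j=1}^\infty$, and since $T$ is trace class, $(\lambda_j)_{j=1}^\infty \in \ell_1$. Then for each $k$ we have $0 = \langle Tx_k, x_k\rangle = \sum_{j=1}^\infty \lambda_j |\langle x_k, e_j\rangle|^2$, so hypothesis (2), applied to this basis $\mathcal{E}$ and this $\lambda$, forces $\lambda_j = 0$ for all $j$, whence $T = 0$.

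The argument is essentially routine once these facts are in place; the only point that needs care is the passage between "trace class operator" and "$\ell_1$ eigenvalue sequence" in both directions — in $(1)\Rightarrow(2)$ one needs that an $\ell_1$ diagonal yields an admissible (trace class) test operator, and in $(2)\Rightarrow(1)$ one needs that trace class forces the eigenvalue sequence into $\ell_1$ so that hypothesis (2) is applicable. I do not anticipate a genuine obstacle.
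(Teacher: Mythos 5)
Your proposal is correct and follows essentially the same route as the paper: contraposition via the diagonal operator $Te_j=\lambda_je_j$ (checked to be trace class because $\sum_j|\lambda_j|<\infty$) for $(1)\Rightarrow(2)$, and the spectral decomposition of a compact self-adjoint trace class operator with $\ell_1$ eigenvalue sequence for $(2)\Rightarrow(1)$. The only cosmetic difference is that the paper verifies $|T|e_j=|\lambda_j|e_j$ explicitly where you cite it as standard.
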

	\begin{proof}
		$(1)\Rightarrow (2)$:
		We prove the result by way of contradiction.  Suppose that (2) is false. Then there is an $\lambda=(\lambda_1, \lambda_2, \ldots)\in \ell_1$ and an
		orthonormal basis $\{e_j\}_{j=1}^\infty$ so that $\sum_{j=1}^\infty\lambda_j|\langle x_k,
		e_j\rangle|^2 =0$ for all $k$ but $\lambda\not=0$.
		
		Define an operator on $\HH$ by
		\[ Te_j = \lambda_je_j, \mbox{ for all }j=1, 2, \ldots.\]
		Then $T$ is a non-zero self-adjoint operator. Moreover, 
		$$|T|e_j=\sqrt{TT^*}e_j=|\lambda_j|e_j, \mbox{ for all }j.$$ Therefore, 
		$$\sum_{j=1}^{\infty}\langle |T|e_j, e_j\rangle=\sum_{j=1}^{\infty}|\lambda_j|<\infty.$$ Thus, $T$ is a non-zero trace class self-adjoint operator. Moreover, we have that  
		$\langle Tx_k,x_k\rangle =\sum_{j=1}^\infty\lambda_j|\langle x_k,
		e_j\rangle|^2 =0$, for all $k=1, 2,\ldots$. This is a contradiction.
		
		$(2)\Rightarrow (1)$:
		Let $T$ be a trace class self-adjoint operator such that  $$\langle Tx_k,x_k\rangle =0, \mbox{ for all }k.$$
		Since $T$ is trace class and hence compact, there
		is an eigenbasis $ \{e_j\}_{j=1}^\infty$ for $T$
		with respective eigenvalues $\{\lambda_j\}_{j=1}^\infty$.  Then
		for every $k=1,2,\ldots$, we have
		\[ \sum_{j=1}^\infty\lambda_j|\langle x_k,
		e_j\rangle|^2=\langle Tx_k,x_k\rangle = 0, \mbox{ for all }k.\]
		
		Since $T$ is trace class then 
		$$\sum_{j=1}^{\infty}|\lambda_j|=\sum_{j=1}^{\infty}|\langle Te_j, e_j\rangle|<\infty.$$
		That is, $\lambda=(\lambda_1, \lambda_2, \ldots)\in \ell_1$. By assumption (2) we get $\lambda=0$ and hence $T=0$.	
	\end{proof}
	
	Finally, by normalizing the trace, we can give a classification for the Injectivity problem if we require further that our operators are trace one. First, we need to justisfy Theorem \ref{TH} so that we can use it for this case.
	
	\begin{theorem}
		Given a family of vectors $\mathcal{X}=\{x_k\}_{k=1}^\infty$ in the real or complex Hilbert space $\HH$, the following are equivalent:
		\begin{enumerate}
			\item Whenever $T, S$ are trace class positive and self-adjoint of trace one, and
			\[ \langle Tx_k,x_k\rangle = \langle Sx_k,x_k\rangle,
			\mbox{ for all k},\]
			then $T=S$.
			\item Whenever $T, S$ are trace class self-adjoint of trace one, and
			\[ \langle Tx_k,x_k\rangle = \langle Sx_k,x_k\rangle,
			\mbox{ for all k},\]
			then $T=S$.
			\item Whenever $T$ is trace class self-adjoint of trace zero, and
			\[ \langle Tx_k,x_k\rangle = 0,
			\mbox{ for all }k,\]
			then $T=0$.
		\end{enumerate}
	\end{theorem}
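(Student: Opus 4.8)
The plan is to establish the cycle $(1)\Rightarrow(2)\Rightarrow(3)\Rightarrow(1)$. The implications $(2)\Rightarrow(3)$ and $(3)\Rightarrow(1)$ are formal; the real content is $(1)\Rightarrow(2)$, which I would prove by reusing the positive--negative splitting from the proof of Theorem \ref{TH}, adding one normalization step to stay inside the trace-one cone.

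For $(3)\Rightarrow(1)$: if $T,S$ are trace class positive self-adjoint of trace one with $\langle Tx_k,x_k\rangle=\langle Sx_k,x_k\rangle$ for all $k$, then $R:=T-S$ is trace class self-adjoint of trace zero and $\langle Rx_k,x_k\rangle=0$ for all $k$, so $(3)$ gives $R=0$, i.e. $T=S$. For $(2)\Rightarrow(3)$: given $T$ trace class self-adjoint of trace zero with $\langle Tx_k,x_k\rangle=0$ for all $k$, fix the rank-one projection $P=e_1e_1^*$, which is trace class positive self-adjoint of trace one. Then $T+P$ and $P$ are trace class self-adjoint of trace one and $\langle (T+P)x_k,x_k\rangle=\langle Px_k,x_k\rangle$ for all $k$, so $(2)$ forces $T+P=P$, i.e. $T=0$.

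The heart of the matter is $(1)\Rightarrow(2)$. Assuming $(1)$, let $T,S$ be trace class self-adjoint of trace one with $\langle Tx_k,x_k\rangle=\langle Sx_k,x_k\rangle$ for all $k$, and set $R=T-S$, a trace class self-adjoint operator with $\tr(R)=0$. Being compact and self-adjoint, $R$ has an orthonormal eigenbasis $\{u_j\}_{j=1}^\infty$ with real eigenvalues $\{\lambda_j\}_{j=1}^\infty$, and $\sum_j|\lambda_j|=\|R\|_1<\infty$. As in Theorem \ref{TH}, write $\lambda_j=r_j-s_j$ with $r_j=|\lambda_j|\ge0$, $s_j=|\lambda_j|-\lambda_j\ge0$, and define positive operators $P_1,P_2$ by $P_1u_j=r_ju_j$, $P_2u_j=s_ju_j$; both are trace class since $\sum_j r_j,\sum_j s_j\le 2\sum_j|\lambda_j|<\infty$, and $R=P_1-P_2$. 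Evaluating traces in the eigenbasis, $\tr(P_1)=\sum_j r_j=\sum_j|\lambda_j|$ and $\tr(P_2)=\sum_j s_j=\sum_j|\lambda_j|-\sum_j\lambda_j=\sum_j|\lambda_j|$, where the rearrangement is valid by absolute summability and $\sum_j\lambda_j=\tr(R)=0$. Denote this common value by $c\ge0$. If $c=0$ then every $\lambda_j=0$, so $R=0$ and $T=S$. If $c>0$, put $\widetilde P_i=c^{-1}P_i$; these are trace class positive self-adjoint of trace one, and since $\langle Rx_k,x_k\rangle=0$ gives $\langle P_1x_k,x_k\rangle=\langle P_2x_k,x_k\rangle$, we get $\langle \widetilde P_1x_k,x_k\rangle=\langle \widetilde P_2x_k,x_k\rangle$ for all $k$. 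Hypothesis $(1)$ then yields $\widetilde P_1=\widetilde P_2$, hence $P_1=P_2$, hence $R=0$ and $T=S$. Finally $(2)\Rightarrow(1)$ follows since positive operators form a subclass of self-adjoint ones, completing the cycle.

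The one place requiring care is this last implication: one must check that $P_1,P_2$ are genuinely trace class, that the trace identity $\tr(P_2)=\tr(P_1)$ is legitimate (it hinges on absolute summability of $\{\lambda_j\}$, which the trace class hypothesis supplies), and that the degenerate case $c=0$ is separated out before dividing by $c$. Everything else is a direct transcription of arguments already present in the paper.
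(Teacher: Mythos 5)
Your proof is correct and follows essentially the same route as the paper: the formal implications $(3)\Rightarrow(1)$ and $(2)\Rightarrow(3)$ (with the same rank-one perturbation $e_1e_1^*$), and for $(1)\Rightarrow(2)$ the same positive--negative eigenvalue splitting of $R=T-S$ normalized to trace one. The only difference is cosmetic: the paper inflates the first eigenvalue pair by $1$ so the common trace $A=1+\sum_j|\lambda_j|$ is automatically positive, whereas you keep the bare splitting and separate out the degenerate case $c=0$ before dividing --- both handle the issue correctly.
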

	\begin{proof}
		$(1)\Rightarrow (2)$:
		Let $T, S$ be trace class self-adjoint operators of trace one such that 	\[ \langle Tx_k,x_k\rangle = \langle Sx_k,x_k\rangle,
		\mbox{ for all }k.\] Set $R=T-S$ then $R$ is a trace class self-adjoint operator of trace zero. Let $\{e_j\}_{j=1}^\infty$ be an orthonormal basis for $\HH$ and let $\{u_j\}_{j=1}^\infty$ be an eigenbasis for $R$
		with respective eigenvalues $\{\lambda_j\}_{j=1}^\infty$. Then $\sum_{j=1}^{\infty}\lambda_j=0$. Define operators $U$ and $D$ on $\HH$ by $Ue_j=u_j$ and $De_j=\lambda_je_j$, for $j=1, 2,\ldots$. Then $U$ is an unitary operator and $D$ is a trace class self-adjoint operator of trace zero, and
		$$R=UDU^*.$$ Now define non-negative numbers 
		
		$$r_1=\dfrac{1+|\lambda_1|}{A}, s_1=\dfrac{1+|\lambda_1|-\lambda_1}{A}, r_j=\dfrac{|\lambda_j|}{A}, s_j=\dfrac{|\lambda_j|-\lambda_j}{A}, j= 2,3, \ldots,$$
		where
		$$A=1+\sum_{j=1}^{\infty}|\lambda_j|=1+\sum_{j=1}^{\infty}|\lambda_j|-\sum_{j=1}^{\infty}\lambda_j>0$$ then $\lambda_j=r_j-s_j$ for all $j$. Let $D_1, D_2$ be operators defined by 
		$$D_1e_j=r_je_j, \ D_2e_j=s_je_j \mbox{  for } j=1, 2, \ldots.$$
		Then
		$D_1, D_2$ are trace class positive self-adjoint of trace one and we have
		$$R=UDU^*=U(D_1-D_2)U^*=UD_1U^*-UD_2U^*.$$
		Moreover, $UD_1U^*,UD_2U^*$ are trace class positive self-adjoint operators of trace one.
		Since 
		$$0=\langle Rx_k, x_k\rangle=\langle UD_1U^*x_k, x_k\rangle-\langle UD_2U^*x_k, x_k\rangle,$$ then $UD_1U^*=UD_2U^*$. Thus, $R=0$ and hence $T=S$.

		\vskip 10pt
		$(2)\Rightarrow (3)$: Let $T$ be any trace class operator of trace zero such that
		$$\langle Tx_k, k_k\rangle=0 \mbox{ for all }k.$$
		Define an operator $S$ on $\HH$ by 
		$$Se_1=e_1, Se_j=0, \mbox{ for } j=2, 3,\ldots.$$
		Then $S$ and $T+S$ are trace class self-adjoint operators of trace one.
		
		Since $\langle (T+S)x_k, x_k\rangle=\langle Sx_k, x_k\rangle$ for all $k$,  $T+S=S$ and hence $T=0$.
		
		\vskip 10pt
		$(3)\Rightarrow (1)$: Let $T, S$ are trace class positive self-adjoint operators of trace one 
		such that
		$$\langle Tx_k,x_k\rangle =\langle Sx_k, x_k\rangle \mbox{ for all  }k.$$
		Then $\langle (T-S)x_k, x_k\rangle=0$ for all $k$. Since $T-S$ is a trace class seft-adjoint operator of trace zero, $T=S$ by $(3)$.
	\end{proof}
	
	Now we are ready to give a classification for the Injectivity problem for operators of trace one. First, we need a definition.
	\begin{definition}
		We define a subspace of the real space $\ell_1$ as follows:
		\[W:=\{(\lambda_1, \lambda_2, \ldots)\in \ell_1: \sum_{j=1}^{\infty}\lambda_j=0\}.\]
	\end{definition}

	\begin{theorem}
		Let $\mathcal{X}=\{x_k\}_{k=1}^\infty$ be a frame for an infinite dimensional real or complex Hilbert space  $\HH$.  The following
		are equivalent:
		\begin{enumerate}
			\item If $T$ is a trace class self-adjoint operator of trace zero such that
			$$\langle Tx_k, x_k\rangle=0, \mbox{ for all } k,$$ then $T=0$.
			\item For every $\lambda =(\lambda_1, \lambda_2, \ldots)\in W$ and for every orthonormal basis 
			$\{e_j\}_{j=1}^\infty$ for $\HH$, if $\sum_{j=1}^\infty\lambda_j|\langle x_k,
			e_j\rangle|^2 =0$ for all $k$ then $\lambda=0$.
		\end{enumerate}
	\end{theorem}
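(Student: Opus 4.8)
The plan is to mimic, almost verbatim, the proof of the preceding theorem (the trace-class classification of injectivity), adding only the bookkeeping needed to track the trace-zero condition via the subspace $W$. Both directions rest on one observation: a self-adjoint trace class operator $T$ and a diagonal operator built from its eigenvalues are interchangeable, because the eigenvalues of $T$ form a real $\ell_1$-sequence whose sum equals $\tr(T)$, and conversely any real $\ell_1$-sequence produces, via a fixed orthonormal basis, a self-adjoint trace class operator with that trace.

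For $(1)\Rightarrow(2)$ I would argue by contradiction. Suppose (2) fails, so there is an orthonormal basis $\{e_j\}_{j=1}^\infty$ and a nonzero $\lambda=(\lambda_1,\lambda_2,\ldots)\in W$ with $\sum_{j=1}^\infty \lambda_j |\langle x_k,e_j\rangle|^2=0$ for every $k$. Define $T$ by $Te_j=\lambda_j e_j$. Since the $\lambda_j$ are real, $T$ is self-adjoint; since $|T|e_j=|\lambda_j|e_j$ and $\lambda\in\ell_1$, we get $\tr(|T|)=\sum_j|\lambda_j|<\infty$, so $T$ is trace class; and $\tr(T)=\sum_j\lambda_j=0$ because $\lambda\in W$. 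Finally $\langle Tx_k,x_k\rangle=\sum_j\lambda_j|\langle x_k,e_j\rangle|^2=0$ for all $k$, while $T\neq0$, contradicting (1).

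For $(2)\Rightarrow(1)$ let $T$ be a trace class self-adjoint operator of trace zero with $\langle Tx_k,x_k\rangle=0$ for all $k$. Being trace class it is compact, so it admits an eigenbasis $\{e_j\}_{j=1}^\infty$ with real eigenvalues $\{\lambda_j\}_{j=1}^\infty$. Then $\sum_j|\lambda_j|=\tr(|T|)<\infty$, so $\lambda:=(\lambda_1,\lambda_2,\ldots)\in\ell_1$, and $\sum_j\lambda_j=\tr(T)=0$, so $\lambda\in W$. Expanding $x_k$ in this eigenbasis gives $\sum_j\lambda_j|\langle x_k,e_j\rangle|^2=\langle Tx_k,x_k\rangle=0$ for all $k$, so hypothesis (2) forces $\lambda=0$, that is $T=0$.

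I do not expect a genuine obstacle here; the only points requiring care are the standard spectral facts for trace class self-adjoint operators (real eigenvalues, $\ell_1$-summability, trace equal to the eigenvalue sum) and checking that the trace-zero constraint corresponds exactly to membership in $W$ on both sides of the equivalence. As in the earlier theorems, one should note that the argument works verbatim in the real and complex cases, since the operators produced are diagonal with real entries and hence self-adjoint in either setting.
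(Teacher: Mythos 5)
Your proof is correct and follows essentially the same route as the paper: the contrapositive via the diagonal operator $Te_j=\lambda_je_j$ for $(1)\Rightarrow(2)$, and the eigenbasis expansion with the observation that the eigenvalue sequence lies in $W$ for $(2)\Rightarrow(1)$. The extra verifications you include (that $|T|e_j=|\lambda_j|e_j$ gives trace class, and that trace zero corresponds to membership in $W$) match what the paper records here and in the preceding trace-class theorem.
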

	\begin{proof}
		$(1)\Rightarrow (2)$:
		We prove the contrapositive.  Suppose that (2) is false. Then there is an $\lambda=(\lambda_1, \lambda_2, \ldots)\in W$ and an
		orthonormal basis $\{e_j\}_{j=1}^\infty$ so that $\sum_{j=1}^\infty\lambda_j|\langle x_k,
		e_j\rangle|^2 =0$ for all $k$ but $\lambda\not=0$.
		
		Define an operator on $\HH$ by
		\[ Te_j = \lambda_je_j, \mbox{ for all }j=1, 2, \ldots.\]
		Then $T$ is a non-zero trace class self-adjoint operator of trace zero. Moreover, we have that  
		$\langle Tx_k,x_k\rangle =\sum_{j=1}^\infty\lambda_j|\langle x_k,
		e_j\rangle|^2 =0$, for all $k=1, 2,\ldots$. This is a contradiction.
		
		\vskip 10pt
		$(2)\Rightarrow (1)$:
		Let $T$ be a trace class self-adjoint operator of trace zero such that  $$\langle Tx_k,x_k\rangle =0, \mbox{ for all }k.$$
		Let $ \{e_j\}_{j=1}^\infty$ be an eigenbasis for $T$
		with respective eigenvalues $\{\lambda_j\}_{j=1}^\infty$.  Then
		for every $k=1,2,\ldots$, we have
		\[ \sum_{j=1}^\infty\lambda_j|\langle x_k,
		e_j\rangle|^2=\langle Tx_k,x_k\rangle = 0, \mbox{ for all }k.\]
		
		Since $T$ is trace class, 
		$$\sum_{j=1}^{\infty}|\lambda_j|=\sum_{j=1}^{\infty}|\langle Te_j, e_j\rangle|<\infty.$$
		Moreover, $\sum_{j=1}^{\infty}\lambda_j=0$. Thus, $\lambda=(\lambda_1, \lambda_2, \ldots)\in W$. By assumption (2) we get $\lambda=0$ and hence $T=0$.
	\end{proof}

From now on, 
we will also work in the direct sum of infinitely many copies of
$\ell_2$.

\begin{definition}
	Denote by $\tilde{\HH}$ the direct sum of the real Hilbert spaces $\ell_2$:
	\[ \tilde{\HH}=\left ( \sum_{i=1}^{\infty}\oplus \ell_2 \right )_{\ell_2}.\]
	To avoid confusion with earlier notation, a vector in this direct
	sum will be written in the form:
	\[ \vec{x}=(\vec{x}_1, \vec{x}_2, \ldots, \vec{x}_n, \ldots )	,\]
	and we have
	\[ \langle \vec{x},\vec{y}\rangle = \sum_{i=1}^{\infty}
	\langle \vec{x}_i,\vec{y}_i\rangle.\]
\end{definition}

We also need the following lemma for both the real and complex cases.
\begin{lemma}\label{lem100}
	Let $A=(a_{ij})_{i,j=1}^\infty$ be a real or complex infinite matrix such that $\sum_{i, j=1}^{\infty}|a_{ij}|^2< \infty$. Then the operator $T_A$ defined in $\ell_2$ by 
	$$T_A(x_1,x_2, \ldots)=(y_1,y_2,\ldots),$$ where
	$$y_i=\sum_{j=1}^{\infty}a_{ij}x_j, i=1, 2, \ldots,$$ is a bounded operator. Moreover, $T_A$ is self-adjoint if and only if $a_{ji}=\bar{a}_{ij}$ for all $i, j$.
\end{lemma}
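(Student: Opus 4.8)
The plan is to verify boundedness by a direct Cauchy--Schwarz estimate and then read off the self-adjointness criterion from the matrix entries of $T_A$ in the canonical basis. First I would check that $T_A$ is well-defined: for each fixed $i$, the row $(a_{ij})_{j=1}^\infty$ lies in $\ell_2$ since $\sum_{j}|a_{ij}|^2 \le \sum_{i,j}|a_{ij}|^2 < \infty$, so $y_i = \sum_j a_{ij}x_j = \langle x, \overline{(a_{ij})_j}\rangle$ converges for every $x\in\ell_2$. Then I would estimate, for $x=(x_1,x_2,\ldots)\in\ell_2$,
\[
\|T_Ax\|^2 = \sum_{i=1}^\infty \Bigl|\sum_{j=1}^\infty a_{ij}x_j\Bigr|^2
\le \sum_{i=1}^\infty \Bigl(\sum_{j=1}^\infty |a_{ij}|^2\Bigr)\Bigl(\sum_{j=1}^\infty |x_j|^2\Bigr)
= \Bigl(\sum_{i,j=1}^\infty |a_{ij}|^2\Bigr)\|x\|^2,
\]
using Cauchy--Schwarz in the inner sum. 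This shows $T_A$ is bounded with $\|T_A\| \le \bigl(\sum_{i,j}|a_{ij}|^2\bigr)^{1/2}$; in fact this is precisely the statement that a Hilbert--Schmidt matrix defines a bounded (indeed Hilbert--Schmidt) operator.

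For the self-adjointness claim, I would compute the matrix coefficients of $T_A$ relative to the canonical orthonormal basis $\{e_i\}_{i=1}^\infty$: by definition $\langle T_A e_j, e_i\rangle = a_{ij}$, since $T_A e_j$ is the vector whose $i$-th coordinate is $a_{ij}$. The adjoint $T_A^*$ then satisfies $\langle T_A^* e_j, e_i\rangle = \overline{\langle T_A e_i, e_j\rangle} = \overline{a_{ji}}$. Hence $T_A = T_A^*$ if and only if $a_{ij} = \overline{a_{ji}}$ for all $i,j$, which is the asserted condition. One direction is immediate; for the converse (that the matrix identity $a_{ij}=\overline{a_{ji}}$ forces $T_A = T_A^*$ as operators, not merely entrywise) I would note that a bounded operator on $\ell_2$ is determined by its matrix in an orthonormal basis, so equality of all matrix entries of $T_A$ and $T_A^*$ gives $T_A = T_A^*$.

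I do not anticipate a serious obstacle here; the only point requiring a little care is justifying the interchange of summation implicit in identifying $\langle T_A x, y\rangle$ with $\sum_{i,j} a_{ij} x_j \overline{y_i}$ when proving the self-adjointness direction, but this is controlled by absolute convergence: $\sum_{i,j}|a_{ij}||x_j||y_i| \le \bigl(\sum_{i,j}|a_{ij}|^2\bigr)^{1/2}\|x\|\,\|y\|$ by Cauchy--Schwarz, so Fubini applies and the rearrangement is legitimate. Thus the lemma follows from the Hilbert--Schmidt estimate together with the standard fact that bounded operators on a separable Hilbert space are determined by their matrices.
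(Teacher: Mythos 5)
Your proof is correct and follows essentially the same route as the paper's: the Cauchy--Schwarz estimate $\|T_Ax\|^2\le\bigl(\sum_{i,j}|a_{ij}|^2\bigr)\|x\|^2$ for boundedness, and comparison of the matrix entries $\langle T_Ae_j,e_i\rangle$ with those of $T_A^*$ in the canonical basis for the self-adjointness criterion. Your added remarks on well-definedness of each $y_i$ and on the absolute convergence justifying the double-sum rearrangement are sound and, if anything, slightly more careful than the paper's version.
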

\begin{proof}
	Let $x=\{x_i\}_{i=1}^{\infty}\in \ell_2$. For each $i=1, 2\ldots$, we have
	\[|y_i|^2\leq\left(\sum_{j=1}^{\infty}|a_{ij}x_j|\right)^2\leq\left(\sum_{j=1}^{\infty}|a_{ij}|^2\right)\left(\sum_{j=1}^{\infty}|x_j|^2\right)=\left(\sum_{j=1}^{\infty}|a_{ij}|^2\right)\Vert x\Vert^2.\] 
	Hence, 
	\[\Vert T_Ax\Vert^2= \sum_{i=1}^{\infty}|y_i|^2\leq\left(\sum_{i=1}^{\infty}\sum_{j=1}^{\infty}|a_{ij}|^2\right)\Vert x\Vert^2.\]
	This shows that $T_A$ is a bounded operator on $\ell_2$.
	
 Suppose that $T$ is self-adjoint. Then 
	\[ a_{ji}=\langle T_Ae_i, e_j\rangle=\langle e_i, T_Ae_j\rangle=\overline{\langle T_Ae_j, e_i\rangle}=\bar{a}_{ij},\] for all $i, j$.
	
	Conversely, if $a_{ji}=\bar{a}_{ij}$ for all $i, j$, then
	$$\langle T_A^*e_i, e_j\rangle=\langle e_i, T_Ae_j\rangle=\overline{\langle T_Ae_j, e_i\rangle}=\bar{a}_{ij}=a_{ji}=\langle T_Ae_i, e_j\rangle,$$ for all $i, j$. Hence $T_A^*=T_A$.
\end{proof}

\subsubsection{The real case}
Now we will solve the infinite dimensional injectivity problem
in the real case.  To avoid confusion between coordinates of a
vector in $\ell_2$ and vectors in $\tilde{\HH}$ we define:

\begin{definition}\label{defn real infi}
	For $x=\{x_i\}_{i=1}^\infty\in \ell_2$, we define
	$$\tilde{x}=(\vec{x}_1, \vec{x}_2, \ldots, \vec{x}_n, \ldots )
	\in \tilde{\HH},$$
	where
	$$\vec{x}_1=(x_1x_1, x_1x_2, \ldots); \ \vec{x}_2=(x_2x_2, x_2x_3, \ldots); \ldots;\vec{x}_n=(x_nx_n, x_nx_{n+1}, \ldots); \ldots $$
\end{definition}

We first observe that these vectors are actually in $\tilde{\HH}$.
\begin{lemma}
	If $x=\{x_i\}_{i=1}^\infty\in \ell_2$, then $\tilde{x}\in \tilde{\HH}$.
\end{lemma}
\begin{proof}
	We have that $$\sum_{j=i}^\infty|x_ix_j|^2=|x_i|^2\sum_{j=i}^\infty|x_j|^2\leq |x_i|^2\Vert x\Vert^2,$$ for $i=1, 2, \ldots.$ Hence $\vec{x}_i\in \ell_2$ for all $i$.
	
	Moreover, since
	$$\sum_{i=1}^{\infty}\Vert\vec{x}_i\Vert^2\leq \sum_{i=1}^{\infty}|x_i|^2\Vert x\Vert^2=\Vert x\Vert^4,$$ then $\tilde{x}\in \tilde{\HH}$.
\end{proof}

Now we are ready for the classification of the solutions to the
injectivity problem in the infinite dimensional case.

\begin{theorem}\label{Real inf}
	Let $\mathcal{X}=\{x_k\}_{k=1}^{\infty}$ be a frame in the real Hilbert space $\ell_2$.  The
	following are equivalent:
	\begin{enumerate}
		\item $\mathcal{X}$ is injective.
		\item $\overline{\spn}\{\tilde{x}_k\}_{k=1}^\infty=\tilde{\HH}$.
	\end{enumerate}
		
\end{theorem}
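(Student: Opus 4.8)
The plan is to mirror the finite-dimensional argument of Theorem~\ref{finite real}, using the operator construction of Lemma~\ref{lem100} to pass between Hilbert--Schmidt self-adjoint operators on $\ell_2$ and vectors in $\tilde{\HH}$. The bridge is the identity
\[ \langle Tx, x\rangle = \langle \tilde{T}, \tilde{x}\rangle, \]
valid for every Hilbert--Schmidt self-adjoint $T=(a_{ij})_{i,j=1}^{\infty}$ and every $x=\{x_i\}_{i=1}^\infty \in \ell_2$, where $\tilde{T}=(\vec{T}_1,\vec{T}_2,\ldots) \in \tilde{\HH}$ is defined by $\vec{T}_i=(a_{ii},2a_{i,i+1},2a_{i,i+2},\ldots)$. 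To justify this I would first note that $\tilde{T}\in\tilde{\HH}$ because $\sum_i\|\vec{T}_i\|^2 \le 4\sum_{i,j}|a_{ij}|^2<\infty$, and that the double series $\sum_{i,j}a_{ij}x_ix_j$ converges absolutely by Cauchy--Schwarz, since $\sum_{i,j}|a_{ij}||x_i||x_j| \le \|T\|_{HS}\|x\|^2$; hence the rearrangement $\sum_{i,j}a_{ij}x_ix_j=\sum_i a_{ii}x_i^2+2\sum_{i<j}a_{ij}x_ix_j$ is legitimate, and unwinding the inner product in $\tilde{\HH}$ term by term yields the identity.

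Granting this identity, the direction $(2)\Rightarrow(1)$ is immediate: if $\overline{\spn}\{\tilde{x}_k\}_{k=1}^\infty=\tilde{\HH}$ and $T$ is Hilbert--Schmidt self-adjoint with $\langle Tx_k,x_k\rangle=0$ for all $k$, then $\langle\tilde{T},\tilde{x}_k\rangle=0$ for all $k$, so $\tilde{T}$ is orthogonal to $\overline{\spn}\{\tilde{x}_k\}=\tilde{\HH}$, whence $\tilde{T}=0$ and therefore $T=0$; by Theorem~\ref{TH} this gives injectivity of $\mathcal{X}$.

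For $(1)\Rightarrow(2)$ I would argue by contraposition. Suppose $\overline{\spn}\{\tilde{x}_k\}\ne\tilde{\HH}$, and pick a nonzero $\vec{a}=(\vec{a}_1,\vec{a}_2,\ldots)\in\tilde{\HH}$ with $\vec{a}\perp\tilde{x}_k$ for all $k$; write the coordinates of $\vec{a}_i$ as $b_{ij}$ for $j\ge i$. Define the matrix $A=(c_{ij})_{i,j=1}^\infty$ by $c_{ii}=b_{ii}$ and $c_{ij}=c_{ji}=\tfrac12 b_{ij}$ for $i<j$; then $\sum_{i,j}|c_{ij}|^2 \le \sum_i\|\vec{a}_i\|^2=\|\vec{a}\|^2<\infty$, so by Lemma~\ref{lem100} the operator $T:=T_A$ is a bounded self-adjoint (indeed Hilbert--Schmidt) operator on $\ell_2$, and $T\ne0$ since $\vec{a}\ne0$. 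Unwinding inner products exactly as above gives $\langle Tx_k,x_k\rangle=\langle\vec{a},\tilde{x}_k\rangle=0$ for all $k$, contradicting injectivity of $\mathcal{X}$.

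The main obstacle is the convergence and well-definedness bookkeeping around the bridging identity: one must verify that $\tilde{T}$ really lands in $\tilde{\HH}$, that $\langle Tx,x\rangle$ is an absolutely convergent double sum so the diagonal/off-diagonal split is valid, and -- in the converse direction -- that the array $(c_{ij})$ built from an arbitrary vector of $\tilde{\HH}$ genuinely defines a Hilbert--Schmidt self-adjoint operator rather than a merely formal matrix. All three points reduce to the Hilbert--Schmidt condition (square-summability of the matrix entries) together with Lemma~\ref{lem100} and the lemma that $\tilde{x}\in\tilde{\HH}$ for $x\in\ell_2$; no genuinely new idea beyond the finite-dimensional case is needed, only care with infinite sums.
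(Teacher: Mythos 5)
Your proposal is correct and follows essentially the same route as the paper's proof: the same bridging identity $\langle Tx,x\rangle=\langle\tilde{T},\tilde{x}\rangle$, the same matrix construction from an orthogonal vector via Lemma~\ref{lem100} for $(1)\Rightarrow(2)$, and the same direct argument for $(2)\Rightarrow(1)$. The only (welcome) difference is that you make the absolute-convergence and square-summability bookkeeping explicit where the paper leaves it implicit.
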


\begin{proof}
	$(1) \Rightarrow (2)$: Let any $a=(\vec{a}_1, \vec{a}_2, \ldots )\in \tilde{\HH}$ be such that $a\perp \overline{\spn}\{\tilde{x}_k\}_{k=1}^\infty$. Then  $\langle a, \tilde{x}_k\rangle=0$ for all $k$.	
	
	We denote
	$$\vec{a}_1=(a_{11}, a_{12}, \ldots); \ \vec{a}_2=(a_{22}, a_{23}, \ldots); \ldots,\vec{a}_n=(a_{nn}, a_{n(n+1)}, \ldots); \ldots.$$ 
	
	Define an infinite matrix $B=(b_{ij})_{i,j=1}^\infty$, where $b_{ii}=a_{ii}$ for all $i$ and $b_{ij}=b_{ji}=\dfrac{1}{2}a_{ij}$ for all $i<j$.
	
	Then by Lemma \ref{lem100}, the operator $T_B$ defined by $B$ is a Hilbert
	Schmidt self-adjoint operator.
	
	For any $x= \{x_i\}_{i=1}^{\infty}\in \ell_2$, we have
	\begin{align*}
	\langle T_Bx,x\rangle&= \sum_{i=1}^{\infty}\sum_{j=1}^{\infty}b_{ij}x_ix_j\\
	&=\sum_{i=1}^{\infty}b_{ii}x_i^2+2\sum_{i<j}b_{ij}x_ix_j\\
	&=\sum_{i=1}^{\infty}a_{ii}x_i^2+\sum_{i<j}a_{ij}x_ix_j\\
	&=\sum_{i=1}^{\infty}\langle \vec{a}_i, \vec{x}_i\rangle\\
	&=\langle a, \tilde{x}\rangle.
	\end{align*}
	Hence, 
	$\langle T_Bx_k, x_k\rangle=\langle a, \tilde{x}_k\rangle=0$ for all $k$. This implies $T_B=0$ by (1) and therefore $a=0$.
	
	$(2)\Rightarrow (1)$: Let $T$ be a Hilbert Schmidt
 self-adjoint operator on $\ell_2$ such that $\langle Tx_k, x_k\rangle=0$ for all $k$, and recall that $\{e_i\}_{i=1}^\infty$ is the canonical orthonormal basis for $\ell_2$. 
	
	Denote 
	$$a_{ij}=\langle Te_j, e_i\rangle, i, j=1, 2\ldots,$$ and 
	$$\tilde{T}=(\vec{a}_1, \vec{a}_2, \ldots, \vec{a}_n, \ldots),$$
	where
	$$\vec{a}_1=(a_{11}, 2a_{12}, 2a_{13},\ldots); \quad \vec{a}_2=(a_{22}, 2a_{23},2a_{24}, \ldots);\ldots; $$
	$$\vec{a}_n =(a_{nn}, 2a_{n(n+1)}, 2a_{n(n+2)}, \ldots); \ldots.$$
	Since $T$ is a Hilbert Schmidt operator, $\tilde{T}\in \tilde{\HH}$. 
	Moreover, we have 
	$$\langle \tilde{T}, \tilde{x}_k\rangle=\langle Tx_k, x_k\rangle=0, \mbox{ for all } k.$$ Since $\overline{\spn}\{\tilde{x}_k\}_{k=1}^\infty=\tilde{\HH}$, we get $\tilde{T}=0$. So $T=0$. 
	
\end{proof}

\begin{remark}
We have that $(2)\Rightarrow (1)$ in the theorem holds for
trace class operators.  But in general $(1)\Rightarrow (2)$ since
the operators we construct may not be trace class.
\end{remark}

\subsubsection{The complex case}

For the complex case of the injectivity problem, we need a new
variation of the tilde vectors.

\begin{definition}\label{defn complex infi}
	For $x=\{x_i\}_{i=1}^\infty\in \ell_2$, we define
	$$\tilde{x}=(\vec{x}_1, \vec{x}_2, \ldots, \vec{x}_n, \ldots ),$$
	where
	\[\vec{x}_1= (|x_1|^2,\R(\bar{x}_1x_2), \I(\bar{x}_1x_2),\R(\bar{x}_1x_3),\I(\bar{x}_1x_3),\ldots);\]
	\[\vec{x}_2= (|x_2|^2,\R(\bar{x}_2x_3), \I(\bar{x}_2x_3),\R(\bar{x}_2x_4),\I(\bar{x}_2x_4),\ldots);\ldots;\]
	\[\vec{x}_n= (|x_n|^2,\R(\bar{x}_nx_{n+1}), \I(\bar{x}_nx_{n+1}),\R(\bar{x}_nx_{n+2}),\I(\bar{x}_nx_{n+2}),\ldots);\ldots.\]
	
\end{definition}
We first need to verify that our vectors are in $\tilde{\HH}$.

\begin{lemma}
	If $x=\{x_i\}_{i=1}^\infty\in \ell_2$, then $\tilde{x}\in \tilde{\HH}$.
\end{lemma}
\begin{proof}
	For each $i=1, 2, \ldots,$ we have \begin{align*}
	\Vert\vec{x}_i\Vert^2&=|x_i|^4+\sum_{j=i+1}^\infty|\R(\bar{x}_ix_j)|^2+\sum_{j=i+1}^\infty|\I(\bar{x}_ix_j)|^2\\
	&=|x_i|^4+\sum_{j=i+1}^\infty|\bar{x}_ix_j|^2\\
	&=|x_i|^2\left(|x_i|^2+\sum_{j=i+1}^\infty|x_j|^2\right)\\
	&\leq |x_i|^2\Vert x\Vert^2.\end{align*} 
	It follows that:
	\[ \sum_{i=1}^{\infty}\|\vec{x}_i\|^2 \le  \sum_{i=1}^{\infty}
	|x_i|^2\|x\|^2= \|x\|^4.\]
	This implies $\tilde{x}\in \tilde{\HH}$.	
\end{proof}

Now we give the classification theorem for injectivity in the
infinite dimensional case.

\begin{theorem}\label{complex infinite}
	Let $\mathcal{X}=\{x_k\}_{k=1}^{\infty}$ be a frame in the complex Hilbert space $\ell_2$.  The
	following are equivalent:
	\begin{enumerate}
		\item $\mathcal{X}$ gives injectivity.
		\item $\overline{\spn}\{\tilde{x}_k\}_{k=1}^\infty=\tilde{\HH}$.
	\end{enumerate}
\end{theorem}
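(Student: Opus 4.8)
The plan is to run the same argument as in Theorem \ref{Real inf}, with the real bilinear bookkeeping replaced by the complex Hermitian expansion already carried out in the finite complex case, and with the tilde vectors taken from Definition \ref{defn complex infi}. The two implications are essentially independent of each other, and neither is deep; the work is entirely in the convergence bookkeeping.

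For $(2)\Rightarrow(1)$, suppose $\overline{\spn}\{\tilde x_k\}_{k=1}^\infty=\tilde\HH$ and let $T$ be a Hilbert--Schmidt self-adjoint operator on $\ell_2$ with $\langle Tx_k,x_k\rangle=0$ for all $k$. Put $a_{ij}=\langle Te_j,e_i\rangle$; self-adjointness gives $a_{ji}=\bar a_{ij}$, and $\sum_{i,j}|a_{ij}|^2=\sum_j\|Te_j\|^2<\infty$. Assemble $\tilde T=(\vec a_1,\vec a_2,\ldots)$ with $\vec a_n=(a_{nn},2\R(a_{n(n+1)}),-2\I(a_{n(n+1)}),2\R(a_{n(n+2)}),-2\I(a_{n(n+2)}),\ldots)$, so that $\|\vec a_n\|^2\le |a_{nn}|^2+4\sum_{j>n}|a_{nj}|^2$ and hence $\tilde T\in\tilde\HH$. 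The Hermitian expansion of the finite complex case applies verbatim for each $x=\{x_i\}\in\ell_2$, giving $\langle Tx,x\rangle=\sum_i a_{ii}|x_i|^2+2\sum_{i<j}\bigl(\R(a_{ij})\R(\bar x_ix_j)-\I(a_{ij})\I(\bar x_ix_j)\bigr)=\langle\tilde T,\tilde x\rangle$. Thus $\langle\tilde T,\tilde x_k\rangle=\langle Tx_k,x_k\rangle=0$ for all $k$; density forces $\tilde T=0$, whence every $a_{ij}=0$ and $T=0$.

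For $(1)\Rightarrow(2)$, assume $\mathcal X$ is injective and let $a=(\vec a_1,\vec a_2,\ldots)\in\tilde\HH$ be orthogonal to $\overline{\spn}\{\tilde x_k\}$, so $\langle a,\tilde x_k\rangle=0$ for all $k$. Write $\vec a_n=(a_{nn},u_{n(n+1)},v_{n(n+1)},u_{n(n+2)},v_{n(n+2)},\ldots)$ to match the block structure of $\vec x_n$, and define the infinite matrix $B=(b_{ij})$ by $b_{ii}=a_{ii}$ and, for $i<j$, $b_{ij}=\tfrac12(u_{ij}-\iota v_{ij})$, $b_{ji}=\bar b_{ij}$. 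Since $a\in\tilde\HH$ we get $\sum_{i,j}|b_{ij}|^2<\infty$, so Lemma \ref{lem100} yields a bounded self-adjoint operator $T_B$ on $\ell_2$, and $\sum_{i,j}|\langle T_Be_j,e_i\rangle|^2=\sum_{i,j}|b_{ij}|^2<\infty$ shows it is Hilbert--Schmidt. Absolute convergence of the double series (Cauchy--Schwarz against $\sum_{i,j}|x_ix_j|^2=\|x\|^4$) lets us reorganize $\langle T_Bx,x\rangle=\sum_i b_{ii}|x_i|^2+2\sum_{i<j}\R(b_{ij}\bar x_ix_j)$, and substituting the definitions of $b_{ij}$ and $\vec a_n$ turns the right side into $\langle a,\tilde x\rangle$. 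Hence $\langle T_Bx_k,x_k\rangle=0$ for all $k$; injectivity forces $T_B=0$, so $a=0$, i.e. $\overline{\spn}\{\tilde x_k\}=\tilde\HH$.

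The main obstacle, exactly as in the real infinite-dimensional case, is not any clever idea but the square-summability bookkeeping: verifying that $\tilde T$ lands in $\tilde\HH$ and that $T_B$ is genuinely Hilbert--Schmidt, and, most delicately, justifying the rearrangement of the order of summation in the double series that defines $\langle T_Bx,x\rangle$ so that the Hermitian expansion is legitimate. Both are handled by the Cauchy--Schwarz estimate using $\{b_{ij}\}\in\ell_2$ together with $x\otimes x\in\ell_2$. As with the remark following Theorem \ref{Real inf}, one should note that $(2)\Rightarrow(1)$ stays valid for trace class operators, whereas the operator $T_B$ built in $(1)\Rightarrow(2)$ need not be trace class, so the trace one version of the statement is not a formal corollary and is the reason for the separate treatment given earlier.
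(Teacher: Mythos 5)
Your proposal is correct and follows essentially the same route as the paper's proof: the same matrix $B$ built from the blocks of $a$ via $b_{ij}=\tfrac12(u_{ij}-\iota v_{ij})$, the same appeal to Lemma \ref{lem100}, the same Hermitian expansion identifying $\langle T_Bx,x\rangle$ with $\langle a,\tilde x\rangle$, and the same construction of $\tilde T$ for the converse. The only difference is that you make the convergence and rearrangement bookkeeping explicit where the paper leaves it implicit, which is a welcome but not substantive addition.
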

\begin{proof}
	$(1) \Rightarrow (2)$: Let any $a=(\vec{a}_1, \vec{a}_2, \ldots )\in \tilde{\HH}$ be such that $a\perp \overline{\spn}\{\tilde{x}_k\}_{k=1}^\infty$. Then  $\langle a, \tilde{x}_k\rangle=0$ for all $k$.	
	
	Denote
	\[\vec{a}_1=(a_{11}, u_{12}, v_{12}, u_{13}, v_{13}, \ldots);\
	\vec{a}_2=(a_{22}, u_{23}, v_{23}, u_{24}, v_{24}, \ldots);\ldots;\]
	\[\vec{a}_n=(a_{nn}, u_{n(n+1)}, v_{n(n+1)}, u_{n(n+2)}, v_{n(n+2)}, \ldots);\ldots.\]
	
	Define an infinite matrix $B=(b_{ij})_{i,j=1}^\infty$, where $b_{ii}=a_{ii}$ for all $i$ and $b_{ij}=\bar{b}_{ji}=\dfrac{1}{2}(u_{ij}-\iota v_{ij})$ for all $i<j$.
	
	We have
	\begin{align*}
	\sum_{i,j=1}^{\infty}|b_{ij}|^2=\sum_{i=1}^{\infty}|a_{ii}|^2+2\sum_{i<j}|b_{ij}|^2=\sum_{i=1}^{\infty}|a_{ii}|^2+\dfrac{1}{2}\sum_{i<j}\left(|u_{ij}|^2+|v_{ij}|^2\right)<\infty.
	\end{align*}
	
	Then by Lemma \ref{lem100}, the operator $T_B$ defined by $B$ is Hilbert
	Schmidt and self-adjoint.

	For any $x=\{x_i\}_{i=1}^\infty\in \ell_2$, we have
	\begin{align*}
	\langle T_Bx,x\rangle &= \sum_{i=1}^\infty \sum_{j=1}^\infty b_{ij}\bar{x}_ix_j\\
	&=\sum_{i=1}^\infty b_{ii}|x_i|^2 + 2\sum_{i<j}\R( b_{ij}\bar{x}_ix_j)\\
	&=\sum_{i=1}^\infty b_{ii}|x_i|^2 + 2\sum_{i<j}(\R( b_{ij})\R(\bar{x}_ix_j)-\I( b_{ij})\I(\bar{x}_ix_j))\\
	&=\sum_{i=1}^{\infty}a_{ii}|x_i|^2+\sum_{i<j }\left(u_{ij}\R(\bar{x}_ix_j)+v_{ij}\I(\bar{x}_ix_j)\right)\\
	&=\sum_{i=1}^{\infty}\langle \vec{a}_i, \vec{x}_i\rangle\\
	&=\langle a,\tilde{x}\rangle.
	\end{align*}
	Hence, 
	$\langle T_Bx_k, x_k\rangle=\langle a, \tilde{x}_k\rangle=0$ for all $k$. This implies $T_B=0$ by (1) and therefore $a=0$.
	
	$(2)\Rightarrow (1)$: Let $T$ be a Hilbert Schmidt self-adjoint operator such that $\langle Tx_k, x_k\rangle=0$ for all $k$.
	
	Denote 
	$$a_{ij}=\langle Te_j, e_i\rangle, i, j=1, 2,\ldots,$$ and 
	$$\tilde{T}=(\vec{a}_1, \vec{a}_2, \ldots, \vec{a}_n, \ldots),$$
	where
	$$\vec{a}_1=(a_{11}, 2\R(a_{12}), -2\I(a_{12}), 2\R(a_{13}), -2\I(a_{13}),\ldots); $$
	$$\vec{a}_2=(a_{22}, 2\R(a_{23}), -2\I(a_{23}),2\R(a_{24}), -2\I(a_{24}), \ldots);\ldots; $$
	$$\vec{a}_n =(a_{nn}, 2\R(a_{n(n+1)}), -2\I(a_{n(n+1)}),2\R(a_{n(n+2)}), -2\I(a_{n(n+2)}), \ldots); \ldots$$
	Since $T$ is Hilbert Schmidt, $\tilde{T}\in \tilde{\HH}$. 
	
	For any $x=\sum_{j=1}^{\infty}x_je_j$ we have
	\begin{align*}\langle Tx, x\rangle&=\sum_{i=1}^{\infty}\sum_{j=1}^\infty\bar{x}_ix_j\langle Te_j, e_i\rangle\\
	&=\sum_{i=1}^{\infty}\sum_{j=1}^\infty\bar{x}_ix_ja_{ij}\\
	&=\sum_{i=1}^\infty a_{ii}|x_i|^2 + 2\sum_{i<j}\R( a_{ij}\bar{x}_ix_j)\\
	&=\sum_{i=1}^\infty a_{ii}|x_i|^2 + 2\sum_{i<j}(\R( a_{ij})\R(\bar{x}_ix_j)-\I( a_{ij})\I(\bar{x}_ix_j))\\
	&=\langle \tilde{T}, \tilde{x}\rangle.
	\end{align*}
	Hence
	$$\langle \tilde{T}, \tilde{x}_k\rangle=\langle Tx_k, x_k\rangle=0, \mbox{ for all } k.$$ Since $\overline{\spn}\{\tilde{x}_k\}_{k=1}^\infty=\tilde{\HH}$, $\tilde{T}=0$. So $T=0$. This completes the proof.
\end{proof}

As a consequence we have:

\begin{corollary}
For a frame $\{x_k\}_{k=1}^{\infty}$ in $\ell_2$ the following
are equivalent:
\begin{enumerate}
\item The family $\{x_kx_k^*\}_{k=1}^{\infty}$ spans the family
of real self-adjoint Hilbert Schmidt operators on $\ell_2$.
\item The family $\{\tilde{x}_k\}_{k=1}^{\infty}$ spans $\tilde{\HH}$. 
\end{enumerate}
\end{corollary}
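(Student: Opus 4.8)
The plan is to read this off directly from the infinite-dimensional injectivity classification together with the elementary Hilbert--Schmidt pairing identity. For any self-adjoint Hilbert--Schmidt operator $T$ on $\ell_2$ and any $k$ we have
\[
\langle T, x_k x_k^*\rangle_{HS} = \tr\!\big(T x_k x_k^*\big) = \langle T x_k, x_k\rangle,
\]
exactly as in the finite-dimensional corollary. Note first that each $x_k x_k^*$ is rank one, hence trace class, hence Hilbert--Schmidt, and is self-adjoint, so statement (1) is a statement about the real Hilbert space $Sym(\ell_2)\cap L^2(\ell_2)$ equipped with $\langle\cdot,\cdot\rangle_{HS}$ --- precisely the class of operators that enters the definition of injectivity. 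Since this space is infinite dimensional and we have only countably many $x_k x_k^*$, the word ``spans'' here has to be understood as ``has dense linear span'', i.e.\ the closed linear span is all of $Sym(\ell_2)\cap L^2(\ell_2)$; the analogous remark applies to $\tilde\HH$ in (2).

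First I would observe that a self-adjoint Hilbert--Schmidt $T$ is orthogonal to $\overline{\spn}\{x_k x_k^*\}_{k=1}^\infty$ if and only if $\langle T, x_k x_k^*\rangle_{HS} = \langle T x_k, x_k\rangle = 0$ for every $k$. Hence $\overline{\spn}\{x_k x_k^*\}_{k=1}^\infty = Sym(\ell_2)\cap L^2(\ell_2)$ exactly when the only self-adjoint Hilbert--Schmidt operator annihilating every $x_k$ in this sense is the zero operator, which is the definition of $\mathcal{X}$ being injective. Thus (1) is equivalent to injectivity of $\mathcal{X}$. Then I would apply Theorem \ref{Real inf} in the real case and Theorem \ref{complex infinite} in the complex case, each of which says that $\mathcal{X}$ is injective if and only if $\overline{\spn}\{\tilde x_k\}_{k=1}^\infty = \tilde\HH$, namely statement (2). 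Concatenating the two equivalences yields $(1)\Leftrightarrow(2)$.

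I do not anticipate a substantive obstacle: once the Hilbert--Schmidt pairing identity is in hand, the argument is a two-line chase through the already-established classification theorems. The only points that require a little care are bookkeeping ones --- reading ``span'' as closed span, and checking that the $x_k x_k^*$ live in exactly the operator class used to define injectivity so that the classification theorems apply verbatim. One should resist trying to prove the stronger statement with honest finite linear spans, since in infinite dimensions that version is false in general.
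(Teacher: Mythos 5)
Your proof is correct and follows exactly the route the paper intends: the corollary is stated as an immediate consequence of Theorems \ref{Real inf} and \ref{complex infinite}, via the identity $\langle T,x_kx_k^*\rangle_{HS}=\tr(Tx_kx_k^*)=\langle Tx_k,x_k\rangle$ that the paper already uses for the finite-dimensional analogue. Your added care about reading ``spans'' as closed linear span in the real Hilbert space $Sym(\ell_2)\cap L^2(\ell_2)$ is the right reading and consistent with the paper's use of $\overline{\spn}$ in the classification theorems.
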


\begin{remark}
		As we have seen in the proof of Theorem \ref{Real inf} for the real case and Theorem \ref{complex infinite} for the complex case, for a vector $a\in \tilde{\HH}$, there is a Hilbert Schmidt self-adjoint operator $T$ so that
		$$\langle Tx, x\rangle=\langle a, \tilde{x}\rangle, \mbox{ for all } x\in \ell_2.$$
		
		Conversely, for a Hilbert Schmidt self-adjoint  operator $T$, there is a vector $\tilde{T}\in \tilde{\HH}$ satisfying
		$$\langle \tilde{T}, \tilde{x}\rangle=\langle Tx, x\rangle, \mbox{ for all } x\in \ell_2.$$
	\end{remark}

 Is is easy to see that the canonical orthonormal basis is not injective. Actually, the family $\{\tilde{e}_i\}_{i=1}^\infty$ forms an orthonormal set in $\tilde{\HH}$. We will see in general that any frame in $\ell_2$ so that the corresponding tilde vectors form a frame sequence in $\tilde{\HH}$ cannot be injective.

	\begin{theorem}\label{thm103}
		For any Bessel sequence $\{x_k\}_{k=1}^{\infty}$ for the real or complex space $\ell_2$, the family
		$\{\tilde{x}_k\}_{k=1}^{\infty}$ is a Bessel sequence in $\tilde{\HH}$. However, $\{\tilde{x}_k\}_{k=1}^{\infty}$ is not a frame for $\tilde{\HH}$.
	\end{theorem}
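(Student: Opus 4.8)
The plan is to treat the two assertions separately, in each case passing to operators via the dictionary recorded in the Remark just above the theorem: for each $a\in\tilde{\HH}$ there is a Hilbert--Schmidt self-adjoint operator $T_a$ on $\ell_2$ with $\langle a,\tilde{x}\rangle=\langle T_ax,x\rangle$ for all $x\in\ell_2$, and dually each Hilbert--Schmidt self-adjoint $T$ has an associated $\widetilde{T}\in\tilde{\HH}$ with $\langle\widetilde{T},\tilde{x}\rangle=\langle Tx,x\rangle$. The one quantitative fact I need is a norm comparison, read off directly from the two explicit formulas in the proofs of Theorems \ref{Real inf} and \ref{complex infinite}: $T_a$ is built from a matrix whose off-diagonal entries are \emph{half} the corresponding entries of $a$, so $\|T_a\|_{HS}\le\|a\|$; dually $\widetilde{T}$ is built by stacking blocks whose off-diagonal entries are \emph{twice} those of $T$, so $\|\widetilde{T}\|\ge\|T\|_{HS}$.

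For the Bessel assertion, fix $a\in\tilde{\HH}$, put $T=T_a$, and let $B$ be a Bessel bound for $\{x_k\}$, so $\|x_k\|^2\le B$ and $\sum_k|\langle x_k,y\rangle|^2\le B\|y\|^2$ for all $y$. Since $\langle a,\tilde{x}_k\rangle=\langle Tx_k,x_k\rangle$, Cauchy--Schwarz gives
\[
\sum_k|\langle a,\tilde{x}_k\rangle|^2\le\sum_k\|Tx_k\|^2\,\|x_k\|^2\le B\sum_k\|Tx_k\|^2 .
\]
Now $\|Tx_k\|^2=\sum_i|\langle Tx_k,e_i\rangle|^2=\sum_i|\langle x_k,Te_i\rangle|^2$ since $T=T^*$; interchanging the two (nonnegative) sums and applying the Bessel inequality to each vector $Te_i$ yields $\sum_k\|Tx_k\|^2\le B\sum_i\|Te_i\|^2=B\|T\|_{HS}^2\le B\|a\|^2$. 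Hence $\sum_k|\langle a,\tilde{x}_k\rangle|^2\le B^2\|a\|^2$, so $\{\tilde{x}_k\}_{k=1}^\infty$ is Bessel in $\tilde{\HH}$ with bound $B^2$.

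For the failure of any lower frame bound, I exhibit a single sequence of test vectors along which the frame inequality collapses. For $N\ge2$ let $T_N=\langle\cdot,e_1\rangle e_N+\langle\cdot,e_N\rangle e_1$, a rank-two self-adjoint operator whose nonzero eigenvalues are $\pm1$, so $\|T_N\|_{HS}^2=2$ and the vector $a^{(N)}:=\widetilde{T_N}\in\tilde{\HH}$ satisfies $\|a^{(N)}\|^2\ge\|T_N\|_{HS}^2=2$. On the other hand $\langle a^{(N)},\tilde{x}_k\rangle=\langle T_Nx_k,x_k\rangle=2\,\R\!\big(\langle x_k,e_1\rangle\overline{\langle x_k,e_N\rangle}\big)$, so $\sum_k|\langle a^{(N)},\tilde{x}_k\rangle|^2\le 4\sum_k|\langle x_k,e_1\rangle|^2|\langle x_k,e_N\rangle|^2$. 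Summing the right side over $N\ge2$ and applying Tonelli, $\sum_{N\ge2}\sum_k|\langle x_k,e_1\rangle|^2|\langle x_k,e_N\rangle|^2\le\sum_k|\langle x_k,e_1\rangle|^2\|x_k\|^2\le B\sum_k|\langle x_k,e_1\rangle|^2\le B^2<\infty$, so the $N$-th inner sum tends to $0$ as $N\to\infty$. Thus $\sum_k|\langle a^{(N)},\tilde{x}_k\rangle|^2\to0$ while $\|a^{(N)}\|^2\ge2$; a lower frame bound $A>0$ for $\{\tilde{x}_k\}$ would force $2A\le\sum_k|\langle a^{(N)},\tilde{x}_k\rangle|^2\to0$, i.e. $A=0$. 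Hence $\{\tilde{x}_k\}_{k=1}^\infty$ is not a frame for $\tilde{\HH}$. The real and complex cases run in parallel, the field entering only through $|\R(z)|\le|z|$.

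Everything here is Cauchy--Schwarz and Tonelli for nonnegative double series, so I expect no conceptual obstacle; the one point requiring care is the norm comparison $\|\widetilde{T}\|\ge\|T\|_{HS}$ (and $\|T_a\|_{HS}\le\|a\|$), i.e. keeping track of the factor $2$ on the off-diagonal entries in the definitions of $\tilde{x}$ and $\widetilde{T}$ — this is exactly what makes the comparison a one-sided inequality rather than an isometry, and it is what guarantees that $\|a^{(N)}\|$ stays bounded below. That bookkeeping, rather than any genuine difficulty, is the main thing to get right.
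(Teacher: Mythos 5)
Your proof is correct, and for the Bessel half it takes a genuinely different route from the paper. The paper fixes a finite scalar sequence $\{a_k\}$ and bounds the synthesis operator directly in coordinates, via $\|\sum_k a_k\tilde{x}_k\|^2\le\sum_i\|\sum_k a_kx_{ki}x_k\|^2\le B\sum_k a_k^2\|x_k\|^2$; you instead bound the analysis operator by passing through the operator dictionary, using Cauchy--Schwarz on $\langle T_ax_k,x_k\rangle$, the estimate $\sum_k\|Tx_k\|^2\le B\|T\|_{HS}^2$, and the norm comparison $\|T_a\|_{HS}\le\|a\|$. Your version avoids the coordinate bookkeeping (and treats the real and complex cases uniformly) at the cost of a slightly larger Bessel constant ($B^2$ versus the paper's $B$, resp.\ $2B$, after normalizing $\|x_k\|\le1$). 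For the failure of the lower bound, your witnesses $a^{(N)}=\widetilde{T_N}$ are, up to the factor $2$, exactly the paper's test vectors $\tilde{e}_{1m}$ (unit vectors in the first block of $\tilde{\HH}$), and the pairing reduces to the same quantity $x_{k1}x_{kN}$; the only difference is how one sees that $\sum_k|x_{k1}|^2|x_{kN}|^2\to0$ as $N\to\infty$. The paper does an explicit two-step $\epsilon$-splitting (choose $n$ so the tail in $k$ is small, then $m$ so the head is small), while you observe that these are the terms of a convergent double series by Tonelli, which is cleaner and gives the conclusion for the whole sequence of witnesses at once. The two norm comparisons you flag, $\|T_a\|_{HS}\le\|a\|$ and $\|\widetilde{T}\|\ge\|T\|_{HS}$, are indeed the only bookkeeping that matters, and you have them right.
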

	\begin{proof} We may assume that $\|x_k\|\le 1$ for all $k$.
	Let $B$ be the Bessel bound of $\{x_k\}_{k=1}^\infty$.
		
		Given any finite real scalar sequence 
		$\{a_k\}_{k=1}^{\infty}$ we will compute the real case and the complex case separately.
		
		\vskip10pt
		{\bf The real case:} Using Definition \ref{defn real infi} for the tilde vector $\tilde{x}_k$, we have 
		\begin{align*}
		\|\sum_{k=1}^{\infty}a_k\tilde{x}_k\|^2 &=
		\sum_{i=1}^{\infty}\sum_{j=i}^{\infty}\left (\sum_{k=1}^{\infty}
		a_kx_{ki}x_{kj}\right )^2\\
		&\le \sum_{i=1}^{\infty}\sum_{j=1}^{\infty}\left ( \sum_{k=1}^{\infty}
		a_kx_{ki}x_{kj}\right )^2\\
		&=\sum_{i=1}^{\infty}\|\sum_{k=1}^{\infty}a_kx_{ki}x_k\|^2.	
		\end{align*}
		Using the fact that $\{x_k\}_{k=1}^\infty$ is Bessel with bound $B$, we get
		\begin{align*}
		\|\sum_{k=1}^{\infty}a_k\tilde{x}_k\|^2 &\le B \sum_{i=1}^{\infty}\sum_{k=1}^{\infty}(a_kx_{ki})^2\\
		&=B\sum_{k=1}^{\infty}a_k^2\sum_{i=1}^{\infty}x_{ki}^2\\
		&= B \sum_{k=1}^{\infty}a_k^2 \|x_k\|^2\\
		&\le B \sum_{k=1}^{\infty}a_k^2.
		\end{align*}
		
		{\bf The complex case:} Now we need to use Definition \ref{defn complex infi} for the tilde vectors $\tilde{x}_k$. We have that
		\begin{align*}
		\|\sum_{k=1}^{\infty}a_k\tilde{x}_k\|^2 &=
		\sum_{i=1}^{\infty}\left ( \sum_{k=1}^{\infty}
		a_k|x_{ki}|^2\right )^2+\sum_{i=1}^{\infty}\sum_{j=i+1}^{\infty}\left ( \sum_{k=1}^{\infty}
		a_k\R(\bar{x}_{ki}x_{kj})\right )^2\\
		&+\sum_{i=1}^{\infty}\sum_{j=i+1}^{\infty}\left ( \sum_{k=1}^{\infty}
		a_k\I(\bar{x}_{ki}x_{kj})\right )^2\\
		&=\sum_{i=1}^{\infty}\left ( \sum_{k=1}^{\infty}
		a_k|x_{ki}|^2\right )^2+\sum_{i=1}^{\infty}\sum_{j=i+1}^{\infty}\left(\R\left ( \sum_{k=1}^{\infty}
		a_k\bar{x}_{ki}x_{kj}\right)\right )^2\\
		&+\sum_{i=1}^{\infty}\sum_{j=i+1}^{\infty}\left(\I\left ( \sum_{k=1}^{\infty}
		a_k\bar{x}_{ki}x_{kj}\right)\right )^2\\
		&\le 2\sum_{i=1}^{\infty}\left ( \sum_{k=1}^{\infty}
		a_k|x_{ki}|^2\right )^2+2\sum_{i=1}^{\infty}\sum_{j=i+1}^{\infty}\left | \sum_{k=1}^{\infty}
		a_k\bar{x}_{ki}x_{kj}\right|^2\\
		&\le 2\sum_{i=1}^{\infty}\left ( \sum_{k=1}^{\infty}
		a_k|x_{ki}|^2\right )^2+2\sum_{i=1}^{\infty}\sum_{j=1, j\not= i}^{\infty}\left | \sum_{k=1}^{\infty}
		a_k\bar{x}_{ki}x_{kj}\right|^2\\
		&=2\sum_{i=1}^{\infty}\|\sum_{k=1}^{\infty}a_k\bar{x}_{ki}x_k\|^2
		\le 2B \sum_{i=1}^{\infty}\sum_{k=1}^{\infty}a_k^2|x_{ki}|^2\\
		&=2B\sum_{k=1}^{\infty}a_k^2\sum_{i=1}^{\infty}|x_{ki}|^2
		= 2B \sum_{k=1}^{\infty}a_k^2 \|x_k\|^2
		\le 2B \sum_{k=1}^{\infty}a_k^2.
		\end{align*}
		
		Hence, $\{\tilde{x}_k\}_{k=1}^{\infty}$ is a Bessel sequence for the both cases. 
		
		\vskip10pt
		Now we will show that $\{\tilde{x}_k\}_{k=1}^{\infty}$ fails to have a lower frame bound. We will prove the real case, the complex case is similar.
		By our assumption, we have that
		\[ \sum_{i=1}^{\infty}|x_{ki}|^2<\infty, \mbox{ for all }k.\]
		Also,
		\[ \sum_{k=1}^{\infty}|x_{ki}|^2 = \sum_{k=1}^{\infty}|\langle
		e_i,x_k\rangle|^2 < \infty, \mbox{ for all }i.\]
		Fix $\epsilon>0$ and choose $n$ so that
		\[ \sum_{k=n}^{\infty}|x_{k1}|^2 < \epsilon.\]
		Now choose $m$ so that
		\[ \sum_{k=1}^{n-1}|x_{km}|^2< \epsilon.\]
		Let $$\tilde{e}_{1m}:=(e_m; \vec{0};\vec{0}, \ldots)\in \tilde{\HH}.$$ 
		Then we have
		\begin{align*}
		\sum_{k=1}^{\infty}|\langle \tilde{e}_{1m},\tilde{x}_k\rangle|^2
		&= \sum_{k=1}^{\infty}|x_{k1}|^2|x_{km}|^2\\
		&= \sum_{k=1}^{n-1}|x_{k1}|^2|x_{km}|^2 + \sum_{k=n}^{\infty}|x_{k1}|^2|x_{km}|^2\\
		&\le \sum_{k=1}^{n-1}|x_{km}|^2+\sum_{k=n}^{\infty}|x_{k1}|^2\\
		&< 2 \epsilon.
		\end{align*}
		It follows that $\{\tilde{x}_k\}_{k=1}^{\infty}$ does not have a
		lower frame bound. 
	\end{proof}
	\begin{corollary}\label{Coro10}
		Let $\{x_k\}_{k=1}^\infty$ be a frame for $\ell_2$. If $\{\tilde{x}_k\}_{k=1}^\infty$ is a frame sequence, then $\{x_k\}_{k=1}^\infty$ cannot be injective.
	\end{corollary}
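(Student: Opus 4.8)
The plan is to combine the injectivity classification theorems (Theorem \ref{Real inf} in the real case, Theorem \ref{complex infinite} in the complex case) with the negative assertion of Theorem \ref{thm103}. The key point is simply to unwind what "frame sequence" means: $\{\tilde{x}_k\}_{k=1}^\infty$ being a frame sequence says exactly that it is a frame for its own closed span $W := \overline{\spn}\{\tilde{x}_k\}_{k=1}^\infty \subseteq \tilde{\HH}$, i.e. there are constants $0 < A \le B < \infty$ with $A\|y\|^2 \le \sum_k |\langle y, \tilde{x}_k\rangle|^2 \le B\|y\|^2$ for all $y \in W$.

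First I would argue by contradiction: suppose $\{x_k\}_{k=1}^\infty$ is injective. Since $\{x_k\}_{k=1}^\infty$ is a frame for $\ell_2$, Theorem \ref{Real inf} (respectively Theorem \ref{complex infinite}) forces $\overline{\spn}\{\tilde{x}_k\}_{k=1}^\infty = \tilde{\HH}$; that is, $W = \tilde{\HH}$. Combining this with the frame-sequence hypothesis, we conclude that $\{\tilde{x}_k\}_{k=1}^\infty$ is in fact a frame for all of $\tilde{\HH}$.

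On the other hand, a frame for $\ell_2$ is in particular a Bessel sequence, so Theorem \ref{thm103} applies to $\{x_k\}_{k=1}^\infty$ and tells us that $\{\tilde{x}_k\}_{k=1}^\infty$ is \emph{not} a frame for $\tilde{\HH}$. This contradicts the previous paragraph, so $\{x_k\}_{k=1}^\infty$ cannot be injective.

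There is essentially no genuine obstacle here: all of the substance is contained in Theorem \ref{thm103} and the span classification. The only thing requiring care is the bookkeeping — one must read "frame sequence" as "frame for $\overline{\spn}\{\tilde{x}_k\}$", observe that injectivity upgrades that span to the whole space $\tilde{\HH}$, and then note that this turns "frame sequence" into "frame for $\tilde{\HH}$", which Theorem \ref{thm103} has already ruled out.
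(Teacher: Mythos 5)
Your argument is correct and is essentially identical to the paper's proof: both argue by contradiction, use Theorem \ref{Real inf} (resp.\ Theorem \ref{complex infinite}) to upgrade the closed span to all of $\tilde{\HH}$, so that the frame sequence becomes a frame for $\tilde{\HH}$, contradicting Theorem \ref{thm103}. No issues.
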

	\begin{proof} We will prove the real case, the complex case is similar.
		
		Suppose by way of contradiction that $\{x_k\}_{k=1}^\infty$ is injective. Then by Theorem \ref{Real inf}, $\{\tilde{x}_k\}_{k=1}^\infty$ spans $\tilde{\HH}$. Thus, it is a frame for $\tilde{\HH}$, which contradicts Theorem \ref{thm103}.
	\end{proof}

\subsection{Constructing the Solutions to the Injectivity Problem}

For the construction of solutions to the injectivity problem,
we will follow the outline for the finite dimensional case.
But this construction is much more complicated because of
problems with convergence, problems with keeping the upper frame
bound finite, and the fact that we cannot show spanning in
$\ell_2$ by just checking linear independence.  Also, we proved 
in the finite dimensional case that the $\tilde{x}_i$ span
by showing they are independent and have enough vectors to
span $\tilde{\HH}$.  This does not work in the infinite dimensional case.  Note that the
following construction works for trace class operators and for
Hilbert Schmidt operators.

 \begin{theorem}\label{inf ex}
	Let $\{e_i\}_{i=1}^\infty$ be the canonial basis for the real Hilbert space $\ell_2$ and let $a_i\not=0$ for $i=1, 2, \ldots$ be such that $\sum_{i=1}^{\infty}a^2_i<\infty.$ Define 
	$$x_k=a_k(e_1+e_{k+1}), \mbox{ for } k=1, 2, \ldots.$$ Let $L$ be the right shift operator on $\ell_2$. 
	Then the family 
	\[ \{e_{i}\}_{i=1}^{\infty}\cup \{\frac{1}{2^i}L^ix_k\}_{i=0,k=1}^{\infty, \ \ \infty}\]
	is a frame for $\ell_2$ which gives injectivity.
\end{theorem}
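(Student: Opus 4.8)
The plan is to check the two assertions separately: that the displayed family is a frame for $\ell_2$, and that it gives injectivity in the sense of the Injectivity Problem.

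\textbf{Frame property.} Since $\{e_i\}_{i=1}^\infty$ is an orthonormal basis, it is already a Parseval frame, so it is enough to show the remaining vectors $\{\tfrac1{2^i}L^ix_k : i\ge 0,\ k\ge 1\}$ form a Bessel sequence; then the full family has lower frame bound $1$ and finite upper bound $1+(\text{Bessel bound})$. Here I would use that $L$ is an isometry and $\|x_k\|^2 = a_k^2\|e_1+e_{k+1}\|^2 = 2a_k^2$, so that $\|\tfrac1{2^i}L^ix_k\|^2 = 2a_k^2 4^{-i}$ and hence $\sum_{i\ge 0}\sum_{k\ge 1}\|\tfrac1{2^i}L^ix_k\|^2 = 2\big(\sum_k a_k^2\big)\big(\sum_{i\ge 0}4^{-i}\big) = \tfrac83\sum_k a_k^2 < \infty$. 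By $|\langle x,f\rangle|\le\|x\|\,\|f\|$ this bounds $\sum_{i,k}|\langle x,\tfrac1{2^i}L^ix_k\rangle|^2$ by $\tfrac83\big(\sum_k a_k^2\big)\|x\|^2$, giving the Bessel bound.

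\textbf{Injectivity.} Let $T$ be a Hilbert--Schmidt self-adjoint operator with $\langle Tf,f\rangle = 0$ for every member $f$ of the family, and write $T_{pq} = \langle Te_q,e_p\rangle$. The conditions from the vectors $e_i$ give $T_{ii}=0$ for all $i$. For the others, note $L^ix_k = a_k(e_{i+1}+e_{k+i+1})$, so (the scalar $4^{-i}$ being irrelevant) the condition $\langle TL^ix_k,L^ix_k\rangle = 0$ expands, using self-adjointness to merge the two cross terms and the vanishing of the diagonal entries, to $2a_k^2\,T_{(i+1)(k+i+1)} = 0$; since $a_k\neq 0$ this forces $T_{(i+1)(k+i+1)} = 0$. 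The key point is that as $(i,k)$ runs over $\{0,1,2,\dots\}\times\{1,2,\dots\}$, the pair $(i+1,\,k+i+1)$ runs over \emph{every} index pair $(p,q)$ with $1\le p<q$ (take $i=p-1$, $k=q-p$). Hence every off-diagonal entry of $T$ vanishes, and together with $T_{ii}=0$ and self-adjointness this yields $Te_q = 0$ for all $q$, i.e. $T=0$.

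I do not expect a genuine obstacle. The only points requiring a little care are the convergence in the Bessel estimate (immediate, since all the sums involved have nonnegative terms and the geometric and $\ell_2$ series converge) and the observation that the shifted pairs $(i+1,k+i+1)$ exhaust all off-diagonal positions — which is exactly the feature the right shift $L$ was introduced to produce. (Alternatively one could invoke Theorem~\ref{Real inf} and verify $\overline{\spn}\{\tilde{x}_k\}=\tilde{\HH}$, but the direct computation above is shorter and cleaner.)
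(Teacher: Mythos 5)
Your proposal is correct and follows essentially the same route as the paper: the Bessel bound via the term-by-term Cauchy--Schwarz estimate $\sum_{i,k}\|\tfrac{1}{2^i}L^ix_k\|^2<\infty$ together with the orthonormal basis supplying the lower bound, and injectivity by expanding $\langle T(L^ix_k),L^ix_k\rangle=2a_k^2\langle Te_{i+1},e_{i+k+1}\rangle$ and noting that the shifted index pairs exhaust all off-diagonal positions. No gaps.
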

\begin{proof}
	First we need to see that our family of vectors forms a frame for $\ell_2$. Since our family contains an orthonormal basis for
	$\ell_2$, we automatically have a lower frame bound.  So we need
	to check that our family is Bessel, and since $\{e_i\}_{i=1}^{\infty}$ is already Bessel, we only need to check that 
	$\{\frac{1}{2^i}L^ix_k\}_{i=0, k=1}^{ \infty, \ \ \infty}$ is Bessel.

	For any $x\in \ell_2$, we have
	\begin{align*}
	\sum_{i=0}^{\infty}\sum_{k=1}^{\infty}|\langle x,\frac{1}{2^i}L^{i}x_k\rangle|^2&\le\sum_{i=0}^{\infty}\sum_{k=1}^{\infty}\dfrac{1}{4^i}\Vert x\Vert^2\Vert L^{i}x_k\Vert^2\\
	&\le\sum_{i=0}^{\infty}\sum_{k=1}^{\infty}\dfrac{1}{4^i}\Vert x\Vert^24a_k^2\\
	&=\left(\sum_{i=0}^{\infty}\dfrac{1}{4^{i-1}}\sum_{k=1}^{\infty}a_k^2\right)\Vert x\Vert^2.
	\end{align*}
	
	So our family is a Bessel sequence.
	
	To see our frame is injective, let $T$ be a Hilbert Schmidt self-adjoint operator such that 
	$$\langle Te_k, e_k\rangle=0  \mbox{ and } \langle T(L^ix_k), L^ix_k\rangle=0, \mbox{ for } i=0,1 \ldots;\ k=1, 2, \ldots.$$
	Note that 
	$$L^ix_k=a_k(e_{1+i}+ e_{1+i+k}) \mbox{ for all }  i, k.$$
	Hence
	\begin{align*}
	\langle T(L^ix_k), L^ix_k\rangle&= a_k^2\langle Te_{1+i},e_{1+i}\rangle+2a_k^2\langle Te_{1+i}, e_{1+i+k} \rangle+a_k^2\langle Te_{1+i+k},e_{1+i+k}\rangle\\
	&=2a_k^2\langle Te_{1+i}, e_{1+i+k} \rangle,
	\end{align*}
	for all $i, k$.
	
	This implies $\langle Te_j, e_k\rangle=0$ for all $j, k=1, 2, \ldots,$ and hence $T=0$.
\end{proof}
The complex version of this construction looks like:

\begin{theorem}
	Let $\{e_i\}_{i=1}^{\infty}$ be the canonical orthonormal basis
	for complex $\ell_2$, and let $\{a_i\}_{i=1}^{\infty}, \{b_i\}_{i=1}^\infty\in \ell_2$,   $|a_i|, |b_i|\not=0$ for all $i$. Then the following frame gives
	injectivity:
	\[ \{e_i\}_{i=1}^{\infty}\cup \{\dfrac{1}{2^i}L^i(a_k(e_1+e_{k+1}))\}_{i=0,k=1}^{\infty}
	\cup \{\dfrac{1}{2^i}L^i(b_k(e_1+\iota e_{k+1}))\}_{i=0,k=1}^{\infty}.\]
	
\end{theorem}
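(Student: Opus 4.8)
The plan is to follow the template of the real construction in Theorem \ref{inf ex}, but now using two shifted families — one built from the vectors $a_k(e_1+e_{k+1})$ and one from $b_k(e_1+\iota e_{k+1})$ — so that together they force \emph{all} entries $\langle Te_j,e_k\rangle$ of a Hilbert Schmidt self-adjoint $T$ to vanish, including the imaginary parts of the off-diagonal entries. First I would verify that the family is a frame: since it contains the orthonormal basis $\{e_i\}$ it has a lower frame bound automatically, so only the Bessel property of the two shifted families needs checking. This is the same estimate as in the real case: for any $x\in\ell_2$,
\[
\sum_{i=0}^{\infty}\sum_{k=1}^{\infty}\Bigl|\bigl\langle x,\tfrac{1}{2^i}L^i(a_k(e_1+e_{k+1}))\bigr\rangle\Bigr|^2
\le \sum_{i=0}^{\infty}\frac{1}{4^i}\,\|x\|^2\sum_{k=1}^{\infty}\|a_k(e_1+e_{k+1})\|^2
=\Bigl(\tfrac{8}{3}\sum_{k=1}^\infty|a_k|^2\Bigr)\|x\|^2,
\]
using $\|L^i y\|=\|y\|$ and $\|e_1+e_{k+1}\|^2=2$; the $b_k$-family is bounded identically using $\{b_i\}\in\ell_2$. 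Summing the three Bessel bounds gives an upper frame bound, so we indeed have a frame.

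Next I would prove injectivity. Let $T$ be Hilbert Schmidt and self-adjoint with $\langle Tx_\ell,x_\ell\rangle=0$ for every vector $x_\ell$ in the frame. Testing $T$ against each $e_i$ gives $\langle Te_i,e_i\rangle=0$ for all $i$, so all diagonal entries vanish. Now set $y_k=a_k(e_1+e_{k+1})$; since $L^i y_k=a_k(e_{1+i}+e_{1+i+k})$ and $a_k\neq 0$, expanding as in Theorem \ref{inf ex} and using that $T$ is self-adjoint with zero diagonal,
\[
0=\langle T(L^i y_k),L^i y_k\rangle
=|a_k|^2\bigl(\langle Te_{1+i+k},e_{1+i}\rangle+\langle Te_{1+i},e_{1+i+k}\rangle\bigr)
=2|a_k|^2\,\R\langle Te_{1+i},e_{1+i+k}\rangle.
\]
Letting $i$ and $k$ range over all admissible values, this forces $\R\langle Te_p,e_q\rangle=0$ for every pair $p<q$. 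Then I would run the same argument with $z_k=b_k(e_1+\iota e_{k+1})$: here $L^i z_k=b_k(e_{1+i}+\iota e_{1+i+k})$, and
\[
0=\langle T(L^i z_k),L^i z_k\rangle
=|b_k|^2\bigl(\iota\langle Te_{1+i+k},e_{1+i}\rangle-\iota\langle Te_{1+i},e_{1+i+k}\rangle\bigr)
=2|b_k|^2\,\I\langle Te_{1+i},e_{1+i+k}\rangle,
\]
where I have used $\langle Te_{1+i+k},e_{1+i}\rangle=\overline{\langle Te_{1+i},e_{1+i+k}\rangle}$ and that the real parts already vanish. Since $b_k\neq 0$, this gives $\I\langle Te_p,e_q\rangle=0$ for all $p<q$. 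Combining, $\langle Te_p,e_q\rangle=0$ for all $p\le q$, hence (by self-adjointness) for all $p,q$, so $T=0$.

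The main obstacle, such as it is, lies in the bookkeeping of the off-diagonal indices: one must check that as $(i,k)$ runs over $\{0,1,2,\dots\}\times\{1,2,\dots\}$ the pairs $(1+i,\,1+i+k)$ exhaust \emph{all} pairs $(p,q)$ with $1\le p<q$ — which they do, taking $i=p-1$ and $k=q-p$ — so that no entry of $T$ above the diagonal escapes. A secondary point worth a line is confirming that the $\R$- and $\I$-extraction in the two displays is exactly right given that $T$ already has vanishing diagonal and vanishing real off-diagonal parts by the time we invoke the $b_k$-family; this sequencing is what makes the imaginary-part identity come out cleanly. Everything else is a routine adaptation of Theorem \ref{inf ex}.
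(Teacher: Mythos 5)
Your proof is correct and is exactly the intended adaptation of the paper's argument for the real case (Theorem \ref{inf ex}): the paper states the complex version without proof, and the natural proof is precisely what you give — the $a_k$-family kills the real parts of the off-diagonal entries, the $b_k$-family kills the imaginary parts, and the index bookkeeping $(p,q)=(1+i,1+i+k)$ covers all $p<q$. Your Bessel estimate and the two inner-product expansions check out, so there is nothing to add.
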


The above frames are unbounded.  
The following theorem shows that we can easily adjust unbounded
injective frames to produce bounded injective frames.

\begin{theorem}
	Every injective frame $\mathcal{X}=\{e_i\}_{i=1}^\infty\cup\{x_k\}_{k=1}^{\infty}$ of finitely
	supported vectors, induces a bounded injective frame.  
\end{theorem}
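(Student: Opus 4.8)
The plan is to keep the orthonormal basis $\{e_i\}_{i=1}^\infty$ untouched and to replace each (possibly short) vector $x_k$ by the \emph{pair} $x_k\pm e_{N_k}$, where $N_k$ is a ``fresh'' index depending on $k$. The basis vectors are already unit vectors, the perturbed vectors have norm $\sqrt{\|x_k\|^2+1}\ge 1$, and the $\pm$ cancellation will let us recover the original orthogonality relations $\langle Tx_k,x_k\rangle=0$, so that injectivity is preserved.

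Concretely, first I would choose natural numbers $N_1,N_2,\ldots$ inductively so that $N_k\notin\mathrm{supp}(x_k)$ and $N_k\ne N_j$ for $j\ne k$; this is possible because each $\mathrm{supp}(x_k)$ is finite. Set $y_k^{\pm}=x_k\pm e_{N_k}$ and $\mathcal Y=\{e_i\}_{i=1}^\infty\cup\{y_k^{+},y_k^{-}\}_{k=1}^\infty$. Since $N_k\notin\mathrm{supp}(x_k)$ we have $\|y_k^{\pm}\|^2=\|x_k\|^2+1\ge 1$, and $\|e_i\|=1$, so $\mathcal Y$ is bounded below in norm, and every vector in $\mathcal Y$ is finitely supported. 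To see that $\mathcal Y$ is a frame, note it contains the Parseval family $\{e_i\}$, which gives a lower frame bound $1$; for the upper bound, the identity $|\langle y,x_k+e_{N_k}\rangle|^2+|\langle y,x_k-e_{N_k}\rangle|^2=2|\langle y,x_k\rangle|^2+2|y_{N_k}|^2$, summed over $k$, together with the fact that the original frame is Bessel (with upper bound $B$, so $\sum_k|\langle y,x_k\rangle|^2\le B\|y\|^2$) and that $\{e_{N_k}\}_k$ is an orthonormal system (so $\sum_k|y_{N_k}|^2\le\|y\|^2$), yields $\sum_{z\in\mathcal Y}|\langle y,z\rangle|^2\le(2B+3)\|y\|^2$.

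It remains to check that $\mathcal Y$ is injective. Let $T$ be a Hilbert--Schmidt self-adjoint operator with $\langle Tz,z\rangle=0$ for all $z\in\mathcal Y$. From $\{e_i\}\subset\mathcal Y$ we get $\langle Te_i,e_i\rangle=0$ for all $i$, in particular $\langle Te_{N_k},e_{N_k}\rangle=0$. Expanding
\[\langle T(x_k\pm e_{N_k}),\,x_k\pm e_{N_k}\rangle=\langle Tx_k,x_k\rangle\pm 2\,\R\langle Tx_k,e_{N_k}\rangle+\langle Te_{N_k},e_{N_k}\rangle=0,\]
and adding the equations for the two signs, we obtain $\langle Tx_k,x_k\rangle=0$ for every $k$. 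Hence $\langle Tz,z\rangle=0$ for every $z$ in the original injective frame $\{e_i\}\cup\{x_k\}$, so $T=0$; thus $\mathcal Y$ is a bounded injective frame induced by the given one (and it is again of the same shape: an orthonormal basis together with finitely supported vectors).

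There is no genuine obstacle here. The only two points that require a little care are: (i) arranging the $N_k$ so that they are simultaneously outside $\mathrm{supp}(x_k)$ (needed for the norm lower bound $\|y_k^{\pm}\|\ge1$) and pairwise distinct (needed so that $\sum_k|y_{N_k}|^2\le\|y\|^2$ in the Bessel estimate); and (ii) observing that a single translate $x_k+e_{N_k}$ would \emph{not} suffice, since then one only controls $\langle Tx_k,x_k\rangle+2\R\langle Tx_k,e_{N_k}\rangle$ — one genuinely needs both $x_k+e_{N_k}$ and $x_k-e_{N_k}$ in order to cancel the cross term $\R\langle Tx_k,e_{N_k}\rangle$ and recover $\langle Tx_k,x_k\rangle=0$.
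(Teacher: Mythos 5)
Your proof is correct and is essentially the paper's own argument: the paper likewise replaces each $x_k$ by the pair $x_k\pm e_{n_k}$ with $n_k$ chosen beyond the support of $x_k$, and recovers injectivity from the identity $\tilde{y}_{2k}+\tilde{y}_{2k+1}=2\tilde{x}_k+2\tilde{e}_{n_k}$, which is exactly your cancellation of the cross term $\R\langle Tx_k,e_{N_k}\rangle$ written in tilde-vector form. Your explicit Bessel-bound computation merely fills in a step the paper asserts as clear.
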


\begin{proof}
	 Recall that for each $k$, we denote
	$$x_{k}=(x_{k1}, x_{k2},\ldots, x_{ki}, \ldots).$$ Choose integer $n_1 < n_2 < \cdots$
	so that 
	\[ \max\{i:x_{ki}\not= 0\}< n_k.\]
	For $k=1,2,\ldots$ let
	\[ y_{2k}=x_k+e_{n_k}\mbox{ and }y_{2k+1}=x_k-e_{n_k},
	\mbox{ for }k=1,2,\ldots.\]
	It is clear that $\mathcal{Y}=\{e_i\}_{i=1}^\infty\cup\{y_k\}_{k=1}^{\infty}$ is still a frame and
	$\|y_k\|\geq 1$ for all $k=1,2,\ldots$.  Since 
	$\tilde{y}_{2k}+\tilde{y}_{2k+1}=2 \tilde{x}_k+2\tilde{e}_{n_k}$, and $\{\tilde{e}_{n_k}\}_{k=1}^{\infty}$ are vectors in
	our set, it follows that $\{\tilde{x}_k\}_{k=1}^{\infty}$
	is in our set of vectors and so 
	$\mathcal{Y}$ is injective.
\end{proof}

\subsection{The Solutions are Neither Open nor Dense}

In this section we will show that the solutions to the injectivity problem in infinite dimensions are neither open nor
dense in the class of frames.  

First we need a definition:

\begin{definition}
	Given frames $\mathcal{X}=\{x_k\}_{k=1}^{\infty}$ and
	$\mathcal{Y}=\{y_k\}_{k=1}^{\infty}$ for $\ell_2$, we define the {\bf distance
		between them} by
	\[ d^2(\mathcal{X},\mathcal{Y}) = \sum_{k=1}^{\infty}
	\|x_k-y_k\|^2.\]
	Note that this distance may be infinity.
\end{definition}

The following theorem shows that the frames which give injectivity are not open in the
family of frames for $\ell_2$.

\begin{theorem} Let $ \mathcal{X}=\{e_{i}\}_{i=1}^{\infty}\cup \{\frac{1}{2^i}L^ix_k\}_{i=0,k=1}^{\infty}$ be the injective frame for the real space $\ell_2$ as in Theorem \ref{inf ex}. Then for any $\epsilon>0$, there is a frame $\mathcal{Y}$ such that $d(\mathcal{X}, \mathcal{Y})<\epsilon$, and $\mathcal{Y}$ is not injective.
\end{theorem}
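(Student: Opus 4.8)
The plan is to destroy injectivity by an arbitrarily small perturbation that exploits the decay of the weights $2^{-i}$. The point is that in $\mathcal{X}$ each individual off-diagonal entry of a candidate self-adjoint operator $T$ is ``tested'' by only one frame vector: since $L^{i}x_k=a_k(e_{1+i}+e_{1+i+k})$, expanding the quadratic form of $T$ on such a vector involves only $\langle Te_{1+i},e_{1+i}\rangle$, $\langle Te_{1+i+k},e_{1+i+k}\rangle$ and $\langle Te_{1+i},e_{1+i+k}\rangle$, while the diagonal entries $\langle Te_j,e_j\rangle$ are already pinned down by the vectors $e_j$. So, for a fixed $i_0\ge 0$ (and $k_0=1$), the entry $\langle Te_{1+i_0},e_{2+i_0}\rangle$ is detected only by the single frame vector $\tfrac{1}{2^{i_0}}L^{i_0}x_1=\tfrac{a_1}{2^{i_0}}(e_{1+i_0}+e_{2+i_0})$, whose norm tends to $0$ as $i_0\to\infty$. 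Replacing just this one small vector by a vector lying on the zero set of the associated quadratic form will leave a frame that is no longer injective.

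Concretely, given $\epsilon>0$, I would pick $i_0$ so large that $a_1/2^{i_0}<\epsilon$, write $v:=\tfrac{1}{2^{i_0}}L^{i_0}x_1=\tfrac{a_1}{2^{i_0}}(e_{1+i_0}+e_{2+i_0})$, and let $\mathcal{Y}$ be the family obtained from $\mathcal{X}$ by replacing $v$ with $v':=\tfrac{a_1}{2^{i_0}}e_{1+i_0}$ and leaving every other vector unchanged. Then $d(\mathcal{X},\mathcal{Y})=\|v-v'\|=a_1/2^{i_0}<\epsilon$. The first thing to check is that $\mathcal{Y}$ is still a frame: it still contains the orthonormal basis $\{e_i\}_{i=1}^{\infty}$, so it retains the lower frame bound $1$, and since $\|v'\|\le\|v\|$ the Bessel estimate in the proof of Theorem~\ref{inf ex} goes through verbatim, so the upper bound stays finite.

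To see that $\mathcal{Y}$ is not injective, consider the operator $T_0:=e_{1+i_0}e_{2+i_0}^{*}+e_{2+i_0}e_{1+i_0}^{*}$, which is nonzero, self-adjoint and finite rank (hence Hilbert--Schmidt, indeed trace class and trace zero). Its diagonal vanishes, so $\langle T_0e_j,e_j\rangle=0$ for all $j$ and $\langle T_0v',v'\rangle=(a_1/2^{i_0})^{2}\langle T_0e_{1+i_0},e_{1+i_0}\rangle=0$. For a vector $\tfrac{1}{2^{i}}L^{i}x_k$ with $(i,k)\neq(i_0,1)$, the expansion above gives $\langle T_0\,\tfrac{1}{2^{i}}L^{i}x_k,\tfrac{1}{2^{i}}L^{i}x_k\rangle=2(a_k/2^{i})^{2}\langle T_0e_{1+i},e_{1+i+k}\rangle$, and a one-line index check shows the only nonzero off-diagonal entry of $T_0$ forces $1+i=1+i_0$ and $1+i+k=2+i_0$, i.e. $(i,k)=(i_0,1)$, which is excluded. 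Hence $\langle T_0y,y\rangle=0$ for every $y\in\mathcal{Y}$ while $T_0\neq0$, so $\mathcal{Y}$ fails to be injective, as required.

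The argument is short, and the only real decision is which vector to perturb. One cannot usefully perturb the ``heavy'' vectors --- the $i=0$ vectors $x_k$ have norm $\sqrt{2}\,a_k$, and moving such a vector onto the zero set of its quadratic form is not a small move --- so it is essential to use the decaying tail; and it is essential to perturb by \emph{deleting a coordinate} rather than making a generic displacement, since that is exactly what lands the new vector in $\{x:\langle T_0x,x\rangle=0\}$. The only other point needing a word is that $\mathcal{Y}$ is a genuine frame, which is immediate from the presence of $\{e_i\}_{i=1}^{\infty}$. The complex case is entirely analogous.
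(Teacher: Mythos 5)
Your proof is correct. It rests on the same mechanism as the paper's: the vectors $\tfrac{1}{2^i}L^ix_k$ are the only ones testing off-diagonal matrix entries, and their norms decay geometrically in $i$, so an $\epsilon$-perturbation of the tail can remove the witness to some entry $\langle Te_j,e_l\rangle$. The difference is in the surgery and the verification. The paper sets \emph{all} vectors with $i>n_0$ to zero (using convergence of $\sum_{i,k}\|\tfrac{1}{2^i}L^ix_k\|^2$ to make this an $\epsilon$-move) and then appeals to the span characterization of Theorem \ref{Real inf} to conclude non-injectivity; you instead truncate a \emph{single} vector $v=\tfrac{a_1}{2^{i_0}}(e_{1+i_0}+e_{2+i_0})$ to $v'=\tfrac{a_1}{2^{i_0}}e_{1+i_0}$ and exhibit the explicit rank-two annihilator $T_0=e_{1+i_0}e_{2+i_0}^*+e_{2+i_0}e_{1+i_0}^*$, checking directly that its quadratic form vanishes on every member of $\mathcal{Y}$. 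Your version is more economical (one vector moved rather than infinitely many) and more self-contained (no appeal to the classification theorem, and the witness is even trace class with trace zero, so it also defeats the stricter notions of injectivity); the paper's version has the minor advantage of not depending on the combinatorial fact that each off-diagonal pair is tested by exactly one frame vector, since deleting the whole tail obviously leaves infinitely many untested entries. Both establish the statement.
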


\begin{proof}
	Let any $\epsilon>0$. Since the series $\sum_{i=0}^{\infty}\sum_{k=1}^{\infty}\|\frac{1}{2^i}L^ix_k\|^2$ converges, for any $\epsilon$, there exists $n_0$ such that 
	\[ \sum_{i= n_0+1}^\infty\sum_{k=1}^{\infty}\|\frac{1}{2^i}L^ix_k\|^2< \epsilon^2.\]
	Let
	$y_{ik} = \frac{1}{2^i}L^ix_k$
	for $i=0,1,\ldots,n_0$ and $k=1,2,\ldots$, and
	$y_{ik}=0$ otherwise.  It is clear that 
	\[\mathcal{Y}= \{e_i\}_{i=1}^{\infty} \cup \{y_{ik}\}_{i=0, k=1}^{\infty}\]
	cannot give injectivity by Theorem \ref{Real inf} while
	\[ d^2(\mathcal{X}, \mathcal{Y})=\sum_{i= n_0+1}^\infty\sum_{k=1}^{\infty}\|\frac{1}{2^i}L^ix_k\|^2<\epsilon^2.\]
	This completes the proof.
\end{proof}

\begin{remark}
	There is a perturbation theory for frames which looks like it
	should apply here.  The problem is that although our vectors
	form a frame for $\ell_2$, their tilde vectors do not form a
	frame to $\tilde{\HH}$ and so the theory does not apply.
\end{remark}

To show the solutions are not dense, 
we need the definition of a Riesz sequence in $\ell_2$.

\begin{definition}
A family of vectors $\{x_i\}_{i=1}^{\infty}$ in the real or complex Hilbert space $\ell_2$ is called
 a {\bf Riesz sequence} if there are constants $0<A\le B <
\infty$ so that for all $\{a_i\}_{i=1}^{\infty}\subset \ell_2$ we
have:
\[ A \sum_{i=1}^{\infty}|a_i|^2 \le \|\sum_{i=1}^{\infty}a_ix_i\|^2
\le B \sum_{i=1}^{\infty}|a_i|^2.\]
The constants $A,B$ are called the {\bf lower and upper Riesz bounds}.  If the vectors span $\ell_2$, this is called a
{\bf Riesz basis}. 
\end{definition}

\begin{remark}
It is known \cite{C1,Ch} that a Riesz basis is a frame and the Riesz
bounds are the frame bounds.  Also, $\{x_i\}_{i=1}^{\infty}$ is
a Riesz sequence if and only if the operator $T:\ell_2
\rightarrow \ell_2$ given by $Te_i=x_i$ is a bounded, linear,
invertible operator (on its range).
\end{remark}

Also, we need a perturbation result from frame theory.

\begin{proposition}
Assume $\chi=\{x_i\}_{i=1}^{\infty}$ are vectors in the real or complex space $\ell_2$ satisfying:
\[ \sum_{i=1}^{\infty}\|e_i-x_i\|^2 < \epsilon^2.\]
Then $\chi$ is a Riesz sequence in $\ell_2$ with lower Riesz bound
$(1-\epsilon)^2$.
\end{proposition}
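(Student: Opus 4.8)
The plan is to run the classical Paley--Wiener perturbation estimate, comparing $\chi=\{x_i\}_{i=1}^\infty$ against the orthonormal basis $\{e_i\}_{i=1}^\infty$ coordinate by coordinate; we may of course assume $\epsilon<1$, since otherwise the asserted lower bound is not a meaningful Riesz bound. First I would fix a finitely supported scalar sequence $\{a_i\}$ and write, by the triangle inequality,
\[ \left\|\sum_i a_i x_i\right\| \ge \left\|\sum_i a_i e_i\right\| - \left\|\sum_i a_i(e_i-x_i)\right\|. \]
Since $\{e_i\}$ is orthonormal, the first term on the right equals $\bigl(\sum_i |a_i|^2\bigr)^{1/2}$. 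For the second term, apply the triangle inequality again and then Cauchy--Schwarz:
\[ \left\|\sum_i a_i(e_i-x_i)\right\| \le \sum_i |a_i|\,\|e_i-x_i\| \le \left(\sum_i |a_i|^2\right)^{1/2}\left(\sum_i \|e_i-x_i\|^2\right)^{1/2} < \epsilon\left(\sum_i |a_i|^2\right)^{1/2}. \]
Combining these gives $\left\|\sum_i a_i x_i\right\| \ge (1-\epsilon)\bigl(\sum_i |a_i|^2\bigr)^{1/2}$, and the identical computation with a plus sign in the first display yields $\left\|\sum_i a_i x_i\right\| \le (1+\epsilon)\bigl(\sum_i |a_i|^2\bigr)^{1/2}$.

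Next I would remove the finiteness assumption on $\{a_i\}$. Applying the upper estimate to a tail $\sum_{i=N}^{M} a_i x_i$ of the partial sums shows that for every $\{a_i\}\in\ell_2$ the series $\sum_i a_i x_i$ is Cauchy, hence converges in $\ell_2$; and both the lower and the upper inequalities persist in the limit by continuity of the norm. This is exactly the statement that $\chi$ is a Riesz sequence in $\ell_2$ with lower Riesz bound $(1-\epsilon)^2$ (and, incidentally, upper Riesz bound $(1+\epsilon)^2$, so in particular $\chi$ is Bessel).

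There is no serious obstacle here: the whole argument rests on the single Cauchy--Schwarz estimate for $\sum_i |a_i|\,\|e_i-x_i\|$, and the only point needing a word of care is the routine passage from finitely supported scalar sequences to all of $\ell_2$, which the already-established upper bound handles automatically.
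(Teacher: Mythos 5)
Your argument is correct and is essentially the paper's proof: the same triangle-inequality split against the orthonormal basis followed by Cauchy--Schwarz on $\sum_i |a_i|\,\|e_i-x_i\|$, with the paper leaving the upper bound and the passage from finitely supported sequences to all of $\ell_2$ implicit where you spell them out.
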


\begin{proof}
We compute for scalars $\{a_i\}_{i=1}^{\infty}$,
\begin{align*}
\|\sum_{i=1}^{\infty}a_ix_i\| &\ge \|\sum_{i=1}^{\infty}a_ie_i\|-
\|\sum_{i=1}^{\infty}a_i(e_i-x_i)\|\\
&\ge \left (\sum_{i=1}^{\infty}|a_i|^2\right )^{1/2}-
\sum_{i=1}^{\infty}|a_i|\|e_i-x_i\|\\
&\ge \left (\sum_{i=1}^{\infty}|a_i|^2\right )^{1/2}-
\left ( \sum_{i=1}^{\infty}|a_i|^2 \right )^{1/2}\left (
\sum_{i=1}^{\infty}\|e_i-x_i\|^2 \right )^{1/2}\\
&\ge \left (\sum_{i=1}^{\infty}|a_i|^2\right )^{1/2}(1-\epsilon)
\end{align*}
The upper Riesz bound is done similarly.
\end{proof}

We also need a theorem from \cite{CaK}.

\begin{theorem}\label{thm7}
Let $Y,Z$ be subspaces of a Banach space $X$.  If $T:Y\rightarrow Z$
is a surjective linear operator with $\|I-T\|<1$, then
$codim_XY=codim_XZ$.
\end{theorem}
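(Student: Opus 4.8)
The plan is to read the statement as an instance of the stability of the semi-Fredholm index — equivalently, of the cokernel dimension of a bounded-below operator — under a perturbation smaller than the reduced minimum modulus; the crucial point is that the reduced minimum modulus of an isometric embedding is exactly $1$, matching the hypothesis $\|I-T\|<1$. First I would do the bookkeeping. Since $I-T$ is bounded, so is $T$, and for every $y\in Y$ we have $\|Ty\|\ge\|y\|-\|(I-T)y\|\ge(1-\|I-T\|)\|y\|$; thus $T$ is bounded below, hence an isomorphism of $Y$ onto $Z$ (so $Z$ is closed) with $\|T^{-1}\|\le(1-\|I-T\|)^{-1}$. Writing $\iota_Y:Y\hookrightarrow X$, $\iota_Z:Z\hookrightarrow X$ for the inclusions, put $j:=\iota_Z\circ T:Y\to X$; then $j$ is injective with range exactly $Z$, and $\|\iota_Y-j\|=\sup_{\|y\|=1}\|y-Ty\|=\|I-T\|<1$. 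Since $\operatorname{codim}_X Y=\dim\bigl(X/\iota_Y(Y)\bigr)$ and $\operatorname{codim}_X Z=\dim\bigl(X/j(Y)\bigr)$, the theorem is precisely the assertion that two norm-close injective operators $Y\to X$ have cokernels of the same dimension.

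Next I would invoke the classical semi-Fredholm perturbation theorem. For a bounded-below operator $A:Y\to X$ let $\gamma(A)=\inf\{\|Ay\|/\|y\|:y\neq0\}$; for the isometry $\iota_Y$ one has $\gamma(\iota_Y)=1$. The theorem (see e.g. Kato, \emph{Perturbation Theory for Linear Operators}, Ch.\ IV) states that if $A$ is bounded below and $\|A-B\|<\gamma(A)$, then $B$ is bounded below and $\dim\bigl(X/B(Y)\bigr)=\dim\bigl(X/A(Y)\bigr)$: both are semi-Fredholm with equal index, and since both kernels are $\{0\}$ the index equals minus the cokernel dimension (the sharp form of the theorem also yields the deficiency equality in the infinite-codimension case). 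Applying this with $A=\iota_Y$, $B=j$, and $\|\iota_Y-j\|=\|I-T\|<1=\gamma(\iota_Y)$, we get $\operatorname{codim}_X Z=\operatorname{codim}_X Y$.

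If a self-contained argument is wanted, I would split the equality into two inequalities. The inequality $\operatorname{codim}_X Z\le\operatorname{codim}_X Y$ is soft: passing to $X^*$, both $\iota_Y^*$ and $j^*$ are metric surjections onto $Y^*$ with $\ker\iota_Y^*=Y^\perp$ and $\ker j^*=Z^\perp$; for $f\in Z^\perp$ one has $0=f(Ty)=f(y)+f((T-I)y)$ for all $y\in Y$, hence $\operatorname{dist}(f,Y^\perp)=\|f|_Y\|\le\|I-T\|\,\|f\|$, so every unit vector of $Z^\perp$ lies within distance $\|I-T\|<1$ of $Y^\perp$, which forces $\dim Z^\perp\le\dim Y^\perp$. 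The reverse inequality is the crux, and it is exactly here that the threshold $1$ (not $\tfrac12$) matters: the naive symmetrization gives only $\|z-T^{-1}z\|\le\frac{\|I-T\|}{1-\|I-T\|}\|z\|$, which need not be $<\|z\|$, so one cannot simply interchange the roles of $Y$ and $Z$. Instead one uses the open-mapping / minimum-modulus mechanism: because $\gamma(\iota_Y)=1>\|I-T\|$, a Neumann-type iteration produces, for each $x\in X$, a correction lying in $\iota_Y(Y)$ that matches $x$ modulo $j(Y)$, and this lets one identify the two cokernels dimension by dimension (in finite codimension this step is elementary linear algebra once the iteration supplies a bounded partial inverse). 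I expect this reverse inequality — especially for infinite codimension — to be the main obstacle, and the cleanest route is to invoke the semi-Fredholm perturbation theorem in its sharp form rather than to re-prove it.
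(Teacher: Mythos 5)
The paper does not prove this statement at all: it is imported verbatim from Casazza--Kalton \cite{CaK} (``We also need a theorem from \cite{CaK}''), so there is no internal proof to compare yours against. Judged on its own, your main argument is correct and is the standard route: after observing that $T$ is bounded and bounded below (hence $Z=T(Y)$ is closed and $T$ is an isomorphism onto $Z$), you recast the statement as comparing the two injective, closed-range operators $\iota_Y$ and $j=\iota_Z\circ T$ from $Y$ into $X$, note $\|\iota_Y-j\|=\|I-T\|<1=\gamma(\iota_Y)$, and invoke the semi-Fredholm stability theorem. That single application already finishes the job: it gives $\operatorname{nul}(j)=0$, $\operatorname{def}(j)\le\operatorname{def}(\iota_Y)$, and index preservation, which yields equality of the codimensions in the finite case and forces both to be infinite otherwise --- so, as you say, the asymmetry of the hypothesis (you cannot run the same estimate from $Z$ back to $Y$) is harmless. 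Two caveats. First, you are implicitly assuming $Y$ is a \emph{closed} subspace of the Banach space $X$; this is surely the intended reading (and is what holds in the paper's application, where $Y=\overline{\spn}\{\tilde{e}_k\}$), but it should be said, since completeness of $Y$ is what makes $Z$ closed and $\iota_Y$ semi-Fredholm. Second, your closing ``self-contained'' paragraph does not actually constitute a proof: the dual estimate gives only $\operatorname{codim}_XZ\le\operatorname{codim}_XY$ (and even that leans on the nontrivial lemma that $\delta(M,N)<1$ forces $\dim M\le\dim N$), while the reverse inequality is only gestured at via a ``Neumann-type iteration.'' So the proposal stands or falls with the citation of the sharp semi-Fredholm perturbation theorem --- which is a legitimate way to prove the result, and arguably more transparent than deferring to \cite{CaK} as the paper does, but you should present it as a proof by reduction to that theorem rather than suggest the elementary argument is within easy reach.
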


The next theorem shows that the solution set of the infinite dimensional
injectivity problem is not dense in the class of all
frames for $\ell_2$.

\begin{theorem}
Let $\{e_k\}_{k=1}^{\infty}$ be the canonical basis for the real space $\ell_2$ and $\mathcal{X}=\{x_k\}_{k=1}^{\infty}\subset \ell_2$ be such that 
\[ \sum_{k=1}^{\infty}\|e_k-x_k\|^2 \leq\frac{1}{8},\]
Then $\mathcal{X}$ is not injective.
\end{theorem}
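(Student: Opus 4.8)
The plan is to exhibit directly a nonzero Hilbert--Schmidt self-adjoint operator $T$ on $\ell_2$ with $\langle Tx_k,x_k\rangle=0$ for all $k$; by the very definition of injectivity this shows $\mathcal{X}$ is not injective, so we will not even need Theorem \ref{Real inf}. The point is that the hypothesis says precisely that the operator $A$ determined by $Ae_k=x_k$ is a small perturbation of the identity, hence invertible, and the substitution $x_k=Ae_k$ converts the relation $\langle Tx_k,x_k\rangle=0$ into a statement about the diagonal of $A^{*}TA$ in the canonical basis --- which is trivial to arrange.

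First I would check that $A$ is well defined, bounded and invertible. Put $E=A-I$, so $Ee_k=x_k-e_k$. Then $\|E\|\le\|E\|_{HS}=\big(\sum_{k=1}^{\infty}\|x_k-e_k\|^2\big)^{1/2}\le\tfrac{1}{2\sqrt2}<1$, so $E$ is Hilbert--Schmidt (in particular bounded), $A=I+E$ is bounded, and by the Neumann series $A$ is invertible with bounded inverse $A^{-1}$; likewise $A^{*}=I+E^{*}$ is invertible and $(A^{*})^{-1}=(A^{-1})^{*}$. This is the only place the bound $\tfrac18$ enters --- any bound forcing $\|E\|<1$ would do. (By the perturbation Proposition stated above, $\mathcal{X}$ is in fact a Riesz basis, but completeness is not needed here.)

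Next, fix a concrete nonzero Hilbert--Schmidt self-adjoint operator $S$ on the real space $\ell_2$ whose diagonal relative to $\{e_k\}$ vanishes, for instance $Se_1=e_2$, $Se_2=e_1$, and $Se_k=0$ for $k\ge 3$: then $S=S^{*}$, $\|S\|_{HS}^2=2$, $S\ne0$, and $\langle Se_k,e_k\rangle=0$ for every $k$. Set
\[ T:=(A^{*})^{-1}SA^{-1}=(A^{-1})^{*}SA^{-1}.\]
Since the Hilbert--Schmidt operators form a two-sided ideal, $\|T\|_{HS}\le\|(A^{-1})^{*}\|\,\|S\|_{HS}\,\|A^{-1}\|<\infty$, so $T$ is Hilbert--Schmidt; moreover $T^{*}=(A^{-1})^{*}S^{*}A^{-1}=T$, so $T$ is self-adjoint; and $T\ne0$ because $A^{*}TA=S\ne0$. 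Finally, for every $k$,
\[ \langle Tx_k,x_k\rangle=\langle TAe_k,Ae_k\rangle=\langle A^{*}TAe_k,e_k\rangle=\langle Se_k,e_k\rangle=0,\]
so $T$ is a nonzero Hilbert--Schmidt self-adjoint operator annihilated by all $x_k$, and $\mathcal{X}$ is not injective.

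There is essentially no obstacle; the two things to watch are (i) that the perturbation is controlled by the $\ell_2$-sum over $k$, which is exactly what yields a Hilbert--Schmidt, hence operator-norm, bound on $A-I$ strictly below $1$ and therefore the invertibility of $A$; and (ii) that conjugation by the bounded invertible operators $A,A^{*}$ preserves both the Hilbert--Schmidt class and self-adjointness, so transporting the trivial zero-diagonal operator $S$ back through $A$ stays inside the class of operators for which injectivity is defined. An alternative would be to argue that $\overline{\spn}\{\tilde x_k\}\ne\tilde{\HH}$ by comparing it with $\overline{\spn}\{\tilde e_k\}$ (which has infinite codimension in $\tilde{\HH}$) via Theorem \ref{thm7}, but the natural operator relating the two families of tilde vectors is quadratic in $A$, and the bookkeeping between the $\tilde{\HH}$-norm and the Hilbert--Schmidt norm makes that route less clean; the direct construction above sidesteps it.
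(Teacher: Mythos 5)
Your proof is correct, and it takes a genuinely different route from the paper's. The paper argues through its classification theorem (Theorem \ref{Real inf}): it estimates $\sum_{k}\|\tilde{e}_k-\tilde{x}_k\|^2\le \tfrac34$, concludes that $\{\tilde{x}_k\}$ is a Riesz sequence, builds the operator sending $\tilde{e}_k\mapsto\tilde{x}_k$ with $\|I-T\|<1$, and invokes the Casazza--Kalton codimension theorem (Theorem \ref{thm7}) to see that $\overline{\spn}\{\tilde{x}_k\}$ has infinite codimension in $\tilde{\HH}$, hence cannot be all of $\tilde{\HH}$. You instead work directly at the operator level: the hypothesis makes $A=I+E$ with $Ae_k=x_k$ an invertible perturbation of the identity, and conjugating a zero-diagonal Hilbert--Schmidt self-adjoint operator $S$ by $A^{-1}$ produces an explicit nonzero witness $T=(A^{-1})^{*}SA^{-1}$ with $\langle Tx_k,x_k\rangle=\langle Se_k,e_k\rangle=0$. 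All your steps check out: $\|E\|\le\|E\|_{HS}\le\tfrac{1}{2\sqrt2}<1$ gives invertibility of $A$ and $A^{*}$, the ideal property of the Hilbert--Schmidt class and the symmetry of conjugation keep $T$ in the right class, and $A^{*}TA=S\ne0$ forces $T\ne0$. Your argument is more elementary (it needs neither the tilde machinery nor the external perturbation theorem) and actually proves more: the conclusion holds whenever $\sum_k\|e_k-x_k\|^2<1$, whereas the paper's quadratic estimate on the tilde vectors is what forces the smaller constant $\tfrac18$. What the paper's route buys in exchange is the quantitative statement that $\overline{\spn}\{\tilde{x}_k\}$ has \emph{infinite} codimension in $\tilde{\HH}$, phrased in the same language as its classification theorem --- though your construction implicitly recovers this too, since $S\mapsto(A^{-1})^{*}SA^{-1}$ injects the infinite-dimensional space of zero-diagonal Hilbert--Schmidt self-adjoint operators into the space of witnesses.
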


\begin{proof}
Will will show that $\mathcal{X}$ does not satisfy Theorem \ref{Real inf}. Note that $codim_{\tilde{\HH}}\{\tilde{e}_k\}_{k=1}^{\infty}$ is infinite.  Also, $\{\tilde{e}_k\}_{k=1}^{\infty}$ is an orthonormal
sequence in $\tilde{\HH}$.  We have that
\[ \sum_{k=1}^{\infty}\|e_k-x_k\|^2 =
 \sum_{k=1}^{\infty}\left((1-x_{kk})^2 +\sum_{i\not= k}x_{ki}^2\right)
 \leq\frac{1}{8}.\]
 In particular, $\|x_k\|^2 \le 2$.
Let
\[ X=\overline{\spn}\{\tilde{e}_k\}_{k=1}^{\infty},\mbox{ and }
Y=\overline{\spn}\{\tilde{x}_k\}_{k=1}^{\infty}.\]
For each $k=1, 2, \ldots$ we have
\begin{align*}
\|\tilde{e}_k-\tilde{x}_k\|^2&=(1-x^2_{kk})^2+\sum_{j\geq k+1}(x_{kk}x_{kj})^2+\sum_{i\not=k}\sum_{j\geq i}(x_{ki}x_{kj})^2\\
&\leq(1-x_{kk})^2(2\|x_k\|^2+2)+\|x_k\|^2\sum_{j\geq k+1}x^2_{kj}+\|x_k\|^2\sum_{i\not=k}x^2_{ki}\\
&\leq 6\left((1-x_{kk})^2+\sum_{i\not=k}x^2_{ki}\right).
\end{align*}
Hence,
\begin{align*}
 \sum_{k=1}^{\infty}\|\tilde{e}_k-\tilde{x}_k\|^2&\le 6\sum_{k=1}^{\infty}\left((1-x_{kk})^2 +\sum_{i\not= k}x_{ki}^2\right)\leq \dfrac{3}{4}.
 \end{align*} 
 It follows that $\{\tilde{x}_k\}_{k=1}^{\infty}$ is a Riesz sequence. 
 
 Now we
define $T:X \rightarrow Y$ by: for $x=\sum_{k=1}^{\infty}\langle x, \tilde{e}_k\rangle\tilde{e}_k\in X$, 
\[ Tx=\sum_{k=1}^{\infty}\langle x, \tilde{e}_k\rangle\tilde{x}_k.\]
Since $T$ is mapping a Riesz sequence to a Riesz sequence, it follows
that $T$ is bounded and surjective.
Now,
\begin{align*}
\|(I-T)x\| &= \left \|\sum_{k=1}^{\infty}\langle x, \tilde{e}_k\rangle(\tilde{e}_k-\tilde{x}_k)
\right \|\\
&\leq \sum_{k=1}^{\infty}|\langle x, \tilde{e}_k\rangle|\|\tilde{e}_k-\tilde{x}_k\|\\
&\leq \left ( \sum_{k=1}^{\infty}|\langle x, \tilde{e}_k\rangle|^2 \right )^{1/2}
\left ( \sum_{k=1}^{\infty}\|\tilde{e}_k-\tilde{x}_k\|^2 \right )^{1/2}\\
&\leq \dfrac{\sqrt{3}}{2}\|x\|.
\end{align*}
Hence, $\|I-T\|<1$ and by Theorem \ref{thm7}, $codim_{\tilde{\HH}}Y = 
codim_{\tilde{\HH}}X=\infty.$

\end{proof}

\subsection{The Solution to the State Estimation Problem}

For the infinite dimensional case, the state estimation problem asks:
\vskip10pt
\noindent {\bf State Estimation Problem}: Given an injective Parseval frame $\{x_k\}_{k=1}^{\infty}$ for $\ell_2$, and a sequence of real numbers $a=\{a_k\}_{k=1}^\infty$, 
does there exist a 
Hilbert Schmidt self-adjoint operator $T$ so that
\[ \langle Tx_k, x_k\rangle=a_k,\mbox{ for all } k?\]

\begin{remark}  This problem is rarely solvable.
	\begin{enumerate}
		     \item If $x_kx_k^*=x_lx_l^*$, but $a_k\not= a_l$ for some $k, l$, then the problem has no solution.
			\item Recall a set of vectors $\{x_i\}_{i=1}^{\infty}$ is
		{\bf $\omega$-independent} if $\sum_{i=1}^{\infty}c_ix_i=0$
		implies $c_i=0$ for all $i=1,2,\ldots$.  If $\{x_kx_k^*\}_{k=1}^\infty$ is
		not $\omega$-independent and $\sum_{k=1}^{\infty}c_kx_kx_k^*=0$ but
		not all $c_k$ are zero, then for $\langle Tx_k, x_k\rangle=a_k$ we need
		\[ \sum_{k=1}^{\infty}
		c_ka_k=\langle T,\sum_{k=1}^{\infty}c_kx_kx_k^*\rangle= 0.\]\\	
	\end{enumerate}
\end{remark}

For the solution of the state estimation problem we will need the
notion of a separated sequence in $\ell_2$.

\begin{definition}
	
	A family of vectors $\{x_i\}_{i=1}^{\infty}$ in $\ell_2$ is
	{\bf separated} if for every $j\in \NN$,
	\[ x_j \notin \overline{\spn}\{x_i\}_{i\not= j}.\]
	It is $\delta$-separated if the projection $P_j$ onto
	$\overline{\spn}\{x_i\}_{i\not= j}$ satisfies
	\[ \|(I-P_j)x_j\|\ge \delta.\]
\end{definition}

	

\begin{remark}
In general, a Bessel sequence which is $\delta$-separated may
not be a Riesz sequence.  To see this let
\[ \HH=\left ( \sum_{n=1}^{\infty}\oplus \HH_n\right )_{\ell_2},\]
where $\HH^n$ is an $n$-dimensional Hilbert space with orthonormal
basis $\{e_{in}\}_{i=1}^n$.  Let $P$ be the orthogonal projection
onto the one dimensional subspace spanned by $\sum_{i=1}^ne_{in}$.
Then $\{(I-P)e_{in}\}_{i=1,n=1}^{n-1,\ \infty}$ as a family of
vectors in $\HH$ is $\delta$-separated, 1-Bessel, but not a
Riesz sequence.  (Careful:  We have thrown away the vectors
$(I-P)e_{nn}$ above.)
\end{remark}
Note also that a $\delta$-seperated sequence may not be Bessel.
\begin{example}
	Let $x_i=e_1+e_{i+1}, i=1, 2, \ldots.$ Then $\{x_i\}_{i=1}^\infty$ is not a Besel sequence. However, it is $\delta$-seperated.
\end{example}
Indeed, let $P_j$ be the projection onto $\overline{\spn}\{x_i\}_{i\not=j}$. Then 
$$\|x_j-P_jx_j\|^2=\|e_1+e_j-P_j(e_1+e_j)\|^2=\|e_j+e_1-P_je_1\|^2=1+\|e_1-P_je_1\|^2\ge1,$$ for all $j$. So $\{x_i\}_{i=1}^\infty$ is $\delta$-separated, where $\delta=1$.

The next proposition presents a fundamental property of separated
sequences.

\begin{proposition}\label{prop7}
	If a family of vectors $\{x_i\}_{i=1}^{\infty}$ is separated, then there are vectors
	$\{y_i\}_{i=1}^{\infty}$ satisfying:
	\[ \langle y_i,x_j\rangle = \delta_{ij},\mbox{ for all i,j}.\]
	If it is $\delta$-separated then, $\underset{i}{\sup}\|y_i\|<\infty$.
\end{proposition}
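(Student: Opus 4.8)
The plan is to build the $y_i$ directly from the orthogonal projections that already appear in the definition of separation. For each $i$, let $P_i$ denote the orthogonal projection of $\ell_2$ onto $\overline{\spn}\{x_j\}_{j\neq i}$. Since $\{x_i\}_{i=1}^{\infty}$ is separated, $x_i\notin\overline{\spn}\{x_j\}_{j\neq i}$, so $(I-P_i)x_i\neq 0$, and I would set
\[ y_i=\frac{(I-P_i)x_i}{\|(I-P_i)x_i\|^2}. \]
This is the standard construction of a biorthogonal system associated to a minimal (separated) sequence.

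Next I would verify the biorthogonality relations. For $j\neq i$ the vector $x_j$ lies in $\overline{\spn}\{x_k\}_{k\neq i}$, which is the range of $P_i$, so $P_ix_j=x_j$ and hence $(I-P_i)x_j=0$. Since $I-P_i$ is self-adjoint, $\langle (I-P_i)x_i,x_j\rangle=\langle x_i,(I-P_i)x_j\rangle=0$, and therefore $\langle y_i,x_j\rangle=0$. For $j=i$, using that $I-P_i$ is an idempotent, self-adjoint operator, $\langle (I-P_i)x_i,x_i\rangle=\langle (I-P_i)^2x_i,x_i\rangle=\|(I-P_i)x_i\|^2$, so $\langle y_i,x_i\rangle=1$. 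Together these give $\langle y_i,x_j\rangle=\delta_{ij}$ for all $i,j$.

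Finally, the norm estimate is immediate from the defining formula: $\|y_i\|=\|(I-P_i)x_i\|^{-1}$. If the family is $\delta$-separated, then by definition $\|(I-P_i)x_i\|\geq\delta$ for every $i$, and hence $\sup_i\|y_i\|\leq 1/\delta<\infty$.

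I do not expect a genuine obstacle here; the construction is essentially forced by the hypothesis. The only point requiring any care is the vanishing $\langle y_i,x_j\rangle=0$ for $j\neq i$, which relies on the \emph{closed} span being used (so that $x_j$ is actually fixed by $P_i$) together with self-adjointness of orthogonal projections; everything else is a one-line computation.
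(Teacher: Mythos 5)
Your proposal is correct and is essentially the paper's own argument: the paper defines $y_j=(I-P_j)x_j/\|(I-P_j)x_j\|^2$ with $P_j$ the orthogonal projection onto $\overline{\spn}\{x_i\}_{i\neq j}$, and obtains the bound $\sup_i\|y_i\|\le 1/\delta$ exactly as you do. Your write-up just spells out the biorthogonality verification in slightly more detail than the paper does.
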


\begin{proof}
	Fix $j$ and let $P_j$ be the orthogonal projection onto
	$\overline{\spn}\{x_i\}_{i\not= j}$.  Note that $P_jx_j\not= x_j$ and so
	$(I-P_j)x_j \not= 0$.  
	
	Clearly,
	\[ \langle (I-P_j)x_j, x_i\rangle 
 =0 \mbox{ for } i\not=j.\]
	So let
	\[ y_j=\frac{(I-P_j)x_j}{\|(I-P_j)x_j\|^2},\] we get the desired sequence.
	
	For the $\delta$-separated case, we have that $\|(I-P_j)x_j\|\ge 
	\delta$ and the result follows. 
\end{proof}

 For the next result, we will need:

	\begin{proposition} Let $\{x_i\}_{i=1}^\infty$ be a bounded sequence in a Hilbert space $\HH$. The following are equivalent:
		\begin{enumerate}
			\item  $\{x_i\}_{i=1}^\infty$ is $\delta$-separated. 
			\item $\{x_i\}_{i=1}^\infty$ is separated and  $\{x_i\}_{i=n}^\infty$ is $\delta_1$-separated, for some $n\geq 1$.
		\end{enumerate}
	\end{proposition}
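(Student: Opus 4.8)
The plan is to dispatch $(1)\Rightarrow(2)$ immediately and concentrate on $(2)\Rightarrow(1)$. If $\{x_i\}_{i=1}^{\infty}$ is $\delta$-separated it is trivially separated, and for \emph{any} $n$ the tail $\{x_i\}_{i=n}^{\infty}$ is again $\delta$-separated: the orthogonal projection onto $\overline{\spn}\{x_i: i\ge n,\ i\ne j\}$ has range inside the range of the projection onto $\overline{\spn}\{x_i: i\ne j\}$, so the distance from $x_j$ to the smaller space is at least the distance to the larger one, i.e.\ at least $\delta$. Hence $(2)$ holds with $n=1$ and $\delta_1=\delta$.

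For $(2)\Rightarrow(1)$ I would set $W=\overline{\spn}\{x_i\}_{i\ge n}$, let $P$ be the orthogonal projection onto $W$, and for $k<n$ write $p_k=Px_k\in W$ and $q_k=x_k-p_k\in W^{\perp}$. The crucial first step is that $q_1,\dots,q_{n-1}$ are linearly independent. Indeed, if $\sum_{k<n}c_kq_k=0$ with $m$ the largest index having $c_m\ne0$, then
\[ x_m=p_m-\frac1{c_m}\sum_{k<m}c_kx_k+\frac1{c_m}\sum_{k<m}c_kp_k \in \spn\{x_k:k<m\}+W\subseteq\overline{\spn}\{x_i\}_{i\ne m}, \]
using $m<n$ (so the family $\{x_i\}_{i\ge n}$ is entirely among the $x_i$ with $i\ne m$); this contradicts separability of $\{x_i\}$. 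Therefore there is $\alpha>0$ with $\|\sum_{k<n}c_kq_k\|\ge\alpha\bigl(\sum_{k<n}|c_k|^2\bigr)^{1/2}$ for all scalars $c_k$, and I also fix $\beta=\bigl(\sum_{k<n}\|p_k\|^2\bigr)^{1/2}<\infty$.

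Now fix $j\ge n$ and put $W_j=\overline{\spn}\{x_i:i\ge n,\ i\ne j\}$, so $\delta_1$-separation of the tail gives $\mathrm{dist}(x_j,W_j)\ge\delta_1$, while $\overline{\spn}\{x_i\}_{i\ne j}=\overline{\spn\{x_k:k<n\}+W_j}$. For any scalars $c_k$ and any $w\in W_j$, since $x_j\in W$ an orthogonal split gives
\[ \Bigl\|x_j-\sum_{k<n}c_kx_k-w\Bigr\|^2=\Bigl\|x_j-\sum_{k<n}c_kp_k-w\Bigr\|^2+\Bigl\|\sum_{k<n}c_kq_k\Bigr\|^2, \]
hence simultaneously $\bigl\|x_j-\sum c_kx_k-w\bigr\|\ge\alpha\|c\|_2$ and $\bigl\|x_j-\sum c_kx_k-w\bigr\|\ge\|x_j-w\|-\|\sum c_kp_k\|\ge\delta_1-\beta\|c\|_2$. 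Taking the larger of these two bounds and minimizing the piecewise-linear function $t\mapsto\max(\alpha t,\ \delta_1-\beta t)$ over $t=\|c\|_2\ge0$ yields $\mathrm{dist}(x_j,\overline{\spn}\{x_i\}_{i\ne j})\ge\alpha\delta_1/(\alpha+\beta)=:\delta'>0$, independently of $j\ge n$. The finitely many remaining indices $j<n$ each give $\|(I-P_j)x_j\|>0$ by separability; letting $\delta''$ be their minimum, $\{x_i\}$ is $\delta$-separated with $\delta=\min(\delta',\delta'')$.

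The main obstacle is precisely the linear-independence step and the constant $\alpha$ it produces, since that is the only place where \emph{separability of the entire sequence} (rather than merely of the tail) is exploited, and it is what prevents the finite head $x_1,\dots,x_{n-1}$ from pulling $\overline{\spn}\{x_i\}_{i\ne j}$ arbitrarily close to $x_j$. Everything after that — the orthogonal Pythagoras split and the one-variable optimization — is routine.
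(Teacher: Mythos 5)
Your proof is correct, and it takes a genuinely different route from the paper's. The paper argues $(2)\Rightarrow(1)$ by contradiction: it extracts indices $n_j$ with $\|x_{n_j}-P_{n_j}x_{n_j}\|<1/j$, splits the approximants as $y_j+z_j$ with $y_j$ in the head span and $z_j$ in the tail span, and then runs a compactness argument in the finite-dimensional head space (first bounding $\|y_j\|$ away from $0$ using $\delta_1$-separation of the tail, then bounding it above using a positive gap $\gamma=\inf\{\|u-v\|\}$ between the unit sphere of $\spn\{x_i\}_{i<n}$ and $\overline{\spn}\{x_i\}_{i\ge n}$, and finally passing to a convergent subsequence to land a nonzero vector in the intersection of the two spans, contradicting separation). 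You instead argue directly: the orthogonal components $q_k=(I-P)x_k$ of the head vectors relative to $W=\overline{\spn}\{x_i\}_{i\ge n}$ are linearly independent (this is exactly where separation of the full sequence enters, just as the paper's $\gamma>0$ is where it enters there), which yields the constant $\alpha$; the Pythagorean split then gives the two competing lower bounds $\alpha\|c\|_2$ and $\delta_1-\beta\|c\|_2$, and the one-variable optimization produces the explicit uniform constant $\delta'=\alpha\delta_1/(\alpha+\beta)$. Both proofs ultimately rest on the same geometric fact --- the head span and the closed tail span meet at a positive angle --- but your packaging avoids subsequence extraction, produces an explicit $\delta$, and, notably, never invokes the boundedness hypothesis $\sup_i\|x_i\|<\infty$ (the paper uses it to get the upper bound $\|y_j\|\le K$ in its Claim 2), so your argument in fact proves a slightly stronger statement.
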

	
	\begin{proof}
		We just need to show that $(2)\Rightarrow (1)$.
		So assume $\{x_i\}_{i=1}^{\infty}$ is separated and 
		$\{x_i\}_{i=n}^{\infty}$ is $\delta_1$-separated. Let $P_j$
		be the projection onto $\overline{\spn}\{x_i\}_{i\not= j}$, for $j=1, 2, \ldots,$ and 
		let $Q_j$ be the projection onto $\overline{\spn}\{x_i\}_{n\le i \not= j}$, for $j=n, n+1, \ldots.$  So
		\[ \|(I-Q_j)x_j\|\ge \delta_1,\mbox{ for all }j\ge n.\]
		We need to show that there exists a $\delta >0$ so that
		\[ \|(I-P_j)x_j\|\ge \delta,\mbox{ for all }j\ge 1.\] We will do this in steps.
		
		\vskip10pt
		\noindent {\bf Step 1}: There exists a $\delta_2 >0$ so that
		\[ \|(I-P_j)x_j\|\ge \delta_2,\mbox{ for all }j\ge n.\] 
		\vskip10pt
		
		We will do this by way of contradiction.  So assume there are
		natural numbers $n\leq n_1<n_2<\cdots$ satisfying:
		\[ \|x_{n_j}-P_{n_j}(x_{n_j})\|<\dfrac{1}{j}.\]
		It follows that there are vectors $y_j \in \spn\{x_i\}_{i=1}^{n-1}$
		and $z_j\in \spn \{x_i\}_{n\le i \not= n_j < \infty}$ so that
		$\|x_{n_j}-(y_j+z_j)\|< \frac{1}{j}$.
		\vskip10pt
		\noindent {\bf Claim 1}:  There are an $\epsilon >0$ and $n_0\in \NN$ so that
		$\|y_{j}\|\ge \epsilon$, for all $j\ge n_0$.
		\vskip10pt
		We prove the claim by way of contradiction.  If the claim fails,
		there are integers $j_1<j_2<\cdots$ so that $\|y_{j_k}\|< \frac{1}{k}$ for all $k=1,2,\ldots$.  It follows that
		\[ \|x_{n_{j_k}}- z_{j_k}\|\le \|x_{n_{j_k}}-(z_{j_k}+y_{j_k})\|
		+ \|y_{j_k}\|< \frac{2}{k}, \mbox{ for all }k,\]
		which contradicts the fact that $\{x_i\}_{i=n}^{\infty}$ is
		$\delta$-separated.
		
		\vskip10pt
		\noindent {\bf Claim 2}: There is a constant $K>0$ so that $\|y_j\|\leq K$, for all $j\ge n_0$.
		
		Define 
		$$\gamma=\inf\{ \|u-v\| : u\in \spn\{x_i\}_{i=1}^{n-1}, v\in\overline{\spn}\{x_i\}_{i=n}^\infty, \|u\|=1\}.$$
		We will show that $\gamma>0$. Indeed, if $\gamma=0$ then there are sequences $\{u_j\}_{j=1}^\infty \subset\spn\{x_i\}_{i=1}^{n-1}, \|u_j\|=1$, for all $j$, and $\{v_j\}_{j=1}^\infty\subset \overline{\spn}\{x_i\}_{i=n}^\infty$ so that
		\[\|u_j-v_j\|\to 0 \mbox{ as } j\to \infty.\]
		By switching to a subsequence if necessary, we may assume
		$u_j \rightarrow u \in \spn\{x_i\}_{i=1}^{n-1}$ and $u\not= 0$.
		Since 
		$$\|v_j-u\|\leq \|v_j-u_j\|+\|u_j-u\|,$$ we conclude that $v_j\to u\in \overline{\spn}\{x_i\}_{i=n}^\infty$. Thus, $$u\in \spn\{x_i\}_{i=1}^{n-1}\cap \ \overline{\spn}\{x_i\}_{i=n}^\infty.$$ Since 
		$u\in \spn\{x_i\}_{i=1}^{n-1}, u\not=0$, we can write $u=\sum_{i=1}^{n-1}\alpha_ix_i$ for some scalars $\alpha_i's$ not all zero. Without loss of generality, we can assume $\alpha_1\not=0$. Then
		$$x_1=\dfrac{1}{\alpha_1}\left(u-\sum_{i=2}^{n-1}\alpha_ix_i\right)\in \overline{\spn}\{x_i\}_{i=2}^\infty,$$
		which contradicts the fact that $\{x_i\}_{i=1}^\infty$ is separated. So, $\gamma>0.$
		
		Now we have 
		\[ \| \dfrac{y_j+z_j}{\|y_j\|}\|\ge \gamma, \mbox{ for all }j\geq n_0 ,\] and $\underset{j\ge 1}{\sup}\|x_j\|$ is finte. Therefore, there is some $K>0$ such that
		\[ \|y_j\|\leq \dfrac{1}{\gamma}\|y_j+z_j\|\leq \dfrac{1}{\gamma}(\|y_j+z_j-x_{n_j}\|+ \|x_{n_j}\|)\leq K, \mbox{ for all } j\ge n_0.\]
		The Claim 2 is proven. 
		
		Now since $\epsilon \leq \|y_j\|\leq K$ for all $j\ge n_0$, it has a convergent subsequence $y_{j_k}\to y\in \spn\{x_i\}_{i=1}^{n-1}$, and $y\not=0$.
		
		From the fact that
		\[\|x_{n_{j_k}}-z_{j_k}-y\|\leq \|x_{n_{j_k}}-z_{j_k}-y_{j_k}\|+\|y_{j_k}-y\|\leq \dfrac{1}{j_k}+\|y_{j_k}-y\|,\]
		we conclude $x_{n_{j_k}}-z_{j_k}\to y\in\overline{\spn}\{x_i\}_{i=n}^\infty$ as $k\to \infty$. Thus, 
		\[y\in \spn\{x_i\}_{i=1}^{n-1}\cap \overline{\spn}\{x_i\}_{i=n}^\infty\]
		By the same argument as in the proof of Claim 2, this leads to a contradiction with the fact that $\{x_i\}_{i=1}^\infty$ is separated.
		
		\vskip10pt
		\noindent {\bf Step 2}: There exists a $\delta >0$ so that
		\[ \|(I-P_j)x_j\|\ge \delta,\mbox{ for all }j\ge 1.\] 
		
		Since $\{x_i\}_{i=1}^\infty$ is separated, for each $i=1,2,\ldots,n-1$, there exists $\epsilon_i>0$ so that $\|(I-P_i)x_{i}\|\ge \epsilon_i$.
		Combined with Step 1, we have that $\{x_i\}_{i=1}^{\infty}$
		is $\delta$-separated, where $\delta = \underset{i=1, \ldots, n-1}{\min}\{\epsilon_i, \delta_2\}.$
		The proof of the Proposition is completed. 
\end{proof}

Now we give a complete classification of when the state estimation problem is
solvable for all measurement vectors in $\ell_1$.  Note that we
have done it in complete generality and not assumed that
$\{x_k\}_{k=1}^{\infty}$ is injective.

\begin{theorem}\label{thm105}
	Let $\mathcal{X}=\{x_k\}_{k=1}^{\infty}$ be a frame for the real or complex space $\ell_2$.  The following
	are equivalent:
	\begin{enumerate}
		\item For every real vector $a=(a_1,a_2,\ldots)\in \ell_1$, there is a 
		Hilbert Schmidt self-adjoint operator $T$ so that
		\[ \langle Tx_k, x_k\rangle = a_k,\mbox{ for all }k=1,2,\ldots.\]
		\item The sequence $\{\tilde{x}_k\}_{k=1}^{\infty}$ is $\delta$-separated.
	\end{enumerate}
\end{theorem}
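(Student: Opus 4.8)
The plan is to translate everything into the space $\tilde{\HH}$ and recognize the problem as a moment problem. By the remark following Theorem~\ref{complex infinite}, in both the real and complex cases a Hilbert Schmidt self-adjoint operator $T$ corresponds to a vector $\tilde{T}\in\tilde{\HH}$ with $\langle Tx,x\rangle=\langle\tilde{T},\tilde{x}\rangle$ for all $x\in\ell_2$ (where $\tilde{x}$ is as in Definition~\ref{defn real infi} or Definition~\ref{defn complex infi}), and conversely every $\xi\in\tilde{\HH}$ arises this way. Applying this with $x=x_k$, condition (1) is equivalent to: for every real $a=(a_k)\in\ell_1$ there is $\xi\in\tilde{\HH}$ with $\langle\xi,\tilde{x}_k\rangle=a_k$ for all $k$. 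By Theorem~\ref{thm103} the family $\{\tilde{x}_k\}_{k=1}^\infty$ is Bessel in $\tilde{\HH}$; set $V=\overline{\spn}\{\tilde{x}_k\}_{k=1}^\infty$. Replacing $\xi$ by its orthogonal projection onto $V$ changes no $\langle\xi,\tilde{x}_k\rangle$, so (1) is equivalent to the same statement with $\xi$ ranging over $V$, in which $\{\tilde{x}_k\}$ is complete; also $\delta$-separation is intrinsic to $V$, so I may work inside $V$.

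For $(2)\Rightarrow(1)$: if $\{\tilde{x}_k\}$ is $\delta$-separated, Proposition~\ref{prop7} supplies vectors $\{y_k\}\subset\tilde{\HH}$ with $\langle y_k,\tilde{x}_j\rangle=\delta_{jk}$ and $M:=\sup_k\|y_k\|<\infty$. Given $a\in\ell_1$, the series $\xi:=\sum_k a_ky_k$ converges in $\tilde{\HH}$ since $\sum_k|a_k|\|y_k\|\le M\|a\|_1$, and continuity of $\langle\,\cdot\,,\tilde{x}_j\rangle$ gives $\langle\xi,\tilde{x}_j\rangle=a_j$ for all $j$. Taking the Hilbert Schmidt self-adjoint operator $T$ corresponding to $\xi$ yields $\langle Tx_k,x_k\rangle=\langle\xi,\tilde{x}_k\rangle=a_k$ for all $k$.

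For $(1)\Rightarrow(2)$: the moment map $M_0\colon V\to\ell_2$, $M_0\xi=(\langle\xi,\tilde{x}_k\rangle)_k$, is bounded (Bessel) and injective (completeness of $\{\tilde{x}_k\}$ in $V$). By (1), each $a\in\ell_1$ has a unique preimage $Na\in V$ under $M_0$, giving a linear map $N\colon\ell_1\to V$. I would check $N$ has closed graph: if $a^{(n)}\to a$ in $\ell_1$ and $Na^{(n)}\to\xi$ in $V$, then $M_0(Na^{(n)})=a^{(n)}\to a$ in $\ell_2$, while also $M_0(Na^{(n)})\to M_0\xi$ by boundedness of $M_0$, so $M_0\xi=a$, i.e.\ $Na=\xi$. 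By the closed graph theorem $N$ is bounded. Taking $a=e_j$ (the unit vector, $\|e_j\|_1=1$) gives $\xi_j:=Ne_j\in V$ with $\langle\xi_j,\tilde{x}_k\rangle=\delta_{jk}$ and $\|\xi_j\|\le\|N\|$. Let $P_j$ be the orthogonal projection of $V$ onto $\overline{\spn}\{\tilde{x}_i\}_{i\ne j}$; since $\xi_j\perp\tilde{x}_i$ for $i\ne j$ we have $P_j\xi_j=0$, hence
\[ 1=\langle\xi_j,\tilde{x}_j\rangle=\langle\xi_j,(I-P_j)\tilde{x}_j\rangle\le\|\xi_j\|\,\|(I-P_j)\tilde{x}_j\|\le\|N\|\,\|(I-P_j)\tilde{x}_j\|.\]
Thus $\|(I-P_j)\tilde{x}_j\|\ge\|N\|^{-1}$ for every $j$, so $\{\tilde{x}_k\}$ is $\delta$-separated with $\delta=\|N\|^{-1}$.

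The routine points (boundedness of $M_0$, convergence of $\xi$, the Cauchy--Schwarz estimate) are immediate; I expect the genuinely structural step to be the $(1)\Rightarrow(2)$ direction, where one must first pass to $V$ so that the moment map becomes injective and then invoke the closed graph theorem to upgrade the mere solvability of the moment problem on all of $\ell_1$ to a uniform bound — which is precisely the content of $\delta$-separation.
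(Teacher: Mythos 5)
Your proof is correct, and while your $(2)\Rightarrow(1)$ direction coincides with the paper's (dual vectors from Proposition \ref{prop7}, absolute convergence of $\sum_k a_k y_k$ from $a\in\ell_1$ and the uniform norm bound), your $(1)\Rightarrow(2)$ direction is genuinely different. The paper argues by hand: it first extracts the dual functionals to get separation, then assumes $\delta$-separation fails, invokes the preceding proposition to conclude that \emph{no} tail $\{\tilde{x}_k\}_{k\ge n}$ is uniformly separated, and runs a gliding-hump construction producing disjoint blocks, indices $k_m$, approximants $y_m$ with $\|\tilde{x}_{k_m}-y_m\|<1/m^3$, and an explicit $a\in\ell_1$ (namely $a_{k_m}=1/m^2$) whose solvability would force $\|\tilde{T}\|\ge m$ for all $m$. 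You instead pass to $V=\overline{\spn}\{\tilde{x}_k\}$, where the moment map $M_0$ is injective and bounded (Bessel), observe that hypothesis (1) makes the solution operator $N\colon\ell_1\to V$ everywhere defined with closed graph, and let the closed graph theorem deliver the uniform bound $\|Ne_j\|\le\|N\|$, whence $\delta$-separation with $\delta=\|N\|^{-1}$. Your route is shorter, bypasses the tail-separation proposition entirely, and makes the quantitative content transparent ($\delta$-separation is exactly boundedness of the solution operator); the paper's route is self-contained and constructive in that it exhibits a concrete unsolvable-with-bounded-norm measurement vector rather than appealing to Baire-category machinery. Both are sound; the one point worth making explicit in your write-up is the elementary inequality $\|a\|_2\le\|a\|_1$ used when you pass from $a^{(n)}\to a$ in $\ell_1$ to convergence in $\ell_2$ in the closed-graph verification.
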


\begin{proof}
$(1)\Rightarrow (2)$: By (1), for each $k=1, 2,\ldots$, there is a Hibert Schmidt self-adjoint operator $R_k$, and hence a vector $\tilde{R}_k\in\tilde{\HH}$ so that 
\[ \langle \tilde{R}_k, \tilde{x}_l\rangle=\langle R_kx_l, x_l\rangle = \begin{cases}
1&\mbox{ if } k=l\\
0&\mbox{ if }k\not= l.
\end{cases}\] 
It follows that $\tilde{x}_l\notin\overline{\spn}\{\tilde{x}_k\}_{k\not=l}$ and hence $\{\tilde{x}_k\}_{k=1}^{\infty}$ is separated. We now proceed by way
of contradiction.  Suppose that  $\{\tilde{x}_k\}_{k=1}^{\infty}$ is not
$\delta$-separated. Then $\{\tilde{x}_k\}_{k=n}^{\infty}$ is not
$\delta_n$-separated for all $n$. Then for $n=1$, there is $k_1\geq 1$ such that 
$$\| \tilde{x}_{k_1} -P_{k_1}(\tilde{x}_{k_1})\|<\dfrac{1}{2}.$$ 
Since $P_{k_1}(\tilde{x}_{k_1})\in \overline{\spn}\{\tilde{x}_{k}\}_{k=1, k\not=k_1}^{\infty}$, there are some scalars  $\alpha_k, k \in I$, where $I$ is a finite subset of $\{k: k\geq 1, k\not=k_1\}$ such that
$$\|P_{k_1}(\tilde{x}_{k_1})-\sum_{k\in I}\alpha_k\tilde{x}_{k}\|<\dfrac{1}{2}.$$ 
Let $y_{1}=\sum_{k\in I}\alpha_k\tilde{x}_{k}$. Then 
$$\|\tilde{x}_{k_1}-y_{1}\|<1.$$ 
Now let $n_2>\max\{k_1, k\}_{k\in I}$. Since $\{\tilde{x}_{k}\}_{k=n_2}^\infty$ is not $\delta_{n_2}$-separated, similar 
to the above, there are numbers $n_2\leq k_2<n_3$ and a vector $$y_{2}\in \spn\{\tilde{x}_{k}: n_2\leq k\not=k_2<n_3\}$$ such that 
$$\|\tilde{x}_{k_2}-y_{2}\|<\dfrac{1}{2^3}.$$
Continuing this procedure we can choose
$1=n_1 \leq k_1 < n_2 \leq k_2 < n_3 < \cdots$ and vectors
\[ y_{m} \in \spn\{\tilde{x}_{k}:n_m\le k\not= k_m < n_{m+1}\},\]
such that $$\|\tilde{x}_{k_m}-y_{m}\|< \frac{1}{m^3},$$ for all $m$.
Now let $a=\{a_k\}_{k=1}^\infty\in \ell_1$, where 
\[a_k =
\begin{cases}
\frac{1}{m^2}&\mbox{ if }k=k_m\\
0 &\mbox{ otherwise }.
\end{cases}\]
Then by assumption, there exists a Hilbert Schmidt self-adjoint operator $T$ and a vector $\tilde{T}\in \tilde{\HH}$ so that 
$\langle \tilde{T}, \tilde{x}_{k}\rangle=\langle Tx_{k}, x_{k}\rangle = a_k$ for all $k$. But then
\[\dfrac{1}{m^2}=\langle \tilde{T}, \tilde{x}_{k_m}\rangle=\langle \tilde{T},\tilde{x}_{k_m}-y_m\rangle\leq \|\tilde{T}\|\|\tilde{x}_{k_m}-y_{m}\|\leq \|\tilde{T}\|\dfrac{1}{m^3},\]
which implies $\|\tilde{T}\|\geq m$ for all $m$, a contradiction.

\vskip10pt
$(2)\Rightarrow (1)$:  Since $\{\tilde{x}_k\}_{k=1}^\infty$ is $\delta$-separated, by Proposition \ref{prop7}, 
there
are vectors $\{\tilde{T}_k\}_{k=1}^{\infty}$ in $\tilde{\HH}$ satisfying
\[ \langle \tilde{T}_k, \tilde{x}_l\rangle=
\begin{cases}
1 &\mbox{ if } k=l\\
0 &\mbox{ if }k\not= l
\end{cases}\]
for all $k, l\geq 1$, and $\underset{k\ge 1}{\sup}\|\tilde{T}_k\|< \infty$.
Now, fix $a=(a_1,a_2,\ldots)\in \ell_1$ and let
\[ \tilde{T} = \sum_{k=1}^{\infty}a_k\tilde{T}_k.\]
This series converges since $a\in \ell_1$ and $\underset{k\ge 1}{\sup}\|\tilde{T}_k\|<\infty$.
Now, let $T$ be the Hilbert Schmidt self-adjoint operator that corresponds with $\tilde{T}$. Then we have 
$$ \langle Tx_k, x_k\rangle =\langle \tilde{T}, \tilde{x}_k\rangle = a_k, \mbox{ for all } k=1,2,\ldots.$$ This completes the proof.
\end{proof}

Now we show that there is no injective frame for which the state
estimation problem is solvable for all measurements taken from $\ell_2$. Note that for a Hilbert Schmidt self-adjoint  operator $T$ on the Hilbert space $\ell_2$, the corresponding vector $\tilde{T}$ is defined as in the proof of Theorem \ref{Real inf} for the real case and Theorem \ref{complex infinite} for the complex case.

\begin{theorem}\label{thm106}
	There is no injective frame $\mathcal{X}=\{x_k\}_{k=1}^{\infty}$ 
	in the real or complex space $\ell_2$ so that for every $a=\{a_k\}_{k=1}^{\infty}\in \ell_2$,
	there is a Hilbert Schmidt operator $T$ so that 
	\[\langle Tx_k, x_k\rangle = a_k,\mbox{ for all }k=1,2,\ldots.\]
\end{theorem}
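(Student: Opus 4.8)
The plan is to derive a contradiction from the combination of injectivity and solvability of the state estimation problem for all $\ell_2$ measurements. The strategy is to exploit the structure established in Theorem \ref{thm105}: if solvability held for all of $\ell_2$, then in particular it would hold for all of $\ell_1 \subset \ell_2$, so by Theorem \ref{thm105} the sequence $\{\tilde{x}_k\}_{k=1}^\infty$ would be $\delta$-separated. This gives us, via Proposition \ref{prop7}, a uniformly bounded biorthogonal system $\{\tilde{T}_k\}_{k=1}^\infty$ in $\tilde{\HH}$ with $\sup_k \|\tilde{T}_k\| =: M < \infty$. Moreover, injectivity forces $\overline{\spn}\{\tilde{x}_k\}_{k=1}^\infty = \tilde{\HH}$ by Theorem \ref{Real inf} (real case) or Theorem \ref{complex infinite} (complex case). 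I would then show these two facts are incompatible with $\ell_2$-solvability.

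\emph{First} I would set up the linear map that $\ell_2$-solvability provides: for each $a = \{a_k\} \in \ell_2$ there is a Hilbert--Schmidt self-adjoint $T$, equivalently a vector $\tilde{T} \in \tilde{\HH}$, with $\langle \tilde{T}, \tilde{x}_k\rangle = a_k$ for all $k$. Since $\{\tilde{x}_k\}$ spans $\tilde{\HH}$, this $\tilde{T}$ is unique, so we get a well-defined linear map $R: \ell_2 \to \tilde{\HH}$, $a \mapsto \tilde{T}$. The key observation is that $R$ is a closed operator: if $a^{(n)} \to a$ in $\ell_2$ and $R a^{(n)} \to \tilde{T}$ in $\tilde{\HH}$, then for each fixed $k$, $\langle Ra^{(n)}, \tilde{x}_k\rangle = a^{(n)}_k \to a_k$ and also $\to \langle \tilde{T}, \tilde{x}_k\rangle$, whence $\langle \tilde{T}, \tilde{x}_k\rangle = a_k$, i.e. $\tilde{T} = Ra$. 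By the closed graph theorem $R$ is bounded, say $\|R\| = C$.

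\emph{Now} I would produce the contradiction by testing $R$ against the biorthogonal vectors $\tilde{T}_k$. Feeding in the standard basis vector $e_m \in \ell_2$ (the measurement $a = e_m$), we get $R e_m = \tilde{T}$ with $\langle \tilde{T}, \tilde{x}_k\rangle = \delta_{mk}$; by uniqueness $\tilde{T} = \tilde{T}_m$, so $\|\tilde{T}_m\| = \|R e_m\| \le C$ — consistent so far, but now test with finite sums. For a finite sequence $a = \sum_{m \in F} a_m e_m$ we have $R a = \sum_{m\in F} a_m \tilde{T}_m$, and pairing with $\tilde{x}_k$ recovers $a_k$; the point is to choose $a \in \ell_2$ with $\|a\|_{\ell_2}$ controlled but for which the solution operator is forced to be large. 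The cleanest route: since $\{\tilde{x}_k\}$ spans a Hilbert space but is \emph{not} a frame for $\tilde{\HH}$ (by Theorem \ref{thm103}, since $\{x_k\}$ is a frame hence Bessel hence $\{\tilde{x}_k\}$ is Bessel), it has no positive lower bound, so there are unit vectors in $\tilde{\HH}$ that are nearly orthogonal to all $\tilde{x}_k$ on large index sets; concretely, one mimics the argument in Theorem \ref{thm105}: extract $1 = n_1 \le k_1 < n_2 \le k_2 < \cdots$ and vectors $y_m \in \spn\{\tilde{x}_k : n_m \le k \ne k_m < n_{m+1}\}$ with $\|\tilde{x}_{k_m} - y_m\| < m^{-3}$ (this uses that $\{\tilde{x}_k\}$ fails the lower frame bound, equivalently is not bounded below away from its closed span), then take $a_k = 1/m$ for $k = k_m$ and $0$ otherwise, so $a \in \ell_2$ but $a \notin \ell_1$; the corresponding $\tilde{T}$ satisfies $1/m = \langle \tilde{T}, \tilde{x}_{k_m}\rangle = \langle \tilde{T}, \tilde{x}_{k_m} - y_m\rangle \le \|\tilde{T}\| \cdot m^{-3}$, forcing $\|\tilde{T}\| \ge m^2 \to \infty$ while $\|a\|_{\ell_2}^2 = \sum m^{-2} < \infty$ stays bounded, contradicting $\|\tilde{T}\| \le C\|a\|_{\ell_2}$.

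\textbf{The main obstacle} I anticipate is making rigorous the extraction of the $y_m$ with $\|\tilde{x}_{k_m} - y_m\|$ summably small in a way that simultaneously (i) only uses that $\{\tilde{x}_k\}$ fails to have a lower frame bound — which is exactly Theorem \ref{thm103} — and (ii) arranges the blocks $[n_m, n_{m+1})$ to be disjoint so that the measurement vector $a$ is genuinely in $\ell_2$ and the pairing $\langle \tilde{T}, \tilde{x}_{k_m} - y_m\rangle$ isolates a single coordinate. One must be careful that failure of the lower frame bound gives, for any $\epsilon > 0$, a \emph{single} unit vector (not a whole subspace) whose frame coefficients are small; to get $\tilde{x}_{k_m}$ itself close to $\spn$ of \emph{other} $\tilde{x}_k$'s one instead invokes that if $\{\tilde{x}_k\}$ were $\delta$-separated then $\ell_1$-solvability would give the bounded biorthogonal system, and then a separate argument (essentially re-running the contradiction inside Theorem \ref{thm105}'s proof but with the sharper $\ell_2$ hypothesis, or alternatively: a $\delta$-separated Bessel sequence spanning the whole space whose biorthogonal vectors are uniformly bounded is a Riesz basis, hence a frame, contradicting Theorem \ref{thm103}) closes the gap. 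I would structure the final write-up around this last clean observation: $\delta$-separated $+$ spanning $+$ uniformly bounded biorthogonal $\Rightarrow$ Riesz basis $\Rightarrow$ frame for $\tilde{\HH}$, contradicting Theorem \ref{thm103}; this avoids re-deriving the $y_m$ construction from scratch.
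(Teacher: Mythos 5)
Your overall architecture is sound and ends at the same contradiction as the paper (a lower Riesz bound for $\{\tilde{x}_k\}_{k=1}^{\infty}$ forces it to be a Riesz basis, hence a frame for $\tilde{\HH}$, contradicting Theorem \ref{thm103} and Corollary \ref{Coro10}), but the step you lean on to close the argument is not justified as stated. The implication ``$\delta$-separated $+$ complete $+$ \emph{uniformly bounded} biorthogonal system $\Rightarrow$ Riesz basis'' is too weak: to extract the lower Riesz bound one pairs $\sum_k a_k\tilde{x}_k$ against $\sum_l a_l\tilde{T}_l$ and must dominate $\bigl\|\sum_l a_l\tilde{T}_l\bigr\|$ by a constant times $\|a\|_{\ell_2}$; the uniform bound $\sup_l\|\tilde{T}_l\|\le M$ only yields the useless $\ell_1$ estimate $M\sum_l|a_l|$. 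What is actually needed is that the biorthogonal system $\{\tilde{T}_k\}$ is \emph{Bessel}. Likewise, your alternative route via vectors $y_m$ with $\|\tilde{x}_{k_m}-y_m\|<m^{-3}$ is unavailable: such $y_m$ exist precisely when $\{\tilde{x}_k\}$ fails to be $\delta$-separated, which you have already excluded via Theorem \ref{thm105}; the failure of the lower \emph{frame} bound, which is all Theorem \ref{thm103} provides, does not put any $\tilde{x}_{k_m}$ close to the span of the others.

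The good news is that your closed graph argument already supplies the missing Bessel property, so the proof closes with material you have in hand: since $R$ is bounded and $Re_m=\tilde{T}_m$, every finite scalar sequence satisfies $\|\sum_m a_m\tilde{T}_m\|=\|Ra\|\le\|R\|\,\|a\|$, i.e.\ $\{\tilde{T}_m\}$ is $\|R\|^2$-Bessel; then
\[ \Bigl\|\sum_k a_k\tilde{x}_k\Bigr\|\ \ge\ \frac{\bigl|\langle\sum_k a_k\tilde{x}_k,\ \sum_l a_l\tilde{T}_l\rangle\bigr|}{\|\sum_l a_l\tilde{T}_l\|}\ \ge\ \frac{\|a\|^2}{\|R\|\,\|a\|}\ =\ \frac{\|a\|}{\|R\|},\]
and you are done. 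Rewritten this way, your proof is a genuine and cleaner alternative to the paper's: the paper obtains the same Bessel bound for the biorthogonal system by a sign-flipping gliding-hump construction (its Steps II--III) followed by the Uniform Boundedness Principle (Step IV), whereas the closed graph theorem applied to the solution operator $R$ gets it in one stroke. Note also that the detour through Theorem \ref{thm105} and $\delta$-separation then becomes unnecessary, since well-definedness of $R$ requires only injectivity, i.e.\ completeness of $\{\tilde{x}_k\}$ in $\tilde{\HH}$.
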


\begin{proof}
	We will proceed by way of contradiction. The proof is divided into steps.

	Suppose that there is an injective frame $\mathcal{X}=\{x_k\}_{k=1}^{\infty}$ for which the state estimation problem is solvable for all choices $\{a_k\}_{k=1}^\infty\in \ell_2$.
	\vskip10pt
	\noindent {\bf Step I}: There are vectors $\tilde{R}_k\in \tilde{\HH}, k=1, 2, \ldots$ so that $\langle \tilde{R}_k,\tilde{x}_l\rangle=\delta_{kl}$. 
	\vskip10pt
	This is immediate because by assumption, for each $k=1, 2,\ldots$, there is a Hilbert Schmidt self-adjoint operator $R_k$ so that 
	\[ \langle \tilde{R}_k,\tilde{x}_l\rangle =\langle R_kx_l,x_l\rangle =\begin{cases}
	1&\mbox{ if } k=l\\
	0&\mbox{ if } k\not= l.
	\end{cases}\] 
	
	\vskip10pt
	Denote $E_n=\spn\{\tilde{x}_k\}_{k=1}^n$ and
	let $P_n$ be the projection onto $E_n$.
	
	\vskip10pt
	\noindent {\bf Step II}: If there is a real vector $\{a_k\}_{k=1}^\infty\in \ell_2$ satisfying $\underset{n}{\sup}\|\sum_{k=1}^na_k\tilde{R_k}\|=\infty$, then there is a real vector $\{b_k\}_{k=1}^{\infty}\in\ell_2$ and $n_1<n_2< \cdots$ so that
	\[ \|P_{n_j}(\sum_{k=1}^{n_j}b_k\tilde{R_k})\|\ge j.\]
	\vskip10pt
	Indeed, since $\underset{n}{\sup}\|\sum_{k=1}^na_k\tilde{R_k}\|=\infty$, we can choose a sequence $m_1<m_2 <\cdots$ so that
	\[ \|\sum_{k=1}^{m_j}a_k\tilde{R}_k\|\ge 2j.\]
	
	For any $j>1$, we have 
	\begin{align*}
	\|\sum_{k=1}^{m_1}a_k\tilde{R}_k-\sum_{k=m_1+1}^{m_j}a_k\tilde{R}_k\|&\geq
	\|\sum_{k=1}^{m_j}a_k\tilde{R}_k\|-2\|\sum_{k=1}^{m_1}a_k\tilde{R}_k\|\\
	&\geq 2j-2\|\sum_{k=1}^{m_1}a_k\tilde{R}_k\|.
	\end{align*} 
	
	Combining this with the fact that $E_1\subset E_2 \subset \ldots$ and $\cup_{n=1}^{\infty}
	E_n$ is dense in $\tilde{\HH}$, 
	we can choose $j$ large enough so that
	\[ \|P_{m_{j}}(\sum_{k=1}^{m_1}a_k\tilde{R}_k)\|\ge \frac{1}{2}\|\sum_{k=1}^{m_1}a_k\tilde{R_k}\|,\]
	and 
	$$\|\sum_{k=1}^{m_1}a_k\tilde{R}_k-\sum_{k=m_1+1}^{m_j}a_k\tilde{R}_k\|\geq 4.$$
	Since 
	\begin{align*}
	\|P_{m_j}(\sum_{k=1}^{m_1}a_k\tilde{R}_k)&+P_{m_j}(\sum_{k=m_1+1}^{m_j}a_k\tilde{R}_k)\|^2+
	\|P_{m_j}(\sum_{k=1}^{m_1}a_k\tilde{R}_k)-P_{m_j}(\sum_{k=m_1+1}^{m_j}a_i\tilde{R}_k)\|^2\\
	&= 2\left (\|P_{m_j}(\sum_{k=1}^{m_1}a_k\tilde{R}_k)\|^2+\|P_{m_j}(\sum_{k=m_1+1}^{m_j}a_k\tilde{R_k})\|^2\right )\\
	&\ge 2\|P_{m_j}(\sum_{k=1}^{m_1}a_k\tilde{R}_k)\|^2,
	\end{align*} 
	we can choose $b_i=a_i$ for $i=1,\ldots, m_1$ and $b_i\in \{a_i, -a_i\}$ for $i=m_1+1,\ldots, m_j$ so that 
	$$\|P_{m_j}(\sum_{k=1}^{m_j}b_k\tilde{R}_k)\|\ge \|P_{m_j}(\sum_{k=1}^{m_1}b_k\tilde{R}_k)\|\ge\dfrac{1}{2}\|\sum_{k=1}^{m_1}b_k\tilde{R_k}\|\ge 1 \mbox{ and } \|\sum_{k=1}^{m_j}b_k\tilde{R}_k\|\geq 4.$$
	Setting $n_1=m_j$, 
	$$\|P_{n_1}(\sum_{k=1}^{n_1}b_k\tilde{R}_k)\|\ge 1 \mbox{ and } \|\sum_{k=1}^{n_1}b_k\tilde{R}_k\|\geq 4.$$
	
	Now for $m_j$ above, by the same argument, there is $m_l>m_j$ and $b_i\in \{a_i, -a_i\}$ for $ i=m_{j}+1,\ldots ,m_l$ so that 
	$$\|P_{m_l}(\sum_{k=1}^{m_l}b_k\tilde{R}_k)\|\geq \|P_{m_l}(\sum_{k=1}^{m_j}b_k\tilde{R}_k)\|\geq\dfrac{1}{2} \|\sum_{k=1}^{m_j}b_k\tilde{R}_k\|\geq 2$$ and 
	$$\|\sum_{k=1}^{m_l}b_k\tilde{R}_k\|\geq 6.$$
	Set $n_2=m_l$ we get 
	$$\|P_{n_2}(\sum_{k=1}^{n_2}b_k\tilde{R}_k)\|\ge 2.$$
	Continuing this process inductively, the result follows.
	
	\vskip10pt
	\noindent {\bf Step III}: For all vectors $\{ a_k\}_{k=1}^{\infty}\in \ell_2$, $\underset{n}{\sup}\|\sum_{k=1}^na_k\tilde{R_k}\|$ is finite.
	\vskip10pt
	Suppose by contradiction that there is a vector $\{ a_k\}_{k=1}^{\infty}\in \ell_2$ so that $\underset{n}{\sup}\|\sum_{k=1}^na_k\tilde{R_k}\|=\infty$. Let $\{b_k\}_{k=1}^\infty$ be the vector in Step II, then there exists a vector
	$\tilde{T}\in \tilde{\HH}$ so that $\langle \tilde{T},\tilde{x}_k\rangle = b_k$, for all
	$k=1,2,\ldots$.  It follows that $$P_{n_j}\tilde{T}= 
	P_{n_j}(\sum_{k=1}^{n_j}b_k\tilde{R}_k)$$ for all
	$j=1,2,\ldots$.  Hence, 
	\[ \infty = \sup_{j}\|P_{n_j}(\sum_{k=1}^{n_j}b_k\tilde{R}_{k})\|=\sup_{j}\|P_{n_j}\tilde{T}\|\le \|\tilde{T}\|,\]
	which is a contradiction.
	
	\vskip10pt
	\noindent {\bf Step IV}: $\{ \tilde{R}_k\}_{k=1}^{\infty}$ is a Bessel sequence in $\tilde{\HH}$. 
	\vskip10pt

	For each $n\in \NN$, define an operator
	\begin{align*} T_n: \ell_2 &\longrightarrow \tilde{\HH}\\
	x=(a_1, a_2, \ldots)&\longmapsto  T_n(x)=\sum_{k=1}^{n}a_k\tilde{R}_k
	\end{align*}
	Then $T_n$ is a bounded linear operator for all $n$. 
	
	By Step III, $\underset{n}{\sup}\|\sum_{k=1}^na_k\tilde{R_k}\|$ is finite for all $x=\{a_k\}_{k=1}^\infty$.  
	By the Uniform Boundedness Principle, $\underset{n}{\sup}\|T_n\|\le B$, for some $B>0$. For any $n, m \in \NN, m>n$, we have
	$$\|\sum_{k=n+1}^m{a_k\tilde{R}_k}\|^2=\|T_m(\sum_{k=n+1}^{m}a_ke_k)\|^2\leq B^2\sum_{k=n+1}^{m}a_k^2.$$ 
	It follows that $\sum_{k=1}^{\infty}a_k\tilde{R}_k$ converges, and hence $\{\tilde{R}_k\}_{k=1}^\infty$ is Bessel.
	
	\vskip10pt
	\noindent {\bf Step V}:  We arrive at a contradiction.
	\vskip10pt
	
	We have shown that under our assumption, $\{\tilde{R}_k\}_{k=1}^{\infty}$ is $B^2$-Bessel
	for some $B$.  Now
	choose any $a=\{a_k\}_{k=1}^{\infty}\in \ell_2$.  We have that
	\[ \|\sum_{k=1}^{\infty}a_k\tilde{R}_k\|^2\le B^2 \sum_{k=1}^{\infty}
	a_k^2.\]
	By Theorem \ref{thm103}, $\sum_{k=1}^{\infty}a_k\tilde{x}_k$ converges. Now, we have 
	\begin{align*}
	\|\sum_{k=1}^{\infty}a_k\tilde{x}_k\| &= \sup_{\|x\|\le 1}
	|\langle x,\sum_{k=1}^\infty a_k\tilde{x}_k\rangle|\\
	&\ge \frac{1}{B\|a\|}|\langle \sum_{k=1}^\infty a_k\tilde{R}_k,\sum_{l=1}^\infty
	a_l\tilde{x}_l\rangle|\\
	&=\frac{1}{B\|a\|} |\sum_{k,l=1}^\infty a_ka_l\langle \tilde{R}_k,\tilde{x}_l\rangle|\\
	&= \frac{1}{B}\|a\|.
	\end{align*}
	It follows that $\{\tilde{x}_k\}_{k=1}^{\infty}$ has
	a positive lower Riesz bound and since this family is
	injective, it is a Riesz basis.
	Hence by Theorem \ref{thm103}, it is a frame sequence. But then by Corollary \ref{Coro10}, $\{x_k\}_{k=1}^\infty$ cannot be injective, a contradiction. The proof of our theorem is now complete.
\end{proof}

\begin{remark}
As in the finite dimensional case, it is often the case that the
state estimation problem is not solvable.  But again there is a
natural way to get a good estimation to the solution.  Given
a frame $\{x_k\}_{k=1}^{\infty}$ and $\{a_k\}_{k=1}^{\infty}\in\ell_2$, choose $m$ so that $\sum_{k=m+1}^{\infty}a_k^2\le \epsilon$.
Then apply the argument in the finite case to get the best solution
for $\{a_k\}_{k=1}^m$.
\end{remark}


\begin{thebibliography}{WW}
\bibitem{BK}  J.J. Benedetto and A. Kebo, {\it The role of frame
force in quantum detection}, Journal of Fourier Analysis and
Applications {\bf 14} (2008) 443-474.

\bibitem{BH}  B. Bodmann and J. Haas, {\it A short history of frames
and quantum designs}, Preprint.  arXiv:1709.01958.

\bibitem{CaK}  P.G. Casazza and N.J. Kalton, {\it Generalizing the
Paley-Wiener perturbation theory for Banach spaces}, Proceedings of
the AMS {\bf vol. 127 No. 2} (1999) p. 519-527.

\bibitem{CK}  P.G. Casazza and G. Kutyniok, Editors {\it Finite Frames: Theory and Applications}, Birkhauser, Boston (2012).

\bibitem{C}  P.G. Casazza and M. Leon, {\it Existence and Construction of finite frames with a given frame operator}, 
International Journal of Pure and Applied Mathematics, Vol.
{\bf 63} No. 2 (2010) 149-158.

\bibitem{C1}  P.G. Casazza and R. Lynch {\it A brief introduction to Hilbert space frame theory and
its applications}, Proceedings of Symposia in Applied Mathematics - Finite Frame Theory; AMS Short Course 2015, K. Okoudjou, Ed. {\bf 73} (2016) 1-51.arxiv:  1509.07347

\bibitem{CPT}  P.G. Casazza, E. Pinkham, and B. Toumanen,
{\it Riesz outer product Hilbert space frames:  quantitative bounds, topological properties and full geometric characterization}, Jour. Math Anal and Appls, {\bf 441} No. 1 (2016) 475-498. arxiv:  1410.7755.
http://dx.doi.org/10.1016/j.jmaa.2016.04.001

\bibitem{Ch}  O. Christensen, {\it Frames and Riesz bases}, Birkhauser, Boston, (2016).

\bibitem{Eld3} Y.C. Eldar and H. Bolcskei, {\it Geometrically uniform  
frames}, IEEE Transactions on Information Theory {\bf 49} (4) (2003)
993-1006.

\bibitem{Eld}  Y.C. Eldar, and G.D. Forney, Je. {\it Optimal tight frames and quantum measurement}, IEEE Tranactions on Information
Theory, {\bf 48} No. 3, (2002) 599-610.

\bibitem{Eld2}  Y.C. Eldar, {\it Von Neumann measurement is optimal
for detecting linearly independent 0 quantum states}, Phys. Rev.
A (3) {\bf 68} (5) (2003).



\bibitem{HL}  D. han, D.R. Larson, B. Liu, and R. Lin, {\it operator-valued measures, dilatons, and the theory of frames}, Memoirs of
AMS, {\bf 229} No. 1075, (2013).

\bibitem{HW}  C.W. Hauladen and W.K. Wooters, {\it A "pretty good"
measurement for distinguishing quantum states}, J. Modern Opt.
{\bf 41} (12) (1994) 2385-2390.

\bibitem{H}  C.W. Helstrom, {\it Quantum detection and estimation
theory}, J. Statist. Phys. {\bf 1} (1969) 231-252.

\bibitem {Coc}  B. Moran, S. Howard, and D. Cochran, {\it Positive-operator-valued measures:  A general setting for frames}, Excursions in Harmonic Analysis book Series, {\bf 2} (2012) 49-64.

\bibitem{P}  A. Peres and D.R. Terno, {\it Optimal distinction
between non-orthogonal quantum states}, J. Phys. A {\bf 31}
(34) (1998) 7105-7111.

\bibitem{S}  A.J. Scott, {\it Tight informationally complete
quantum measurements}, J. Math Physics {\bf 39} No. 42 (2006)
13507-13530.

\bibitem{Y}  H.P. Yuen, R.S. Kennedy, and M. Lax, {\it Optimum testing of multiple hypotheses in quantum detection theory},
IEEE Transactions on Information Theory {\bf IT-21} (1975) 125-134.`
\end{thebibliography}
\end{document}